\numberwithin{equation}{section}
\numberwithin{figure}{section}
\theoremstyle{plain}
\newtheorem{thm}{\protect\theoremname}
  \theoremstyle{plain}
  \newtheorem{cor}[thm]{\protect\corollaryname}
  \theoremstyle{definition}
  \newtheorem{defn}[thm]{\protect\definitionname}
  \theoremstyle{plain}
  \newtheorem{prop}[thm]{\protect\propositionname}
  \theoremstyle{plain}
  \newtheorem{lem}[thm]{\protect\lemmaname}
  \theoremstyle{remark}
  \newtheorem{rem}[thm]{\protect\remarkname}
  \theoremstyle{definition}
  \newtheorem{example}[thm]{\protect\examplename}
  \providecommand{\corollaryname}{Corollary}
  \providecommand{\definitionname}{Definition}
  \providecommand{\examplename}{Example}
  \providecommand{\lemmaname}{Lemma}
  \providecommand{\propositionname}{Proposition}
  \providecommand{\remarkname}{Remark}
\providecommand{\theoremname}{Theorem}
\begin{document}

\title{Uniqueness of signature for simple curves}

\author{H. Boedihardjo, H. Ni and Z. Qian}

\address{Oxford-Man Institute of Quantitative Finance and Mathematical Institute,
Oxford.}
\begin{abstract}
We propose a topological approach to the problem of determining a
curve from its iterated integrals. In particular, we prove that a
family of terms in the signature series of a two dimensional closed
curve with finite $p$ variation, $1\leq p<2$, are in fact moments
of its winding number. This relation allows us to prove that the signature
series of a class of simple non-smooth curves uniquely determine the
curves. This implies that outside a Chordal SLE$_{\kappa}$ null set,
where $0<\kappa\leq4$, the signature series of curves uniquely determine
the curves. Our calculations also enable us to express the Fourier
transform of the $n$-point functions of SLE curves in terms of the
expected signature of SLE curves. Although the techniques used in
this article are deterministic, the results provide a platform for
studying SLE curves through the signatures of their sample paths. 
\end{abstract}
\maketitle
\textit{\small{}Keywords}{\small{}:}{\footnotesize{} Rough path theory;
Uniqueness of signature problem; SLE curves. }{\footnotesize \par}

\section{Introduction }

The signature of a path is a formal series of its iterated integrals.
In \cite{Chen54}, K.T. Chen observed that the map that sends a path
to its signature forms a homomorphism from the concatenation algebra
to the tensor algebra and used it to study the cohomology of loop
spaces. Recent interest in the study of signature has been sparked
by its role in the rough path theory. In particular, it was shown
by Hambly and Lyons in \cite{tree like} that for ODEs driven by paths
with bounded total variations, the signature is a fundamental representation
of the effect of the driving signal on the solution.

This article has two purposes:

1. To determine the winding number of a curve from its signature.

2. To prove, using a relation obtained from answering 1., that the
signature of sufficiently regular planar simple curves uniquely determines
the curves. 

The first question was originally considered as far back as 1936,
in a paper by Rado\cite{Greens p equal one}, who observed that the
second term of the signature series of a smooth path is equal to the
integral of its winding number around $\left(x,y\right)$, considered
as a function of $\left(x,y\right)$. In \cite{Yamthesis}, Yam considered
the same problem as ours, but used a different approach. He started
with the formula 
\[
\mbox{Winding number around }z=\frac{1}{2\pi i}\int_{\gamma}\frac{1}{w-z}\mathrm{d}w.
\]
and smoothened the kernel $w\rightarrow\frac{1}{w-z}$ around the
singularity at $w=z$. He then expanded $\frac{1}{w-z}$ into a power
series of $w$ and used the fact that the line integrals along $\gamma$
of polynomials in $w$ can be expressed in terms of the signature
of $\gamma$. 

Here we took a different approach and obtained a formula for the Fourier
transform of the winding number, which appears to be simpler than
the formula for the winding number itself. A classical result about
iterated integrals, first proved by Chen \cite{Chen Lie series },
states that the logarithm of the signature of any path is a Lie series.
The first result of this article states that the coefficients of some
Lyndon basis elements in the log signature series are in fact moments
of the winding number. In what follows, we will use some basic notions
in free Lie algebra, which we shall recall in \foreignlanguage{british}{section
3. Throughout this article, we will use $\pi_{N}$ to denote the projection
of $T\left(\left(\mathbb{R}^{d}\right)\right)$ to $T^{N}\left(\mathbb{R}^{d}\right)$
(see section 2.1) and $S\left(\gamma\right)_{0,1}$ to denote the
signature of $\gamma$. }
\selectlanguage{british}%
\begin{thm}
\label{thm:winding 1}Let $1\leq p<2$. Let $\gamma:\left[0,1\right]\rightarrow\mathbb{R}^{2}$
be a continuous closed curve with finite $p$ variation. Let $\left\{ \mathbf{e}_{1},\mathbf{e}_{2}\right\} $
denote the standard basis of $\mathbb{R}^{2}$. Define an order on
$\left\{ \mathbf{e}_{1},\mathbf{e}_{2}\right\} $ by $\mathbf{e}_{1}<\mathbf{e}_{2}$.
Then 

1. For each $\left(n,k\right)\in\mathbb{N}\times\mathbb{N}$, $\mathbf{e}_{1}^{\otimes n}\otimes\mathbf{e}_{2}^{\otimes k}$
is a Lyndon word in the free Lie algebra generated by $\left\{ \mathbf{e}_{1},\mathbf{e}_{2}\right\} $
with respect to the tensor product. 

2. For each $n,k\in\mathbb{N}\cup\left\{ 0\right\} \times\mathbb{N}\cup\left\{ 0\right\} $,
let $\mathcal{P}_{\mathbf{e}_{1}^{\otimes\left(n+1\right)}\otimes\mathbf{e}_{2}^{\otimes\left(k+1\right)}}$
be the Lyndon element corresponding to the Lyndon word $\mathbf{e}_{1}^{\otimes\left(n+1\right)}\otimes\mathbf{e}_{2}^{\otimes\left(k+1\right)}$.
Then, for all $n,k\in\mathbb{N}\cup\left\{ 0\right\} \times\mathbb{N}\cup\left\{ 0\right\} $,
$N\geq n+k+2$, the coefficient of $\mathcal{P}_{\mathbf{e}_{1}^{\otimes n+1}\otimes\mathbf{e}_{2}^{\otimes k+1}}$
in the Lyndon basis expansion of the truncated log signature $\pi_{N}$$\left(\log S\left(\gamma\right)\right)$
is 

\begin{eqnarray}
 &  & \left(-1\right)^{k}\int_{\mathbb{R}^{2}}\frac{x^{n}y^{k}}{n!k!}\eta\left(\gamma-\gamma_{0},\left(x,y\right)\right)\mathrm{d}x\mathrm{d}y.\label{eq:winding 1}
\end{eqnarray}
where $\eta\left(\gamma-\gamma_{0},\left(x,y\right)\right)$ is the
winding number of the curve $\gamma-\gamma_{0}$ around the points
$x\mathbf{e}_{1}+y\mathbf{e}_{2}$. 
\end{thm}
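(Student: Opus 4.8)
The plan is to handle the two parts separately, with Part 2 splitting into a geometric computation and a purely algebraic identity. Part 1 is a direct check of the Lyndon condition: reading $\mathbf{e}_1^{\otimes n}\otimes\mathbf{e}_2^{\otimes k}$ as the string $\mathbf{e}_1\cdots\mathbf{e}_1\mathbf{e}_2\cdots\mathbf{e}_2$, every proper suffix is either $\mathbf{e}_2^{\otimes j}$, which begins with $\mathbf{e}_2>\mathbf{e}_1$ and so is lexicographically larger than the whole word, or $\mathbf{e}_1^{\otimes j}\otimes\mathbf{e}_2^{\otimes k}$ with $j<n$, which agrees with the word in its first $j$ letters and then carries $\mathbf{e}_2$ where the word carries $\mathbf{e}_1$, hence is again larger; thus the word is strictly smaller than each proper suffix. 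For Part 2 I would first replace $\gamma$ by the loop $\gamma-\gamma_0$, which leaves $S(\gamma)_{0,1}$ unchanged and matches the winding number on the right of \eqref{eq:winding 1}, so that we may assume $\gamma$ is based at the origin. Closedness then gives the crucial normalisation $\langle S(\gamma)_{0,1},\mathbf{e}_1\rangle=\oint\mathrm{d}X=0$ and $\langle S(\gamma)_{0,1},\mathbf{e}_2\rangle=\oint\mathrm{d}Y=0$, where $\langle T,u\rangle$ denotes the coefficient of the word $u$ in a tensor $T$ and $X,Y$ are the components of $\gamma$.

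The geometric input is the interpretation of the winding number as a current: for smooth $P,Q$ the generalised Green formula reads $\int_{\mathbb{R}^2}\eta(\gamma,z)\,(\partial_xQ-\partial_yP)(z)\,\mathrm{d}z=\oint_\gamma(P\,\mathrm{d}x+Q\,\mathrm{d}y)$. Taking $P=0$ and $Q=X^{n+1}Y^{k}/(n+1)$ gives $\int x^ny^k\eta\,\mathrm{d}x\mathrm{d}y=\frac1{n+1}\oint X^{n+1}Y^{k}\,\mathrm{d}Y$. Now $X_t^{n+1}/(n+1)!$ and $Y_t^k/k!$ are the order-$(n+1)$ and order-$k$ iterated integrals of the one-forms $\mathrm{d}X$ and $\mathrm{d}Y$; by the shuffle relation their product equals $\langle S(\gamma)_{0,t},\,\mathbf{e}_1^{\otimes(n+1)}\ \mathrm{SH}\ \mathbf{e}_2^{\otimes k}\rangle$, where $\mathrm{SH}$ is the shuffle product, and a further integration against $\mathrm{d}Y_t$ appends a terminal $\mathbf{e}_2$. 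Writing $U_{n,k}=(\mathbf{e}_1^{\otimes(n+1)}\ \mathrm{SH}\ \mathbf{e}_2^{\otimes k})\otimes\mathbf{e}_2$, the sum of all words with $n+1$ letters $\mathbf{e}_1$ and $k+1$ letters $\mathbf{e}_2$ ending in $\mathbf{e}_2$, and collecting the factorials, one obtains $\frac1{n!\,k!}\int_{\mathbb{R}^2}x^ny^k\eta\,\mathrm{d}x\mathrm{d}y=\langle S(\gamma)_{0,1},U_{n,k}\rangle$, so the right-hand side of \eqref{eq:winding 1} equals $(-1)^k\langle S(\gamma)_{0,1},U_{n,k}\rangle$.

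It remains to prove the algebraic identity that the coefficient of $\mathcal{P}_{\mathbf{e}_1^{\otimes(n+1)}\otimes\mathbf{e}_2^{\otimes(k+1)}}$ in $\log S$ equals $(-1)^k\langle S,U_{n,k}\rangle$ for every group-like $S$ whose first level vanishes, and this is the step I expect to be the main obstacle. By Chen's theorem each homogeneous component of $\log S$ is a Lie element, so the sought coefficient is a fixed linear functional of $S$, obtained by pairing against the shuffle-dual of the Lyndon PBW basis; triangularity of $\mathcal{P}_w$ in the lexicographic order reduces it to the coefficients $\langle\log S,u\rangle$ with $u\geq w$, which are themselves expressed through $S$ by the adjoint of the first Eulerian idempotent. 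The decisive simplification is that, since $\langle S,\mathbf{e}_i\rangle=0$, the functional $\langle S,\cdot\rangle$ annihilates the shuffle ideal $I$ generated by $\mathbf{e}_1,\mathbf{e}_2$ (because $\langle S,\mathbf{e}_i\ \mathrm{SH}\ v\rangle=\langle S,\mathbf{e}_i\rangle\langle S,v\rangle=0$). The identity then reduces to a finite, $S$-independent congruence in the shuffle algebra modulo $I$, namely that the Eulerian expression for the $\mathcal{P}_w$-coordinate is congruent to $(-1)^kU_{n,k}$ mod $I$; I would prove this by induction on $n+k$, driven by the standard factorisation $\mathcal{P}_{\mathbf{e}_1^{\otimes a}\otimes\mathbf{e}_2^{\otimes b}}=[\mathbf{e}_1,\mathcal{P}_{\mathbf{e}_1^{\otimes(a-1)}\otimes\mathbf{e}_2^{\otimes b}}]$. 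The low-degree cases are transparent, since vanishing first level forces $\log S$ and $S$ to agree in tensor degrees two and three, where the identity follows at once from the triangular word-expansion of $\mathcal{P}_w$.

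Finally, since $\gamma$ has only finite $p$-variation with $1\le p<2$, every iterated integral above is a Young integral and the generalised Green formula must be justified in this low-regularity setting. I would argue by approximation: choose bounded-variation (for instance piecewise-linear) loops $\gamma^{(m)}\to\gamma$ in $p$-variation, for which both the Green formula and the iterated-integral expansion are classical, and pass to the limit using continuity of the signature in $p$-variation on one side and convergence of the winding-number moments on the other. The approximants can be kept in a common compact set and $\eta(\gamma^{(m)},\cdot)\to\eta(\gamma,\cdot)$ almost everywhere, so dominated convergence controls the integrals $\int x^ny^k\eta\,\mathrm{d}x\mathrm{d}y$, closing the argument.
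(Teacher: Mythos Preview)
Your proof outline is broadly sound, but it diverges from the paper's argument at the geometric step in a way that makes the algebraic step considerably harder than it needs to be, and you do not actually carry that harder step out.

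In the paper (Lemma~\ref{thm:main1}) the winding moment is identified with the coefficient of the \emph{single} word $\mathbf{e}_{1}^{\otimes(n+1)}\otimes\mathbf{e}_{2}^{\otimes(k+1)}$ in $S(\gamma)_{0,1}$: one writes the simplex integral, collapses the first $n+1$ variables to $X_{s}^{n+1}/(n+1)!$, uses Fubini to collapse the last $k$ variables to $(Y_{1}-Y_{s})^{k}/k!=(-Y_{s})^{k}/k!$, and only then applies Green's theorem to the resulting single line integral. Your route via Green plus shuffle instead produces the linear combination $U_{n,k}=(\mathbf{e}_{1}^{\otimes(n+1)}\ \mathrm{SH}\ \mathbf{e}_{2}^{\otimes k})\otimes\mathbf{e}_{2}$, a sum of $\binom{n+k+1}{k}$ distinct words. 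Both expressions equal the winding moment, so your identity is true, but the passage from a \emph{single} word coefficient to the Lyndon-basis coefficient is almost immediate: since the first level of $\log S$ vanishes and any concatenation factorisation of $\mathbf{e}_{1}^{\otimes(n+1)}\otimes\mathbf{e}_{2}^{\otimes(k+1)}$ contains a block $\mathbf{e}_{i}^{\otimes l}$, one has $\langle S,w\rangle=\langle\log S,w\rangle$; then triangularity of $\mathcal{P}_{\mathbf{h}}$ together with the fact that $w$ is the lexicographically smallest word with its multidegree finishes the argument in two lines. By contrast, matching your $(-1)^{k}U_{n,k}$ to the dual-Lyndon functional modulo the shuffle ideal is a genuine computation that you only sketch; the proposed induction on $n+k$ via $[\mathbf{e}_{1},\mathcal{P}_{\mathbf{e}_{1}^{\otimes(a-1)}\otimes\mathbf{e}_{2}^{\otimes b}}]$ is plausible but is exactly the step you flag as ``the main obstacle'' and leave undone.

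There is also a small gap in your approximation argument. Pointwise (a.e.) convergence $\eta(\gamma^{(m)},\cdot)\to\eta(\gamma,\cdot)$ together with a common compact support is not enough for dominated convergence of $\int x^{n}y^{k}\eta$, since $\eta(\gamma^{(m)},\cdot)$ need not be uniformly bounded. The paper handles this by first proving a uniform $L^{q}$ bound $\|\eta(\gamma,\cdot)\|_{L^{q}}\le C_{p,q}\max(\|\gamma\|_{p},\|\gamma\|_{p}^{p})$ for $q<2/p$ (Lemma~\ref{lem:Winding integrability}), valid for the piecewise-linear interpolants with a constant depending only on $\|\gamma\|_{p}$; this furnishes the uniform integrability needed to pass to the limit.
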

\selectlanguage{english}%
As the winding number of a path does not contain information about
the order at which it passes through points, whereas signature does,
we cannot expect that the signature of a path can be expressed in
terms of just winding numbers. In particular, let $a$ and $b$ be
two closed curves in $\mathbb{R}^{2}$, both starting at $0$ and
let $\star$ denote the concatenation operation between two paths.
Then $a\star b$ and $b\star a$ have the same winding number around
any point, but in general do not have the same signature. Nevertheless,
it is natural to ask how many terms in the signature series of a path
can be represented in terms of its winding numbers. The answer is
that the first four terms of a closed curve's signature can be expressed
in terms of its winding number. 
\selectlanguage{british}%
\begin{cor}
\label{cor:winding 2}Let $1\leq p<2$. Let $\gamma:\left[0,1\right]\rightarrow\mathbb{R}^{2}$
be a continuous closed curve with finite $p$ variation. The first
four terms of $\log\left(S\left(\gamma\right)_{0,1}\right)$ can be
expressed in terms of the function $\left(x,y\right)\rightarrow\eta\left(\gamma-\gamma_{0},\left(x,y\right)\right)$
alone. 
\end{cor}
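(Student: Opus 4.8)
The plan is to read the statement off directly from Theorem \ref{thm:winding 1}, combined with the classical fact (recalled above, due to Chen) that $\log S\left(\gamma\right)_{0,1}$ is a Lie series. Consequently each homogeneous component $\pi_{n}\left(\log S\left(\gamma\right)_{0,1}\right)-\pi_{n-1}\left(\log S\left(\gamma\right)_{0,1}\right)$ expands uniquely in the Lyndon basis of the degree-$n$ part of the free Lie algebra on $\left\{ \mathbf{e}_{1},\mathbf{e}_{2}\right\}$. Since these Lyndon basis elements are universal objects that do not depend on $\gamma$, it suffices to show that \emph{every} coefficient occurring in degrees $1$ through $4$ is a functional of the winding number $\left(x,y\right)\mapsto\eta\left(\gamma-\gamma_{0},\left(x,y\right)\right)$ alone.

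First I would dispose of degree one. The degree-one part of the log signature is the increment $\gamma_{1}-\gamma_{0}$, and because $\gamma$ is closed this vanishes identically; hence the two degree-one coefficients, those of $\mathbf{e}_{1}$ and $\mathbf{e}_{2}$, are zero and are trivially determined by the winding number. The heart of the argument is then a short enumeration of the Lyndon words over the ordered alphabet $\mathbf{e}_{1}<\mathbf{e}_{2}$. In degree $2$ the only Lyndon word is $\mathbf{e}_{1}\otimes\mathbf{e}_{2}$; in degree $3$ they are $\mathbf{e}_{1}\otimes\mathbf{e}_{1}\otimes\mathbf{e}_{2}$ and $\mathbf{e}_{1}\otimes\mathbf{e}_{2}\otimes\mathbf{e}_{2}$; in degree $4$ they are $\mathbf{e}_{1}^{\otimes3}\otimes\mathbf{e}_{2}$, $\mathbf{e}_{1}^{\otimes2}\otimes\mathbf{e}_{2}^{\otimes2}$ and $\mathbf{e}_{1}\otimes\mathbf{e}_{2}^{\otimes3}$, the single-letter words $\mathbf{e}_{i}^{\otimes m}$ with $m\geq2$ being periodic and therefore never Lyndon. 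Every one of these words has the special shape $\mathbf{e}_{1}^{\otimes\left(n+1\right)}\otimes\mathbf{e}_{2}^{\otimes\left(k+1\right)}$, so Theorem \ref{thm:winding 1} applies verbatim and identifies each coefficient as the moment $\left(-1\right)^{k}\int_{\mathbb{R}^{2}}\frac{x^{n}y^{k}}{n!k!}\eta\left(\gamma-\gamma_{0},\left(x,y\right)\right)\mathrm{d}x\mathrm{d}y$, with $\left(n,k\right)$ ranging over $\left(0,0\right)$ in degree $2$, over $\left(1,0\right)$ and $\left(0,1\right)$ in degree $3$, and over $\left(2,0\right),\left(1,1\right),\left(0,2\right)$ in degree $4$. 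This displays all coefficients in degrees $1$--$4$ as functionals of the winding number, which is exactly the assertion.

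I do not expect a genuine obstacle here, since the corollary is a bookkeeping consequence of Theorem \ref{thm:winding 1}; the one point requiring care is verifying that the enumeration of Lyndon words is exhaustive in each degree. What is worth flagging, and what explains why the statement stops at four terms, is that degree $4$ is precisely the threshold: already in degree $5$ the Lyndon words $\mathbf{e}_{1}\otimes\mathbf{e}_{1}\otimes\mathbf{e}_{2}\otimes\mathbf{e}_{1}\otimes\mathbf{e}_{2}$ and $\mathbf{e}_{1}\otimes\mathbf{e}_{2}\otimes\mathbf{e}_{1}\otimes\mathbf{e}_{2}\otimes\mathbf{e}_{2}$ fail to be of the form $\mathbf{e}_{1}^{\otimes\left(n+1\right)}\otimes\mathbf{e}_{2}^{\otimes\left(k+1\right)}$, so Theorem \ref{thm:winding 1} no longer covers the whole basis and the winding number ceases to determine the component. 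This is fully consistent with the remark preceding the corollary that $a\star b$ and $b\star a$ may share all winding numbers yet differ in signature.
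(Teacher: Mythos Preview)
Your proposal is correct and follows essentially the same route as the paper: enumerate the Lyndon words of degree at most $4$ (noting the degree-one terms vanish because $\gamma$ is closed), observe that every one of them has the form $\mathbf{e}_{1}^{\otimes(n+1)}\otimes\mathbf{e}_{2}^{\otimes(k+1)}$, and invoke Theorem~\ref{thm:winding 1} to express each coefficient as a moment of the winding number. Your closing remark about the degree-$5$ obstruction is also exactly what the paper records in its sharpness discussion.
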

\selectlanguage{english}%
At the end of section three, we will prove that the number ``four''
is sharp. In other words, there are two paths $\gamma,\tilde{\gamma}$
which have the same winding number around every point, but the fifth
terms of the signature of $\gamma$ and $\tilde{\gamma}$ differ.
The reason is that all Lyndon words of degree at most $4$ generated
by $\left\{ \mathbf{e}_{1},\mathbf{e}_{2}\right\} $ are of the form
$\mathbf{e}_{1}^{\otimes n}\otimes\mathbf{e}_{2}^{\otimes k}$. On
the other hand, there is a Lyndon word of degree 5 which is not of
the form $\mathbf{e}_{1}^{\otimes n}\otimes\mathbf{e}_{2}^{\otimes k}$,
namely, $\mathbf{e}_{1}\otimes\mathbf{e}_{2}\otimes\mathbf{e}_{1}\otimes\mathbf{e}_{2}^{\otimes2}$.
This corresponds to the difficulty in expressing the iterated integral
\[
\int_{0<s<t<1}\left[\left[\gamma_{s},\mathrm{d}\gamma_{s}\right],\left[\gamma_{t},\left[\gamma_{t},\mathrm{d}\gamma_{t}\right]\right]\right]
\]
in terms of the moments of the winding number of $\gamma$.

\section*{Uniqueness of signature}

If we consider the signature as a representation of paths, then an
interesting question is whether this representation is faithful. This
was first considered by Chen himself \cite{Chen uniqueness}, who
proved that irreducible, piecewise regular continuous paths have the
same signature if and only if they are equal up to a translation and
a reparametrisation. His result was generalised with a new, quantitative
approach by Hambly and Lyons in \cite{tree like} who showed that
two paths $\gamma$ and $\tilde{\gamma}$ with finite total variations
have the same signature if and only if $\gamma$ can be expressed
as the concatenation of $\tilde{\gamma}$ with a ``tree-like'' path
$\sigma$.
\begin{thm}
\label{thm:jordan1}Let $1\leq p<2$. Let $\gamma$, $\tilde{\gamma}$
be simple curves with finite $p$ variation in $\mathbb{R}^{2}$.
Then $S\left(\gamma\right)_{0,1}=S\left(\tilde{\gamma}\right)_{0,1}$
if and only if $\gamma$ and $\tilde{\gamma}$ are equal up to a translation
and a reparametrisation.
\end{thm}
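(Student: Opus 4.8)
The plan is to dispose of the easy implication and then reduce the hard one to a statement about winding numbers. The ``only if'' direction is immediate from the two basic invariances of the signature: it is unchanged under translating the curve (the iterated integrals depend only on the increments $\mathrm{d}\gamma$) and under any increasing reparametrisation. Hence if $\gamma$ and $\tilde{\gamma}$ agree up to a translation and a reparametrisation, then $S\left(\gamma\right)_{0,1}=S\left(\tilde{\gamma}\right)_{0,1}$.

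For the ``if'' direction, assume $S\left(\gamma\right)_{0,1}=S\left(\tilde{\gamma}\right)_{0,1}$. Translating $\tilde{\gamma}$ (which does not change its signature) I may assume $\gamma_{0}=\tilde{\gamma}_{0}$, and reading off the first level of the signature gives $\gamma_{1}-\gamma_{0}=\tilde{\gamma}_{1}-\tilde{\gamma}_{0}$, so the two curves share both endpoints, say $A=\gamma_{0}=\tilde{\gamma}_{0}$ and $B=\gamma_{1}=\tilde{\gamma}_{1}$. The key device is the difference loop $\Gamma:=\gamma\star\overleftarrow{\tilde{\gamma}}$, where $\overleftarrow{\tilde{\gamma}}$ is $\tilde{\gamma}$ run backwards; since the endpoints match, $\Gamma$ is a continuous closed curve of finite $p$ variation, so Theorem \ref{thm:winding 1} applies to it. By Chen's identity together with the fact that reversal inverts the signature, $S\left(\Gamma\right)_{0,1}=S\left(\gamma\right)_{0,1}\otimes S\left(\tilde{\gamma}\right)_{0,1}^{-1}=\mathbf{1}$, so $\log S\left(\Gamma\right)$ vanishes and every Lyndon coefficient is zero. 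In particular the coefficients of the words $\mathbf{e}_{1}^{\otimes(n+1)}\otimes\mathbf{e}_{2}^{\otimes(k+1)}$ vanish, whence Theorem \ref{thm:winding 1} forces all moments $\int_{\mathbb{R}^{2}}x^{n}y^{k}\,\eta\left(\Gamma,\left(x,y\right)\right)\,\mathrm{d}x\,\mathrm{d}y$ to be zero. As $\eta\left(\Gamma,\cdot\right)$ is bounded and compactly supported, hence in $L^{1}$, vanishing of all its moments forces $\eta\left(\Gamma,\cdot\right)=0$ almost everywhere by density of polynomials on a ball containing the support; and since the winding number is locally constant off the trace, in fact $\eta\left(\Gamma,z\right)=0$ for every $z$ off the image of $\Gamma$.

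It then remains the geometric core: two simple arcs from $A$ to $B$ whose difference loop has winding number identically zero must have the same image. I would argue by contradiction. If the images differ, pick a point $z$ in, say, $\mathrm{image}\left(\gamma\right)\setminus\mathrm{image}\left(\tilde{\gamma}\right)$; since the endpoints are common, $z$ is an interior point of $\gamma$ and, $\gamma$ being simple, is traversed exactly once by $\Gamma$. Choosing a small disk $D$ about $z$ missing the compact set $\mathrm{image}\left(\tilde{\gamma}\right)$ and the endpoints, the set $D\cap\Gamma$ is a single embedded sub-arc through $z$. Because every arc in the plane is tame (planar Schoenflies), this sub-arc splits $D$ into two components, and the winding numbers of $\Gamma$ on the two sides differ by exactly $1$; they cannot both be $0$, contradicting $\eta\left(\Gamma,\cdot\right)\equiv0$. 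Hence $\mathrm{image}\left(\gamma\right)=\mathrm{image}\left(\tilde{\gamma}\right)$, and two simple curves with the same image and the same ordered endpoints (the shared orientation being forced by equality of signatures in the closed case) coincide up to an increasing reparametrisation.

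I expect this geometric lemma to be the main obstacle: the passage from ``all winding-number moments vanish'' to ``the traces agree'' is where the planar topology must be handled with care. The delicate points are the determinacy of the moment problem for $\eta\left(\Gamma,\cdot\right)$, legitimate only because this function is compactly supported and integrable (which in turn uses $p<2$), and the local separation statement that a single strand of $\Gamma$ splits a small disk into two regions across which the winding number jumps by one. The latter is clean for tame arcs and tameness is automatic in the plane, but one must ensure that near the chosen point $z$ the loop $\Gamma$ is genuinely a single embedded strand, i.e. that $z$ lies on only one of the two arcs; this is exactly what choosing $z$ in the symmetric difference of the images secures.
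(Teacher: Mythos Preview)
Your argument follows the same architecture as the paper's: translate so the starting points agree, form the difference loop $\Gamma$, use Chen's identity and reversal to get $S(\Gamma)=\mathbf{1}$, invoke Theorem~\ref{thm:winding 1} to kill all moments of $\eta(\Gamma,\cdot)$, conclude $\eta(\Gamma,\cdot)\equiv 0$, and then run a planar--topology lemma to force equal images and hence a reparametrisation. The two places where you diverge from the paper are both in the execution of sub-lemmas. First, for ``all moments vanish $\Rightarrow$ $\eta=0$'' you appeal to Stone--Weierstrass on a ball containing the support, whereas the paper (Proposition~\ref{lem:tree-like winding}) sums the moment series to obtain the Fourier transform and uses injectivity of the Fourier transform on $L^{1}$; both are valid. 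One caution: your assertion that $\eta(\Gamma,\cdot)$ is \emph{bounded} is not justified and need not hold for the concatenation of two simple arcs, but you do not actually need it---compactly supported and $L^{1}$ (which you correctly flag as requiring $p<2$, cf.\ Lemma~\ref{lem:Winding integrability}) is enough for the density argument to go through. Second, for the geometric step the paper (Lemma~\ref{lem:winding image}) picks a point $\tilde{\gamma}_{\sigma}\notin\gamma[0,1]$, isolates the maximal excursion $\tilde{\gamma}|_{[s,t]}$ off $\gamma$, and splices it with the corresponding sub-arc of $\gamma$ to build a genuine Jordan curve, then uses the Jordan curve theorem to produce two nearby points with winding numbers differing by~$1$; your version instead takes a small disk meeting $\Gamma$ in a single embedded sub-arc and invokes planar tameness/Schoenflies to get the jump of~$1$ across that strand. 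Your local argument is a clean alternative, but note that it leans on Schoenflies (or at least on the fact that an arc locally disconnects the plane), whereas the paper's construction needs only the Jordan curve theorem; conversely, your approach avoids the bookkeeping of defining $s,t,u,v$ and checking the spliced curve is simple.
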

In the case of $p=1$, we already know from the result of Hambly and
Lyons that the simple curves can be recovered from the signature (modulo
translation and reparametrisation) since simple curves have no tree-like
parts. An interesting, but difficult extension is to prove that if
the signatures of two curves with finite $p>1$ variations are equal,
then the paths are equal up to the tree-like path equivalence. The
restriction $1\leq p<2$ gives us the existence of signature for free,
thanks to Young's integration theory. 

Theorem \ref{thm:jordan1} only applies to paths with finite $p$-variations,
where $p<2$. In particular, our results can only be applied to study
stochastic processes whose sample paths are almost surely smoother
than the Brownian motion sample paths. One example of such processes
is the Chordal SLE$_{\kappa}$ measure. The SLE measures were born
from the study of lattice models which have conformally invariant
scaling limit. There are a number of other lattice models whose scaling
limit has been proved to be an SLE curve under some boundary conditions,
such as the loop erased random walk ($\kappa=2$, \cite{LSW Loop erased}),
the Ising model ($\kappa=3$, \cite{Ising}), the level lines of Gaussian
free field ($\kappa=4,$ \cite{Gaussian Free Fields}), percolation
on the triangular lattice ($\kappa=6$, \cite{Percolation scaling}
and \cite{percolation 2}), and the Peano curve of the uniform spanning
tree ($\kappa=8$, \cite{LSW Loop erased}). 

The path regularity and, in particular, the roughness of SLE curves,
in relation to the speed $\kappa$ of the driving Brownian motion,
is an extremely interesting topic. It is intuitively clear that the
SLE curves become rougher as the speed of the driving Brownian motion
increases. In \cite{holder SLE}, the optimal Hölder exponent for
SLE curves under the capacity parametrisation was proved to be
\[
\min\left(\frac{1}{2},1-\frac{\kappa}{24+2\kappa-8\sqrt{8+\kappa}}\right).
\]

In \cite{Hausdorff dimension of SLE}, V. Beffara proved that the
almost-sure Hausdorff dimension of SLE curves is $\min\left(1+\frac{\kappa}{8},2\right)$.
Therefore, the optimal Hölder exponent cannot exceed $\frac{1}{1+\frac{\kappa}{8}}$.
B. Werness\cite{Wer12} proved that for $0<\kappa\leq4$, almost surely,
the SLE curve in $\mathbb{D}$ has finite $p$ variation for any $p>1+\frac{\kappa}{8}$.
In another words, the roughness of an SLE curve grows linearly with
the speed of the driving Brownian motion. It is strongly believed
that this remains true for $4<\kappa<8$. However, to the best of
our knowledge, this problem remains open.

In \cite{Wer12}, B. Werness used his regularity result to define
the signatures of SLE curves using Young's integral. He is also the
first to realise that the Green's theorem can be used to compute some
terms in the signature of a simple curve. He used it to prove the
$n=2,k=1$ case of Lemma \ref{thm:main1} for simple closed curves
and to compute the first three gradings of the expected signature
of SLE curve. Our work is inspired by and in fact generalises Werness's
calculation. Later in Theorem \ref{thm:SLEthem2}, we shall show that
our generalisation allows us to obtain the fourth term in the expected
signature of SLE$_{\kappa}$ curves. Werness's method will not work
to calculate fifth or later terms in the expected signature of SLE
curves. This is because the fifth or later terms are not completely
determined by the path's winding number. 

In the study of SLE curves we often do not care about the curves'
parametrisations and in some cases, it may be convenient to study
the curves' signature instead. In order to do so, one must prove that
there is a $1-1$ correspondence between curves and their signatures,
outside a null set. Such injectiveness was proved for Brownian motion
by Le Jan and Qian in \cite{LQ12} and for general diffusion processes
by Geng and Qian. Both results rely on the Strong Markov property.
Although the Chordal SLE$_{\kappa}$ measure is not Markov, the inversion
problem can be tackled for $\kappa\leq4$ since the Chordal SLE$_{\kappa}$
measure is supported on simple curves. The Chordal $SLE_{\kappa}$
measure in a domain $D$ is defined as the pull-back of the Chordal
SLE$_{\kappa}$ measure in $\mathbb{H}$ via a conformal map. Although
the Chordal SLE$_{\kappa}$ measure in $\mathbb{H}$ is parametrised
on $[0,\infty)$, we know from \cite{rhode and schramm} that the
Chordal SLE$_{\kappa}$ measure in $\mathbb{H}$ is supported on curves
tending to infinity as time tends to infinity. This allows us to reparametise
SLE$_{\kappa}$ curves in a bounded Jordan domain $D$ so that it
is defined on $\left[0,\infty\right]$, or $\left[0,1\right]$, by
continuous extension. It follows from Theorem \ref{thm:jordan1} that:
\begin{thm}
\label{thm:SLEthm} Let $D$ be a bounded Dini-smooth Jordan domain
and let $a,b$ be two distinct boundary points of $D$. Let $\mathbb{P}_{\kappa,D}^{a,b}$
be the Chordal $SLE_{\kappa}$ measure in $D$ with marked points
$a$ and $b$. Then there exists a set of curves $A$, such that $\mathbb{P}_{\kappa,D}^{a,b}\left(A^{c}\right)=0$
for all $0<\kappa\leq4$ and if $\gamma,\tilde{\gamma}\in A$ and
$S\left(\gamma\right)_{0,1}=S\left(\tilde{\gamma}\right)_{0,1}$,
then $\gamma$ and $\tilde{\gamma}$ are equal up to a reparametrisation. 
\end{thm}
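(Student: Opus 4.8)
The plan is to reduce the statement to Theorem \ref{thm:jordan1} by exhibiting a single set $A$ of ``good'' curves, of full measure for the whole range $0<\kappa\le4$, on which the hypotheses of that theorem hold. Concretely, I would take $A$ to be the set of simple curves, parametrised on $[0,1]$ and starting at the point $a$, that have finite $p$-variation for some $p$ with $1\le p<2$. For such a curve the signature $S(\gamma)_{0,1}$ exists via Young integration, and Theorem \ref{thm:jordan1} determines the curve from its signature up to translation and reparametrisation. A single $A$ works for the entire range because it is defined as a union over all $p<2$; so once I show $\mathbb{P}_{\kappa,D}^{a,b}(A^{c})=0$ for each fixed $\kappa\in(0,4]$, the conclusion is almost immediate.

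The key verification is that the SLE$_{\kappa}$ curve lies in $A$ almost surely. First, for $0<\kappa\le4$ the Chordal SLE$_{\kappa}$ trace in $\mathbb{H}$ is almost surely a simple curve tending to $\infty$, by \cite{rhode and schramm}. Second, by Werness's regularity result \cite{Wer12}, almost surely this trace has finite $p$-variation for every $p>1+\frac{\kappa}{8}$; since $\kappa\le4$ we have $1+\frac{\kappa}{8}\le\frac{3}{2}$, so I may fix a $p$ with $1+\frac{\kappa}{8}<p<2$ and deduce finite $p$-variation with $p<2$. Third, I would transfer both properties to the curve in $D$ through the conformal map $\varphi:\mathbb{H}\to D$ normalised so that $\varphi(0)=a$ and $\varphi(\infty)=b$: simplicity is preserved because $\varphi$ extends, by Carath\'eodory's theorem, to a homeomorphism of the closures, while finite $p$-variation is preserved because Dini-smoothness of $\partial D$ guarantees, via the Kellogg--Warschawski boundary-regularity theorem, that $\varphi'$ extends continuously and without zeros to $\overline{\mathbb{H}}$, so that $\varphi$ is bi-Lipschitz on a neighbourhood of the trace. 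Finally, reparametrising onto $[0,1]$ by continuous extension (using that the trace tends to $b$) yields a genuine simple curve of finite $p$-variation on a compact interval, i.e.\ an element of $A$.

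With $\mathbb{P}_{\kappa,D}^{a,b}(A^{c})=0$ in hand, the uniqueness statement follows directly. If $\gamma,\tilde{\gamma}\in A$ satisfy $S(\gamma)_{0,1}=S(\tilde{\gamma})_{0,1}$, then Theorem \ref{thm:jordan1} shows they agree up to a translation and a reparametrisation, say $\gamma=\tilde{\gamma}\circ\phi+v$ with $\phi$ an increasing reparametrisation fixing the endpoints. The translation must be trivial, because every curve in $A$ starts at the common point $a$: evaluating at the start gives $a=\tilde{\gamma}(\phi(0))+v=a+v$, whence $v=0$. Therefore $\gamma$ and $\tilde{\gamma}$ are equal up to reparametrisation alone, as claimed.

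I expect the main obstacle to be the third step, namely transferring finite $p$-variation and simplicity from the half-plane picture to the bounded domain and controlling the behaviour at the two boundary endpoints $a$ and $b$, where the trace meets $\partial D$ and where the conformal map is most delicate. The Dini-smoothness hypothesis on $D$ is precisely what keeps $\varphi$ and $\varphi^{-1}$ locally bi-Lipschitz up to the boundary, so that finite $p$-variation survives the pushforward; without it the regularity could be destroyed near $a$ and $b$. A secondary, more technical point is to confirm that $A$ is a measurable set of curves, so that the null-set statement is well posed.
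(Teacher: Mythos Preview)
Your proposal is correct and follows essentially the same route as the paper: define $A$ by simplicity plus finite $p$-variation with $p<2$, verify full measure via Rohde--Schramm simplicity and Werness's regularity, transfer to $D$ using that Dini-smoothness gives bounded derivative for the conformal map, then invoke Theorem~\ref{thm:jordan1} and kill the translation using the common start point $a$.

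Two minor differences are worth noting. First, the paper fixes a single exponent $p=\frac{13}{8}$ (which exceeds $1+\frac{\kappa}{8}$ for every $\kappa\le4$) rather than taking a union over $p<2$; this makes $A$ slightly more concrete and sidesteps your measurability worry. Second, Werness's regularity statement as quoted in the paper (Theorem~\ref{prop:SLE regularity}) is for SLE in $\mathbb{D}$, not in $\mathbb{H}$; the paper therefore transfers from $\mathbb{D}$ to $D$ via a conformal map between bounded Dini-smooth Jordan domains, which is genuinely bi-Lipschitz up to the boundary. Your version, transferring directly from $\mathbb{H}$, runs into the issue that the trace in $\mathbb{H}$ is unbounded and the map $\varphi:\mathbb{H}\to D$ cannot be globally bi-Lipschitz near $\infty$; this is easily fixed by routing through $\mathbb{D}$ as the paper does, so it is not a real gap, but it is the reason your anticipated ``main obstacle'' at the boundary endpoint $b$ does not in fact arise.
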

The Dini-smooth condition was introduced to ensure the existence of
a Lipschitz conformal map from $\mathbb{D}$ to $D$. See \cite{Pommerenke92}
for a proof of this result and the definition of Dini-smooth. This
ensures that the SLE$_{\kappa}$ curves in $D$ have the same regularity
as the SLE$_{\kappa}$ curves in $\mathbb{D}$.

The expected signature can be considered as the ``Laplace transform''
of a stochastic process and has first been studied in \cite{Fawcett thesis}.
An important open problem in rough path theory is whether one can
recover a probability measure from the expected signature corresponding
to the probability measure. In fact, in the case when the measure
is a Dirac delta measure, this problem is the uniqueness of signature
problem introduced earlier. For the Chordal SLE measure, the sequence
of $n$-point functions, first studied by O. Schramm, describes the
distribution of the winding angle of the SLE curve around any $n$
given points in the interior of the domain. For $\kappa\leq4$, as
the SLE$_{\kappa}$ curve is simple, so knowing the winding angle
around every point is equivalent to knowing the image of the curve.
Therefore, the $n$-point function for all $n$ can be considered
as a parametrisation independent version of the ``finite dimensional
distribution'' of SLE curve. We prove that the Laplace transform
of the $n$ point functions, and hence the $n$ point functions themselves,
can be obtained from the expected signature of SLE curves. 
\begin{thm}
\label{thm:SLEthem2}Let $0\leq\kappa\leq4$. Let $D$ be a bounded
Dini-smooth Jordan domain and $a,b\in\partial D$. Let $\mathbb{P}_{\kappa,D}^{a,b}$
be the Chordal SLE$_{\kappa}$ measure in $D$ with marked points
$a$ and $b$. For each curve $\gamma$, let $\Phi\left(\gamma\right)$
denote the concatenation of $\gamma$ with the positively oriented
arc of $\partial D$ from $b$ to $a$. For each $N\in\mathbb{N}$,
let $\Gamma_{N}$ denote the $n$-point function associated with $\mathbb{P}_{\kappa,D}^{a,b}$,
then for all $N\geq1$ and $\lambda_{i},\mu_{i}\in\mathbb{R}$ for
$i=1,\ldots N$, 

\begin{eqnarray*}
 &  & \int_{\mathbb{R}^{2N}}e^{\sum_{i=1}^{N}\lambda_{i}x_{i}+\mu_{i}y_{i}}\Gamma_{N}\left(\left(x_{1},y_{1}\right),\ldots,\left(x_{N},y_{N}\right)\right)\mathrm{d}x_{1}\cdots\mathrm{d}y_{N}\\
 & = & \sum_{n_{1},\ldots,n_{N},k_{1}\ldots k_{N}\geq0}\Pi_{i=1}^{N}\left(\lambda_{i}\right)^{n_{i}}\left(-\mu_{i}\right)^{k_{i}}\sqcup_{i=1}^{N}\mathbf{e}_{1}^{*\otimes\left(n_{i}+1\right)}\otimes\mathbf{e}_{2}^{*\otimes\left(k_{i}+1\right)}\left(\mathbb{E}_{\kappa,D}^{a,b}\left[S\left(\Phi\left(\cdot\right)\right)_{0,1}\right]\right)
\end{eqnarray*}
where $\mathbf{e}_{i}^{*}$ is the dual basis corresponding to the
standard basis of $\mathbb{R}^{2}$ (see section 2.1) and $\sqcup$
denotes the shuffle product (see Proposition \ref{prop:shuffle-1}).
\end{thm}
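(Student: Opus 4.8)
The plan is to evaluate the right-hand side multi-index by multi-index: push each linear functional inside the expectation, use the group-like (shuffle) property of the signature to factorise it into a product of single-word functionals, invoke Lemma \ref{thm:main1} to turn each factor into a moment of the winding number, identify the resulting expectation with the $N$-point function, and finally resum the series into the exponential on the left.

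First I would fix the regularity making every object meaningful. For $0\le\kappa\le4$ the curve $\gamma$ is almost surely simple with finite $p$-variation for some $p<2$ by Werness's result, and since $D$ is Dini-smooth the boundary arc from $b$ to $a$ is rectifiable; hence $\Phi(\gamma)$ is almost surely a simple closed curve of finite $p$-variation with $1\le p<2$, so its signature exists by Young's theory and Lemma \ref{thm:main1} applies. Crucially, as a simple closed curve contained in $\overline{D}$, its winding number $\eta(\Phi(\gamma)-\Phi(\gamma)_0,\cdot)$ takes values in $\{-1,0,1\}$ and is supported in a compact set; this uniform bound and compact support are exactly what legitimise the interchange of sum and integral at the end.

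Next, fix the multi-index $(\vec n,\vec k)=(n_1,\dots,n_N,k_1,\dots,k_N)$ and abbreviate $W_{\vec n,\vec k}=\sqcup_{i=1}^N\mathbf{e}_1^{*\otimes(n_i+1)}\otimes\mathbf{e}_2^{*\otimes(k_i+1)}$. Since $W_{\vec n,\vec k}$ is a fixed linear functional concentrated on the single tensor level $\sum_i(n_i+k_i+2)$, it is a finite linear combination of word functionals and commutes with the expectation:
\[
W_{\vec n,\vec k}\bigl(\mathbb{E}_{\kappa,D}^{a,b}[S(\Phi(\cdot))_{0,1}]\bigr)=\mathbb{E}_{\kappa,D}^{a,b}\bigl[W_{\vec n,\vec k}\bigl(S(\Phi(\gamma))_{0,1}\bigr)\bigr].
\]
By Proposition \ref{prop:shuffle-1} the signature is group-like, so the integrand factorises as $\prod_{i=1}^N\mathbf{e}_1^{*\otimes(n_i+1)}\otimes\mathbf{e}_2^{*\otimes(k_i+1)}(S(\Phi(\gamma))_{0,1})$, and Lemma \ref{thm:main1} rewrites each factor as $(-1)^{k_i}\int_{\mathbb{R}^2}\frac{x^{n_i}y^{k_i}}{n_i!\,k_i!}\eta(\Phi(\gamma)-\Phi(\gamma)_0,(x,y))\,\mathrm{d}x\,\mathrm{d}y$. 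Applying Fubini to the finite product of these moment integrals and then taking the expectation, the $(\vec n,\vec k)$ summand becomes
\[
(-1)^{\sum_i k_i}\Bigl(\prod_{i=1}^N\tfrac{1}{n_i!\,k_i!}\Bigr)\int_{\mathbb{R}^{2N}}\Bigl(\prod_{i=1}^N x_i^{n_i}y_i^{k_i}\Bigr)\mathbb{E}_{\kappa,D}^{a,b}\Bigl[\prod_{i=1}^N\eta(\Phi(\gamma)-\Phi(\gamma)_0,(x_i,y_i))\Bigr]\mathrm{d}x_1\cdots\mathrm{d}y_N,
\]
whose inner expectation is precisely the joint moment of the winding numbers, i.e.\ the $N$-point function $\Gamma_N((x_1,y_1),\dots,(x_N,y_N))$.

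It remains to multiply by $\prod_i(\lambda_i)^{n_i}(-\mu_i)^{k_i}$ and sum over all $(\vec n,\vec k)$. Using $(-\mu_i)^{k_i}(-1)^{k_i}=\mu_i^{k_i}$ and interchanging the summation with the integration against $\Gamma_N$, the series collapses, for each fixed $(x_i,y_i)$, to $\prod_{i=1}^N\bigl(\sum_{n_i\ge0}\tfrac{(\lambda_i x_i)^{n_i}}{n_i!}\bigr)\bigl(\sum_{k_i\ge0}\tfrac{(\mu_i y_i)^{k_i}}{k_i!}\bigr)=\exp\bigl(\sum_{i=1}^N(\lambda_i x_i+\mu_i y_i)\bigr)$, reproducing the left-hand side. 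The main obstacle is justifying this final interchange of the infinite sum over $(\vec n,\vec k)$ with the $2N$-fold integral. Here I would use the first paragraph: since $|\Gamma_N|\le1$ and $\Gamma_N$ has compact support, the absolute partial sums are dominated by the integrable function obtained by restricting $\prod_{i=1}^N e^{|\lambda_i x_i|+|\mu_i y_i|}$ to that support, so dominated convergence permits the interchange, simultaneously shows that the right-hand side series converges, and completes the proof.
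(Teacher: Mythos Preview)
Your proof is correct and follows essentially the same route as the paper's: apply Lemma~\ref{thm:main1} to each factor, use the shuffle product to factorise (the paper records this as Lemma~\ref{lem:lincom}), take expectation to obtain $\Gamma_N$, and resum into the exponential. You are in fact more careful than the paper about justifying the final interchange of sum and integral; the only small slip is that identifying $\mathbb{E}\bigl[\prod_i\eta(\Phi(\gamma)-\Phi(\gamma)_0,\cdot)\bigr]$ with $\Gamma_N$ requires knowing $\Phi(\gamma)$ is positively oriented (so that $\eta=1_{\mathrm{Int}\,\Phi(\gamma)}$ rather than merely $\pm1$) and accounting for the fixed translation by $\Phi(\gamma)_0=a$, points the paper's own proof also glosses over.
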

Note that if $\mathbb{E}_{\kappa,D}^{a,b}\left(S\left(\cdot\right)_{0,1}\right)$
is the expected signature of SLE curve and $\phi$ is the negatively
oriented arc from $a$ to $b$ in $\partial D$, then 
\[
\mathbb{E}_{\kappa,D}^{a,b}\left(S\left(\cdot\right)_{0,1}\right)=\mathbb{E}_{\kappa,D}^{a,b}\left[S\left(\Phi\left(\cdot\right)\right)_{0,1}\right]\otimes S\left(\phi\right)_{0,1}.
\]
 Theorem \ref{thm:SLEthem2} also allows us to obtain the fourth term
of the expected signature of the Chordal SLE$_{\kappa}$ measure for
$\kappa\leq4$ in terms of the $1$ and $2$ point functions. While
a partial differential equation can be written down for the $2$ point
function, the only case when the $2$ point function is known explicitly
is when $\kappa=\frac{8}{3}$. This allows us to obtain an expression
for the fourth term of the expected signature of SLE$_{\frac{8}{3}}$
curve as: 
\begin{thm}
\label{thm:fourth term of sle signature }The fourth term in the expected
signature of SLE$_{\frac{8}{3}}$ curve in $\frac{1}{2}\left(1+\mathbb{D}\right)$
is 
\[
\frac{\mathbf{e}_{1}^{\otimes4}}{4!}-\left(\frac{5}{96}-\frac{\mathcal{K}}{16}\right)\left[\mathbf{e}_{1},\left[\left[\mathbf{e}_{1},\mathbf{e}_{2}\right],\mathbf{e}_{2}\right]\right]-\frac{1}{8}\left(\frac{5}{6}-\mathcal{K}\right)\left[\left[\mathbf{e}_{1},\mathbf{e}_{2}\right],\mathbf{e}_{2}\right]\otimes\mathbf{e}_{1}+\left(\frac{\pi^{2}}{128}+\frac{A}{2}\right)\left[\mathbf{e}_{1},\mathbf{e}_{2}\right]\otimes\left[\mathbf{e}_{1},\mathbf{e}_{2}\right]
\]
where $\mathcal{K}$ is the Catalan constant $\sum_{i=1}^{\infty}\frac{\left(-1\right)^{k}}{\left(2k+1\right)^{2}}=0.916\ldots$
and $A$ is the quadruple integral 
\begin{equation}
\int_{0}^{\pi}\int_{0}^{\infty}\int_{0}^{\pi}\int_{0}^{\infty}\frac{r_{1}r_{2}\left[\left(1+\cos\theta_{1}\right)\left(1+\cos\theta_{2}\right)+\sin\theta_{1}\sin\theta_{2}G\left(\sigma\right)\right]\mathrm{d}r_{1}\mathrm{d}\theta_{1}\mathrm{d}r_{2}\mathrm{d}\theta_{2}}{4\left(r_{1}^{2}+2r_{1}\sin\theta_{1}+1\right)^{2}\left(r_{2}^{2}+2r_{2}\sin\theta_{2}+1\right)^{2}\left(\cos\theta_{1}+1\right)\left(\cos\theta_{2}+1\right)}\label{eq:quad integral}
\end{equation}
where 
\[
\sigma:=\frac{r_{1}^{2}+r_{2}^{2}-2r_{1}r_{2}\cos\left(\theta_{1}-\theta_{2}\right)}{r_{1}^{2}+r_{2}^{2}-2r_{1}r_{2}\cos\left(\theta_{1}+\theta_{2}\right)}
\]
is the exponential of twice the Green's function in the upper half
plane and 
\[
G\left(\sigma\right)=1-\sigma\,_{2}F_{1}\left(1,\frac{4}{3};\frac{5}{3};1-\sigma\right)
\]
where $_{2}F_{1}$ is the hypergeometric function. 
\end{thm}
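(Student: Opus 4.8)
The plan is to compute the degree-four component of $\mathbb{E}_{\kappa,D}^{a,b}[S(\cdot)_{0,1}]$ by reducing everything to moments of the winding number of the closed-up curve, and then to insert the explicitly known $1$- and $2$-point functions of $\mathrm{SLE}_{8/3}$. First I would normalise coordinates so that the marked points satisfy $b-a=\mathbf{e}_1$, and use the identity recorded immediately after Theorem \ref{thm:SLEthem2},
\[
\mathbb{E}_{\kappa,D}^{a,b}[S(\cdot)_{0,1}]=\mathbb{E}_{\kappa,D}^{a,b}[S(\Phi(\cdot))_{0,1}]\otimes S(\phi)_{0,1},
\]
to split the problem into the random closed curve $\Phi(\gamma)$ and the deterministic boundary arc $\phi$, which for this domain is a semicircle whose signature is computed directly. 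Extracting the degree-four part of a tensor product then reduces the task to the components $\pi_j\mathbb{E}[S(\Phi(\cdot))]$ for $j\le 4$ together with the levels of $S(\phi)$. Since $\Phi(\gamma)$ is closed, its first log-signature level vanishes, so by the exponential relation $\pi_2\mathbb{E}[S(\Phi)]=\mathbb{E}[L_2]$, $\pi_3\mathbb{E}[S(\Phi)]=\mathbb{E}[L_3]$ and $\pi_4\mathbb{E}[S(\Phi)]=\mathbb{E}[L_4]+\tfrac12\mathbb{E}[L_2^{\otimes 2}]$, where $L_m$ is the degree-$m$ Lie part of $\log S(\Phi)$.

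Next I would expand each $L_m$ in the Lyndon basis and invoke Theorem \ref{thm:winding 1} together with Corollary \ref{cor:winding 2}: the coefficient of $\mathcal{P}_{\mathbf{e}_1^{\otimes(n+1)}\otimes\mathbf{e}_2^{\otimes(k+1)}}$ is the winding moment $(-1)^k\int x^n y^k\,\eta/(n!k!)$. Here the reflection symmetry of $\mathrm{SLE}_{8/3}$ about the chord is decisive. Reflection in the real axis preserves the law while reversing orientation, so $\mathbb{E}[\eta(\Phi,(x,y))]$ is odd in $y$; hence a moment $\int x^n y^k\,\mathbb{E}[\eta]$ survives only when $k$ is odd. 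In degree three ($n+k=1$) this leaves only $\mathcal{P}_{\mathbf{e}_1\otimes\mathbf{e}_2^{\otimes 2}}=[[\mathbf{e}_1,\mathbf{e}_2],\mathbf{e}_2]$, and in degree four ($n+k=2$) only $\mathcal{P}_{\mathbf{e}_1^{\otimes 2}\otimes\mathbf{e}_2^{\otimes 2}}=[\mathbf{e}_1,[[\mathbf{e}_1,\mathbf{e}_2],\mathbf{e}_2]]$; the elements $\mathcal{P}_{\mathbf{e}_1^{\otimes 3}\otimes\mathbf{e}_2}$, $\mathcal{P}_{\mathbf{e}_1\otimes\mathbf{e}_2^{\otimes 3}}$ and $\mathcal{P}_{\mathbf{e}_1^{\otimes 2}\otimes\mathbf{e}_2}$ drop out. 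Tensoring the surviving degree-three element against the $\mathbf{e}_1$-level of $S(\phi)$ produces the term $[[\mathbf{e}_1,\mathbf{e}_2],\mathbf{e}_2]\otimes\mathbf{e}_1$, and $\mathbb{E}[L_4]$ supplies $[\mathbf{e}_1,[[\mathbf{e}_1,\mathbf{e}_2],\mathbf{e}_2]]$. The all-$\mathbf{e}_1$ word receives a contribution only from the arc, equal to $(\Delta\phi^{1})^4/4!=\mathbf{e}_1^{\otimes 4}/4!$, because the $\mathbf{e}_1$-displacement of the closed curve $\Phi$ is zero; this fixes the leading term. I would then present the collected expression in the Lyndon--PBW basis, which selects the particular orderings appearing in the statement.

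It remains to evaluate the surviving moments against the explicit $\mathrm{SLE}_{8/3}$ data. For the closed curve $\eta$ is, up to sign, the indicator of the swallowed region, so the first moments $\mathbb{E}\int y\,\eta$ and $\mathbb{E}\int xy\,\eta$ are read off from Schramm's $1$-point function; carried through the conformal map to $\mathbb{H}$ these produce Catalan's constant $\mathcal{K}$, while the accompanying rational coefficients combine the $1$-point contribution with the deterministic arc signature. The coefficient of $[\mathbf{e}_1,\mathbf{e}_2]\otimes[\mathbf{e}_1,\mathbf{e}_2]$ arises from $\tfrac12\mathbb{E}\big[(\int\eta)^2\big]$ together with the $\pi_2\otimes\pi_2$ and $\pi_4$ arc cross terms: the disconnected (squared $1$-point) piece gives $\pi^2/128$, and the connected piece is the genuine $2$-point integral. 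Mapping $D$ to $\mathbb{H}$ and writing the two interior points in polar coordinates $(r_i,\theta_i)\in[0,\infty)\times[0,\pi]$ turns this $2$-point correlation into the quadruple integral \eqref{eq:quad integral}, with $G(\sigma)$ the explicit $\kappa=8/3$ two-point function expressed through the Green's-function cross-ratio $\sigma$ and $\,_{2}F_{1}$.

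I expect the main obstacle to be precisely this final step: obtaining and integrating the explicit $\kappa=8/3$ two-point function. The restriction property at $\kappa=8/3$ is what makes the joint swallowing probability accessible and yields $G(\sigma)=1-\sigma\,_{2}F_{1}(1,\tfrac43;\tfrac53;1-\sigma)$, but the resulting four-fold integral does not reduce to elementary constants, which is why $A$ is left as an integral rather than a closed form. By comparison, the $1$-point integrals (giving $\mathcal{K}$ and $\pi^2$) and the Lyndon/tensor bookkeeping — while delicate, especially in tracking the noncommutative products in $\mathbb{E}[S(\Phi)]\otimes S(\phi)$ — are essentially routine once the symmetry reductions above are in place.
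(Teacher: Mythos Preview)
Your overall architecture matches the paper's: factor $\mathbb{E}[S(\gamma)]=\mathbb{E}[S(\Phi(\gamma))]\otimes S(\phi)$, express the levels of $\mathbb{E}[S(\Phi(\gamma))]$ through winding moments via Theorem~\ref{thm:winding 1}, and plug in the known $1$- and $2$-point functions for $\kappa=8/3$. The place where your argument breaks is the symmetry step.

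You assert that reflection in the real axis makes $\mathbb{E}[\eta(\Phi(\gamma),(x,y))]$ odd in $y$. This is false: the closure map $\Phi$ concatenates $\gamma$ with the \emph{fixed upper} boundary arc, which destroys the reflection symmetry of the closed curve. Concretely, for a simple $\gamma$ the winding $\eta(\Phi(\gamma),\cdot)$ is the indicator of the region between $\gamma$ and the upper arc; when $\gamma$ is the chord this is the upper half-disc, whose indicator is certainly not odd in $y$. What reflection actually gives is
\[
\mathbb{E}\bigl[\eta(\Phi(\gamma),(x,y))\bigr]+\mathbb{E}\bigl[\eta(\Phi(\gamma),(x,-y))\bigr]=1_{D}(x,y),
\]
so the even-in-$y$ part of $\mathbb{E}[\eta(\Phi,\cdot)]$ equals $\tfrac12 1_D$, not zero. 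As a result your pruning is wrong: for instance $\mathbb{E}[L_2]=\bigl(\int\Gamma_1\bigr)[\mathbf{e}_1,\mathbf{e}_2]=\tfrac{\pi}{8}[\mathbf{e}_1,\mathbf{e}_2]\neq 0$, and similarly the even-$k$ moments in degrees $3$ and $4$ survive. These feed the degree-four answer through the cross-terms $\pi_2\mathbb{E}[S(\Phi)]\otimes\pi_2 S(\phi)$ and $\pi_3\mathbb{E}[S(\Phi)]\otimes\pi_1 S(\phi)$, and in particular the $-\tfrac{\pi}{8}\int\Gamma_1$ contribution to the $[\mathbf{e}_1,\mathbf{e}_2]^{\otimes 2}$ coefficient that you are missing. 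Your identification of $\pi^2/128$ as a ``disconnected $1$-point squared'' piece is also off; in the paper that constant arises from the deterministic arc signature (Lemma~\ref{lem:semi-circle sig}), not from $\tfrac12(\mathbb{E}\!\int\eta)^2$.

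The paper uses conjugation symmetry in a different and correct way: it is applied to $\mathbb{E}[S(\gamma)]$ itself (words with an odd number of $\mathbf{e}_2$'s vanish) \emph{after} the full tensor product $\mathbb{E}[S(\Phi(\gamma))]\otimes S(\phi)\otimes e^{\mathbf{e}_1}$ has been expanded, not to the intermediate object $\Phi(\gamma)$. If you keep all winding moments of $\Phi(\gamma)$ (as in Lemma~\ref{prop:sig of closed loop}), carry out the product, and only then discard odd-$\mathbf{e}_2$ words, your plan goes through.
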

The plan for the rest of the article is as follows. 

In section 2, we recall the basic results about the signature and
winding number. 

In section 3, we prove Theorem \ref{thm:winding 1} and Corollary
\ref{cor:winding 2}.

In section 4, we prove Theorem \ref{thm:jordan1}.

In section 5, we prove Theorem \ref{thm:SLEthm}. 

In section 6, we prove Theorems \ref{thm:SLEthem2} and \ref{thm:fourth term of sle signature }.

\subsection*{Acknowledgement }

We thank the anonymous referee for the detailed comments and suggestions.
All three authors acknowledge the support of the ERC (Grant Agreement
No.291244 Esig).

\section{Preliminaries}

\subsection{Basic notations}

\selectlanguage{british}%
Let $T\left(\left(\mathbb{R}^{d}\right)\right)$ be the set of sequences
\[
\left(a_{0},a_{1},a_{2},\ldots\right)
\]
where $a_{i}\in\left(\mathbb{R}^{d}\right)^{\otimes i}$. equipped
with the addition and multiplication operations $+$ and $\otimes$.
The binary operations $+$ and $\otimes$ are defined so that for
all $\mathbf{a},\mathbf{b}\in T\left(\left(\mathbb{R}^{d}\right)\right)$,
if $\pi^{\left(i\right)}$ denotes the projection of a sequence onto
its $i$ th term, then 
\begin{equation}
\pi^{\left(n\right)}\left(\mathbf{a}+\mathbf{b}\right):=\pi^{\left(n\right)}\left(\mathbf{a}\right)+\pi^{\left(n\right)}\left(\mathbf{b}\right)\label{eq:tensor addition}
\end{equation}
and 
\begin{equation}
\pi^{\left(n\right)}\left(\mathbf{a}\otimes\mathbf{b}\right):=\sum_{i=0}^{n}\pi^{\left(i\right)}\left(\mathbf{a}\right)\otimes\pi^{\left(n-i\right)}\left(\mathbf{b}\right).\label{eq:tensor multiplication}
\end{equation}

$T\left(\left(\mathbb{R}^{d}\right)\right)$ is called the formal
series of tensors of $\mathbb{R}^{d}$. 

\selectlanguage{english}%
Let $T^{k}\left(\mathbb{R}^{d}\right)$ denote the set of all finite
$k$-sequences 
\[
\left(a_{0},\ldots,a_{k}\right)
\]
where $a_{i}\in\left(\mathbb{R}^{d}\right)^{\otimes i}$. The addition
and multiplication operations, $+$ and $\otimes$, on $T^{k}\left(\mathbb{R}^{d}\right)$
are defined by (\ref{eq:tensor addition}) and (\ref{eq:tensor multiplication})
for $n=0,1,\ldots,k$. We will use $\pi_{k}$ to denote the projection
map from $T\left(\mathbb{R}^{d}\right)$ to $T^{k}\left(\mathbb{R}^{d}\right)$. 

\selectlanguage{british}%
For each $f_{1},\ldots,f_{k}\in\left(\mathbb{R}^{d}\right)^{*}$ define
$f_{1}\otimes\ldots\otimes f_{k}$ on $\left(\mathbb{R}^{d}\right)^{\otimes k}$
by extending linearly the relation 
\[
f_{1}\otimes\ldots\otimes f_{k}\left(v_{1}\otimes\ldots\otimes v_{k}\right):=f_{1}\left(v_{1}\right)\ldots f_{n}\left(v_{k}\right).
\]
We may extend the map $f_{1}\otimes\ldots\otimes f_{k}$ to a functional
on $T\left(\left(\mathbb{R}^{d}\right)\right)$ by defining for all
$\mathbf{a}\in T\left(\left(\mathbb{R}^{d}\right)\right)$, 
\[
f_{1}\otimes\ldots\otimes f_{k}\left(\mathbf{a}\right):=f_{1}\otimes\ldots\otimes f_{k}\left(\pi^{\left(k\right)}\left(\mathbf{a}\right)\right).
\]

\selectlanguage{english}%

\subsection{Signature }

Let $p>1$ and let $\mathcal{V}^{p}\left(\left[0,1\right],\mathbb{R}^{d}\right)$
denote the set of all continuous functions $\gamma:\left[0,1\right]\rightarrow\mathbb{R}^{d}$
such that 
\begin{equation}
\left\Vert \gamma\right\Vert _{\mathcal{V}^{p}\left(\left[0,1\right],\mathbb{R}^{d}\right)}^{p}:=\sup_{\mathcal{P}}\sum_{k}\left|\gamma_{t_{k+1}}-\gamma_{t_{k}}\right|^{p}<\infty.\label{eq:variation}
\end{equation}
where the supremum is taken over all finite partitions $\mathcal{P}:=\left(t_{0},t_{1},..,t_{n-1},t_{n}\right)$,
where $0=t_{0}<t_{1}<...<t_{n-1}<t_{n}=1$. 

The elements of $\mathcal{V}^{p}\left(\left[0,1\right],\mathbb{R}^{d}\right)$
will be called curves with finite $p$ variation. This class of paths
with finite $p$ variation is narrower than the one used by Young
\cite{Young} because we restrict our considerations to \textit{continuous}
paths. 

Note that $\left\Vert \cdot\right\Vert _{\mathcal{V}^{p}\left(\left[0,1\right],\mathbb{R}^{d}\right)}$
defines a semi-norm on $\mathcal{V}^{p}\left(\left[0,1\right],\mathbb{R}^{d}\right)$. 
\begin{defn}
\label{sig  defn}Let $1\leq p<2$. Let $\gamma\in\mathcal{V}^{p}\left(\mathbb{R}^{d}\right)$and
let 
\[
\triangle_{n}\left(s,t\right):=\left\{ \left(t_{1},\ldots,t_{n}\right):s<t_{1}<\cdots<t_{n}<t\right\} .
\]
The \textit{lift} of $\gamma$ is a function $S\left(\gamma\right)_{\cdot,\cdot}:\left\{ \left(s,t\right):0\leq s\leq t\right\} \rightarrow T\left(\left(\mathbb{R}^{d}\right)\right)$
defined by 
\begin{equation}
S\left(\gamma\right)_{s,t}=1+\sum_{n=1}^{\infty}\int_{\triangle_{n}\left(s,t\right)}\mathrm{d}\gamma_{t_{1}}\otimes\ldots\otimes\mathrm{d}\gamma_{t_{n}}\label{eq:signature definition}
\end{equation}
where the integrals are taken in the sense of Young \cite{Young}. 

The signature of a path $\gamma\in\mathcal{V}^{p}\left(\mathbb{R}^{d}\right)$
on $\left[0,1\right]$ is defined to be $S\left(\gamma\right)_{0,1}$. 
\end{defn}
We shall use the following properties of signature, whose proofs can
be found in \cite{St-flours} or \cite{Friz-victoir}.

1. (Invariance under reparametrisation)For any $t\in[0,\infty)$,
$S\left(\gamma\right)_{0,t}$ is invariant under any reparametrisation
of $\gamma$ on $\left[0,t\right]$. 

2. (Inverse) $S\left(\gamma\right)_{0,1}\otimes S\left(\overleftarrow{\gamma}\right)_{0,1}=\mathbf{1}$,
where $\overleftarrow{\gamma}\left(t\right):=\gamma\left(1-t\right)$
is the reversal of $\gamma$ and $\mathbf{1}$ is the identity element
in $T\left(\mathbb{R}^{d}\right)$.

3. (Chen's identity)$S\left(\gamma\right)_{s,u}\otimes S\left(\gamma\right)_{u,t}=S\left(\gamma\right)_{0,t}$
for any $0\leq s<u<t\leq1$

4. (Scaling and translation)Let $\lambda\in\mathbb{R}^{d}$, $\mu\in\mathbb{R}$,
then 
\[
S\left(\lambda+\mu\gamma\right)_{s,t}=1+\sum_{n=1}^{\infty}\mu^{n}\int_{\triangle_{n}\left(s,t\right)}\mathrm{d}\gamma\left(t_{1}\right)\otimes...\otimes\mathrm{d}\gamma\left(t_{n}\right)
\]

5.(Lie series) log$S\left(\gamma\right)_{0,1}$ is a Lie series. 

6.(Shuffle product formula) We define a \textit{(r,s)-shuffle} to
be a permutation of $\left\{ 1,2,...,r+s\right\} $ such that $\sigma\left(1\right)<\sigma\left(2\right)<..<\sigma\left(r\right)$
and $\sigma\left(r+1\right)<...<\sigma\left(r+s\right)$\textit{ . }
\begin{prop}
\label{prop:shuffle-1}(\cite{St-flours},Theorem 2.15) Let $1\leq p<2$
and $\gamma\in\mathcal{V}^{p}\left(\left[0,1\right],\mathbb{R}^{d}\right)$,
then 
\begin{eqnarray*}
 &  & \mathbf{e}_{k_{1}}^{*}\otimes\ldots\otimes\mathbf{e}_{k_{r}}^{*}\left(S\left(\gamma\right)_{0,1}\right)\mathbf{e}_{k_{r+1}}^{*}\otimes\ldots\otimes\mathbf{e}_{k_{r+s}}^{*}\left(S\left(\gamma\right)_{0,1}\right)\\
 & = & \sum_{\left(r,s\right)-shuffles\;\sigma}\mathbf{e}_{k_{\sigma^{-1}\left(1\right)}}^{*}\otimes\ldots\otimes\mathbf{e}_{k_{\sigma^{-1}\left(r+s\right)}}^{*}\left(S\left(\gamma\right)_{0,1}\right).
\end{eqnarray*}
where $\cdot$ is the multiplication operation in $\mathbb{R}$. 
\end{prop}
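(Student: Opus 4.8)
The plan is to recognise the product of two iterated integrals as a single integral over a product of two simplices and then to decompose that product into the simplices indexed by shuffles; equivalently, and more cleanly in the Young setting, I would argue by induction on the total length $r+s$ using integration by parts for Young integrals. Introduce the shorthand $F_{w}(t):=\mathbf{e}_{k_{i_{1}}}^{*}\otimes\cdots\otimes\mathbf{e}_{k_{i_{m}}}^{*}\left(S\left(\gamma\right)_{0,t}\right)$ for a word $w=k_{i_{1}}\cdots k_{i_{m}}$, so that the statement at $t=1$ reads $F_{w}F_{v}=F_{w\sqcup v}$, where $w\sqcup v$ denotes the formal sum over all $(r,s)$-shuffle interleavings of the words $w$ and $v$ (extended bilinearly); this formal sum is exactly the right-hand side of the proposition.

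First I would record the recursive structure of iterated integrals coming from (\ref{eq:signature definition}): for any word $u$ and single letter $c$ one has $F_{uc}(t)=\int_{0}^{t}F_{u}(s)\,\mathrm{d}\gamma^{c}_{s}$ as a Young integral, and each $F_{u}$ has finite $p$-variation on $\left[0,1\right]$ by Young's almost-multiplicativity estimate. Since $1\leq p<2$ we have $\tfrac{1}{p}+\tfrac{1}{p}>1$, so for any two such integrands the Young integral exists and the integration-by-parts identity $F_{w}(t)F_{v}(t)=\int_{0}^{t}F_{w}\,\mathrm{d}F_{v}+\int_{0}^{t}F_{v}\,\mathrm{d}F_{w}$ holds with no area-correction term.

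The induction then runs as follows. The base cases $r=0$ or $s=0$ are immediate, since the empty word gives $F_{\emptyset}\equiv1$ and $\emptyset\sqcup v=v$. For the inductive step, write $w=w'a$ and $v=v'b$ with $a,b$ single letters, so that $\mathrm{d}F_{w}(u)=F_{w'}(u)\,\mathrm{d}\gamma^{a}_{u}$ and $\mathrm{d}F_{v}(u)=F_{v'}(u)\,\mathrm{d}\gamma^{b}_{u}$. Integration by parts gives
\[
F_{w}(t)F_{v}(t)=\int_{0}^{t}F_{w}(u)F_{v'}(u)\,\mathrm{d}\gamma^{b}_{u}+\int_{0}^{t}F_{v}(u)F_{w'}(u)\,\mathrm{d}\gamma^{a}_{u}.
\]
Applying the induction hypothesis (the pairs of words inside the integrals have total length $r+s-1$) yields $F_{w}F_{v'}=F_{w\sqcup v'}$ and $F_{v}F_{w'}=F_{w'\sqcup v}$, and then appending the trailing letters via the recursion $\int_{0}^{t}F_{(\cdot)}\,\mathrm{d}\gamma^{c}=F_{(\cdot)c}$ gives $F_{w}(t)F_{v}(t)=F_{(w\sqcup v')b}(t)+F_{(w'\sqcup v)a}(t)$. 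This is precisely $F_{w\sqcup v}(t)$ by the right-deconcatenation recursion $\left(w'a\right)\sqcup\left(v'b\right)=\big(w'\sqcup v\big)a+\big(w\sqcup v'\big)b$ that defines the shuffle product, which itself encodes the combinatorial fact that each interleaving of the two words arises exactly once. Setting $t=1$ yields the proposition.

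The main obstacle is the regularity bookkeeping for the Young integrals: one must check that every functional $F_{u}$ genuinely has finite $p$-variation, so that the integrals defining $F_{uc}$ and the integration-by-parts step are all legitimate, and that no L\'evy-area correction appears. Both facts are furnished by $p<2$ through the Young condition $\tfrac{1}{p}+\tfrac{1}{p}>1$; for $p\geq2$ the identity fails in this form, and the argument breaks down exactly at the integration-by-parts step. An alternative route, avoiding integration by parts altogether, is to prove the formula first for piecewise-linear (bounded-variation) approximations $\gamma^{n}$, where the simplex-decomposition argument is elementary, and then to pass to the limit using the continuity of each $F_{u}$ in $p$-variation distance; there the obstacle is the quantitative continuity estimate, which is again supplied by Young's theory.
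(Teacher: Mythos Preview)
Your argument is correct and is the standard proof of the shuffle identity in the Young regime: induct on $r+s$, use the recursion $F_{uc}(t)=\int_{0}^{t}F_{u}\,\mathrm{d}\gamma^{c}$, apply Young integration by parts (valid because $p<2$ gives $\tfrac{1}{p}+\tfrac{1}{p}>1$, hence no area correction), and match with the right-deconcatenation recursion for $\sqcup$. The regularity bookkeeping you flag is also handled correctly: by Young's estimate, $t\mapsto\int_{0}^{t}F_{u}\,\mathrm{d}\gamma^{c}$ inherits finite $p$-variation from $\gamma$, so the induction closes.

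Regarding comparison with the paper: the paper does not supply its own proof of this proposition; it simply cites \cite{St-flours}, Theorem~2.15. Your inductive integration-by-parts argument is essentially the proof given there, so there is nothing substantive to contrast. Your alternative route via piecewise-linear approximation plus continuity in $p$-variation is also valid (and is how the paper itself transfers other identities, e.g.\ in Lemma~\ref{thm:main1}), but it is not needed here since the direct inductive proof already works in the Young setting.
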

The sum 
\[
\sum_{(r,s)-shuffles\;\sigma}\mathbf{e}_{k_{\sigma^{-1}\left(1\right)}}^{*}\otimes\ldots\otimes\mathbf{e}_{k_{\sigma^{-1}\left(r+s\right)}}^{*}
\]
is denoted by $\mathbf{e}_{k_{1}}^{*}\otimes\ldots\otimes\mathbf{e}_{k_{r}}^{*}\sqcup\mathbf{e}_{k_{r+1}}^{*}\otimes\ldots\otimes\mathbf{e}_{k_{r+s}}^{*}$.

We shall need a few approximation theorems relating the $p$-variation
of a path with its piecewise linear interpolations. For a continuous
function $\gamma$ and a partition $\mathcal{P}:=t_{0}=0<t_{1}<..<t_{n}=1$,
the piecewise linear interpolation of $\gamma$ with respect to $\mathcal{P}$
is defined as the following function on $\left[0,T\right]$: 
\[
\gamma_{t}^{\mathcal{\mathcal{P}}}:=\gamma_{t_{i}}+\left(\frac{\gamma_{t_{i+1}}-\gamma_{t_{i}}}{t_{i+1}-t_{i}}\right)\left(t-t_{i}\right)\;\mbox{for}\; t\in\left[t_{i},t_{i+1}\right]
\]

Then the following approximation theorem holds:
\begin{lem}
(Lemma 1.12 and Proposition 1.14, \cite{St-flours})\label{lem:cont2}Let
$p$ and $q$ be such that $1\leq p<q$. Let $\gamma\in\mathcal{V}^{p}\left(\left[0,1\right],\mathbb{R}^{d}\right)$.
Then for all finite partitions $\mathcal{P}$, 
\[
\left\Vert \gamma^{\mathcal{P}}\right\Vert _{\mathcal{V}^{p}\left(\left[0,1\right],\mathbb{R}^{d}\right)}\leq\left\Vert \gamma\right\Vert _{\mathcal{V}^{p}\left(\left[0,1\right],\mathbb{R}^{d}\right)}
\]

Furthermore for all $\varepsilon>0$, there exists a $\delta>0$ such
that for all partitions $\mathcal{P}$ of $\left[0,1\right]$ satisfying
$\left\Vert \mathcal{P}\right\Vert <\delta$ we have 
\begin{eqnarray*}
\left\Vert \gamma-\gamma^{\mathcal{\mathcal{P}}}\right\Vert _{\mathcal{V}^{q}\left(\left[0,1\right],\mathbb{R}^{d}\right)} & < & \varepsilon,\:\mbox{and}\\
\sup_{t\in\left[0,1\right]}\left\Vert \gamma_{t}-\gamma_{t}^{\mathcal{\mathcal{P}}}\right\Vert  & < & \varepsilon.
\end{eqnarray*}

\end{lem}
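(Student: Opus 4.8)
The plan is to establish the two assertions separately, the first being the real content. To prove $\left\Vert \gamma^{\mathcal{P}}\right\Vert _{\mathcal{V}^{p}}\leq\left\Vert \gamma\right\Vert _{\mathcal{V}^{p}}$, I would show that when computing the $p$-variation of the piecewise linear path $\gamma^{\mathcal{P}}$ it suffices to take the supremum over partitions whose points all lie in $\mathcal{P}$. Fix an arbitrary partition $\mathcal{Q}=\{s_{0}<\cdots<s_{m}\}$ and consider the sum $S(\mathcal{Q})=\sum_{k}|\gamma^{\mathcal{P}}_{s_{k+1}}-\gamma^{\mathcal{P}}_{s_{k}}|^{p}$. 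If some $s_{k}$ lies strictly inside a linear piece $(t_{i},t_{i+1})$ of $\mathcal{P}$, then, because $\gamma^{\mathcal{P}}$ is affine there, the map $x\mapsto|x-\gamma^{\mathcal{P}}_{s_{k-1}}|^{p}+|\gamma^{\mathcal{P}}_{s_{k+1}}-x|^{p}$ is convex along the segment traced by $\gamma^{\mathcal{P}}$ on $[t_{i},t_{i+1}]$: it is a sum of compositions of the convex map $x\mapsto|x-c|$ with the increasing convex map $t\mapsto t^{p}$ (here $p\geq1$ is essential). A convex function on a segment attains its maximum at an endpoint, so relocating $s_{k}$ to $t_{i}$ or $t_{i+1}$ does not decrease $S(\mathcal{Q})$. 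Iterating this (merging any duplicated points, which leaves $S$ unchanged) converts $\mathcal{Q}$ into a partition contained in $\mathcal{P}$ without decreasing $S$. At points of $\mathcal{P}$ we have $\gamma^{\mathcal{P}}=\gamma$, so the resulting sum is a genuine $p$-variation sum of $\gamma$ and is bounded by $\left\Vert \gamma\right\Vert _{\mathcal{V}^{p}}^{p}$; taking the supremum over $\mathcal{Q}$ gives the claim.

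For the uniform estimate I would invoke uniform continuity of $\gamma$ on the compact interval $[0,1]$: given $\varepsilon>0$ choose $\delta>0$ so that $|t-s|<\delta$ forces $|\gamma_{t}-\gamma_{s}|<\varepsilon$. If $\left\Vert \mathcal{P}\right\Vert <\delta$ and $t\in[t_{i},t_{i+1}]$, then $\gamma^{\mathcal{P}}_{t}$ is a convex combination of $\gamma_{t_{i}}$ and $\gamma_{t_{i+1}}$, each within $\varepsilon$ of $\gamma_{t}$, whence $\sup_{t}|\gamma_{t}-\gamma^{\mathcal{P}}_{t}|<\varepsilon$.

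The $q$-variation convergence I would then deduce by interpolation between the $p$-variation and the sup norm. For any continuous $f$ and any $q>p$, bounding one of the two factors in $|f_{t_{k+1}}-f_{t_{k}}|^{q}=|f_{t_{k+1}}-f_{t_{k}}|^{q-p}\,|f_{t_{k+1}}-f_{t_{k}}|^{p}$ by $\bigl(2\sup_{t}|f_{t}|\bigr)^{q-p}$ and summing yields
\[
\left\Vert f\right\Vert _{\mathcal{V}^{q}}^{q}\leq\bigl(2\sup_{t}|f_{t}|\bigr)^{q-p}\,\left\Vert f\right\Vert _{\mathcal{V}^{p}}^{p}.
\]
Applying this to $f=\gamma-\gamma^{\mathcal{P}}$, the triangle inequality for the semi-norm together with the first assertion gives $\left\Vert \gamma-\gamma^{\mathcal{P}}\right\Vert _{\mathcal{V}^{p}}\leq2\left\Vert \gamma\right\Vert _{\mathcal{V}^{p}}$, which stays bounded, while the uniform estimate forces $\sup_{t}|\gamma_{t}-\gamma^{\mathcal{P}}_{t}|\to0$ as $\left\Vert \mathcal{P}\right\Vert \to0$. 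Since $q-p>0$, the right-hand side tends to $0$, proving the second assertion.

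The main obstacle is the first assertion. The tempting idea that refining a partition increases the $p$-variation sum is false for $p>1$, as consecutive collinear increments already show, so a refinement argument alone cannot work. The correct mechanism is the opposite: interpolation manufactures exactly the affine pieces along which \emph{relocating} interior partition points to the endpoints is favourable, and the convexity of $x\mapsto|x-c|^{p}$ is what makes this rigorous even when the two neighbouring increments straddle a corner of $\mathcal{P}$. Care is needed to verify that the one-at-a-time relocation procedure terminates in a sub-partition of $\mathcal{P}$ and never decreases $S(\mathcal{Q})$; once that reduction is in place the remaining estimates are routine.
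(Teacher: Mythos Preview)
The paper does not prove this lemma; it is quoted without proof from Lyons--Caruana--L\'evy (Lemma~1.12 and Proposition~1.14 there), so there is no in-paper argument to compare against. Your proof is correct and is essentially the standard one found in that reference: the convexity relocation trick for the first assertion, uniform continuity of $\gamma$ for the sup-norm estimate, and the interpolation inequality
\[
\Vert f\Vert_{\mathcal{V}^{q}}^{q}\le\bigl(2\sup_{t}\vert f_{t}\vert\bigr)^{q-p}\,\Vert f\Vert_{\mathcal{V}^{p}}^{p}
\]
for the $q$-variation convergence.

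The one place that genuinely needs the care you flag is when several consecutive $\mathcal{Q}$-points lie in the same linear piece $(t_{i},t_{i+1})$: your sentence ``relocating $s_{k}$ to $t_{i}$ or $t_{i+1}$'' is not literally valid then, since that could jump over $s_{k-1}$ or $s_{k+1}$. The fix is exactly what your convexity argument already supports: restrict the move to the sub-segment $[\max(s_{k-1},t_{i}),\min(s_{k+1},t_{i+1})]$, on which the two-term sum is still convex, so its maximum is at an endpoint. Either endpoint is a legal move (a $\mathcal{P}$-point or a merge with a neighbour), and each step strictly decreases the number of non-$\mathcal{P}$ points in $\mathcal{Q}$, giving termination.
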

The following lemma is extremely useful in proving the properties
of Young's integral.
\begin{lem}
\label{lem:approx3}Let $\gamma:\left[0,1\right]\rightarrow\mathbb{R}^{d}$
be a continuous curve with finite $p$-variation, where $p<2$. Let
$\mathcal{P}_{m}$ be a sequence of partitions such that $\mathcal{P}_{m}$
contains both $0$ and $1$ for all $m$ and $\left\Vert \mathcal{P}_{m}\right\Vert \rightarrow0$
as $m\rightarrow\infty$. For any $\left(i_{1},\ldots,i_{n}\right)\in\left\{ 1,\ldots,d\right\} ^{n}$,
\begin{equation}
\mathbf{e}_{i_{1}}^{*}\otimes\ldots\otimes\mathbf{e}_{i_{n}}^{*}\left[S\left(\gamma\right)_{0,1}\right]=\lim_{m\rightarrow\infty}\mathbf{e}_{i_{1}}^{*}\otimes\ldots\otimes\mathbf{e}_{i_{n}}^{*}\left[S\left(\gamma_{s}^{\mathcal{P}_{m}}\right)_{0,1}\right].\label{eq:sigapprox}
\end{equation}
\end{lem}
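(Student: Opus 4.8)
The plan is to reduce the statement to a continuity property of Young's iterated integrals in the $q$-variation topology for a fixed $q$ with $p<q<2$, and then feed in the approximation estimates of Lemma \ref{lem:cont2}. First I would fix $q\in(p,2)$. Since $\gamma$ has finite $p$-variation and $p<q$ it has finite $q$-variation, and the bound $\|\gamma^{\mathcal{P}_m}\|_{\mathcal{V}^{p}}\le\|\gamma\|_{\mathcal{V}^{p}}$ from Lemma \ref{lem:cont2} gives $\sup_m\|\gamma^{\mathcal{P}_m}\|_{\mathcal{V}^{q}}<\infty$. Because $\|\mathcal{P}_m\|\to0$, the second part of Lemma \ref{lem:cont2} yields $\|\gamma-\gamma^{\mathcal{P}_m}\|_{\mathcal{V}^{q}}\to0$ and $\sup_{t}|\gamma_t-\gamma_t^{\mathcal{P}_m}|\to0$. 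Moreover, since $0,1\in\mathcal{P}_m$, the interpolation agrees with $\gamma$ at the endpoints, so $(\gamma^{\mathcal{P}_m}-\gamma)_0=(\gamma^{\mathcal{P}_m}-\gamma)_1=0$.

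The heart of the argument is an induction on $n$ showing that, for any sequence $\beta^m\to\beta$ in $\mathcal{V}^{q}$ with $\sup_m\|\beta^m\|_{\mathcal{V}^{q}}<\infty$ and $q<2$, each coordinate functional $\mathbf{e}_{i_1}^*\otimes\cdots\otimes\mathbf{e}_{i_n}^*[S(\cdot)_{0,t}]$ converges in $\mathcal{V}^{q}$, uniformly on $[0,1]$. The case $n=1$ is immediate from endpoint convergence. For the inductive step I would write the level-$n$ integral as a single Young integral
\[
\mathbf{e}_{i_1}^*\otimes\cdots\otimes\mathbf{e}_{i_n}^*[S(\beta)_{0,t}]=\int_0^t Y^{\beta}_u\,\mathrm{d}\beta_u^{i_n},\qquad Y^{\beta}_u:=\mathbf{e}_{i_1}^*\otimes\cdots\otimes\mathbf{e}_{i_{n-1}}^*[S(\beta)_{0,u}],
\]
and use bilinearity of the integral to split
\[
\int_0^1 Y^{\beta^m}\,\mathrm{d}\beta^m-\int_0^1 Y^{\beta}\,\mathrm{d}\beta=\int_0^1\left(Y^{\beta^m}-Y^{\beta}\right)\mathrm{d}\beta^m+\int_0^1 Y^{\beta}\,\mathrm{d}\left(\beta^m-\beta\right).
\]
Young's inequality (the $1/q+1/q>1$ regime, available since $q<2$; see \cite{St-flours,Friz-victoir}) bounds each term by a constant times the product of the $q$-variation norms of integrand and integrator, plus a boundary term $|Z_0|\,|\beta_1-\beta_0|$ that vanishes here because $Y^{\beta}_0=0$ for $n\ge2$ and because the endpoints match. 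The inductive hypothesis supplies $\|Y^{\beta^m}-Y^{\beta}\|_{\mathcal{V}^{q}}\to0$ and $\sup_m\|Y^{\beta^m}\|_{\mathcal{V}^{q}}<\infty$, so both terms tend to $0$; applying the same estimate on the subintervals of an arbitrary partition upgrades this to $\mathcal{V}^{q}$-convergence and furnishes the uniform $q$-variation bound needed at the next level.

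The main obstacle is the bookkeeping in this inductive step: I must propagate simultaneously the uniform bound $\sup_m\|\mathbf{e}_{i_1}^*\otimes\cdots\otimes\mathbf{e}_{i_k}^*[S(\beta^m)_{0,\cdot}]\|_{\mathcal{V}^{q}}<\infty$ and the $\mathcal{V}^{q}$-convergence of the differences, rather than mere uniform convergence, so that the partial iterated integrals remain admissible Young integrands at the next level. This forces one to estimate the $q$-variation of the iterated-integral paths over subintervals through the Young--L\oe ve estimate, for which the uniform $q$-variation control of $\beta^m$ from the first paragraph is essential. Once this continuity is established, the Lemma follows by taking $\beta^m=\gamma^{\mathcal{P}_m}$ and $\beta=\gamma$ and reading off the values at $t=1$.
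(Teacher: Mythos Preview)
Your argument is correct: the combination of Lemma~\ref{lem:cont2} (uniform $p$-variation bound plus $q$-variation convergence for some $q\in(p,2)$) with the Young--L\oe ve estimate, propagated by induction on the level $n$, is exactly the standard route to continuity of iterated Young integrals, and the bookkeeping you flag---tracking both the uniform $q$-variation bound and the $q$-variation convergence of the partial integrals---is the right thing to watch. One minor remark: once you assume $\|\beta^m-\beta\|_{\mathcal V^q}\to 0$, the uniform bound $\sup_m\|\beta^m\|_{\mathcal V^q}<\infty$ is automatic by the triangle inequality, so you need not list it as a separate hypothesis.

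The paper itself does not prove this lemma; it simply cites Corollary~2.11 of \cite{St-flours}. Your write-up is essentially a self-contained reconstruction of that result, so in spirit it coincides with what the paper invokes. The only practical difference is that the cited reference packages the induction and the Young estimate into a single continuity statement for the truncated signature map $\gamma\mapsto\pi_N(S(\gamma)_{0,1})$ in the $q$-variation topology, which is slightly cleaner than tracking coordinate functionals one at a time; your coordinate-wise argument is equivalent but a touch more laborious.
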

\begin{proof}
See Corollary 2.11 in \cite{St-flours}. 
\end{proof}
\selectlanguage{british}%

\subsection{Winding number }

\selectlanguage{english}%
In this section, we shall recall the definition of winding number
and a few key basic facts that we shall use. 
\begin{defn}
Let $\gamma:\left[0,1\right]\rightarrow\mathbb{R}^{2}$ be a continuous
function. Then

1.$\gamma$ is a closed curve if $\gamma_{0}=\gamma_{1}$.

2.$\gamma$ is a simple closed curve if $\gamma_{s}=\gamma_{t}$ implies
either $s=t$ or $\left\{ s,t\right\} =\left\{ 0,1\right\} $. 

3.$\gamma$ is a simple curve if $\gamma_{s}=\gamma_{t}$ implies
$s=t$. 
\end{defn}
Let $\gamma:\left[0,1\right]\rightarrow\mathbb{R}^{2}$ be a continuous
function. Let $z\in\mathbb{R}^{2}\backslash\gamma\left[0,1\right]$.
Then 
\[
g_{z}^{\gamma}\left(s\right):=\frac{\gamma_{s}-z}{\left\Vert \gamma_{s}-z\right\Vert }
\]
defines a function $\left[0,1\right]\rightarrow\mathbb{S}^{1}$. 

Let $p:\mathbb{R}\rightarrow\mathbb{S}^{1}$, $p\left(x\right)=e^{ix}$
be a covering map for $\mathbb{S}^{1}$. Then there exists a continuous
lift $\tilde{g}_{z}^{\gamma}:\left[0,1\right]\rightarrow\mathbb{R}$
such that $p\circ\tilde{g}_{z}^{\gamma}=g_{z}^{\gamma}$. The winding
number of $\gamma$ will be defined in terms of $\tilde{g}_{s}\left(z\right)$
by the following lemma:
\begin{lem}
(\cite{Dr. Qian's complex analysis book}, Chapter 3 Lemma 1 and 2)\label{lem:winding def}Let
$\gamma:\left[0,1\right]\rightarrow\mathbb{R}^{2}$ be a continuous
closed curve, and $z\in\gamma\left[0,1\right]$. Then the number 
\begin{equation}
\eta\left(\gamma,z\right):=\frac{1}{2\pi}\left(\tilde{g}_{z}^{\gamma}\left(1\right)-\tilde{g}_{z}^{\gamma}\left(0\right)\right)\label{eq:defn winding angle}
\end{equation}
depends only on $\gamma$ and $z$ but not on the lift $\tilde{g}_{z}^{\gamma}$.
Moreover, $\eta\left(\gamma,z\right)$ is an integer and is called
the winding number of $\gamma$ around the point $z$. \end{lem}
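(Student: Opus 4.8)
The plan is to treat this as a standard consequence of the unique path-lifting property of the covering map $p\colon\mathbb{R}\to\mathbb{S}^1$, $p(x)=e^{ix}$, together with the connectedness of $[0,1]$ and the discreteness of each fibre $p^{-1}(w)=\{x_0+2\pi m:m\in\mathbb{Z}\}$. Since $z\notin\gamma[0,1]$, the map $g_z^\gamma(s)=(\gamma_s-z)/\|\gamma_s-z\|$ is continuous into $\mathbb{S}^1$, so the existence of a continuous lift $\tilde g_z^\gamma$ is guaranteed (and is in any case taken as given in the discussion preceding the statement). It therefore remains to establish two things: that the quantity $\tilde g_z^\gamma(1)-\tilde g_z^\gamma(0)$ is independent of the chosen lift, and that it is an integer multiple of $2\pi$.

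For the independence, I would suppose $\tilde g_1$ and $\tilde g_2$ are two continuous lifts of the same map $g_z^\gamma$. Then $p\circ\tilde g_1=p\circ\tilde g_2$ forces $e^{i\tilde g_1(s)}=e^{i\tilde g_2(s)}$ for every $s$, so the continuous function $s\mapsto\frac{1}{2\pi}\bigl(\tilde g_1(s)-\tilde g_2(s)\bigr)$ takes values in the discrete set $\mathbb{Z}$. Because $[0,1]$ is connected, this function is constant, say equal to $k\in\mathbb{Z}$; hence $\tilde g_1(s)=\tilde g_2(s)+2\pi k$ for all $s$, and the additive constant $2\pi k$ cancels in the difference $\tilde g_i(1)-\tilde g_i(0)$. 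This shows $\eta(\gamma,z)$ is well defined.

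For integrality, I would invoke the closed-curve hypothesis $\gamma_0=\gamma_1$, which gives $g_z^\gamma(0)=g_z^\gamma(1)$. Consequently $\tilde g_z^\gamma(0)$ and $\tilde g_z^\gamma(1)$ both lie in the single fibre $p^{-1}\bigl(g_z^\gamma(0)\bigr)=\tilde g_z^\gamma(0)+2\pi\mathbb{Z}$, so their difference is an integer multiple of $2\pi$, and therefore $\eta(\gamma,z)=\frac{1}{2\pi}\bigl(\tilde g_z^\gamma(1)-\tilde g_z^\gamma(0)\bigr)\in\mathbb{Z}$.

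I do not expect a genuine obstacle here: the whole argument rests on the elementary dichotomy that a continuous map from the connected interval $[0,1]$ into a discrete set must be constant. The only point requiring a little care is to verify that the fibre-difference function is indeed continuous and integer-valued before invoking connectedness; once that is in place, both the well-definedness and the integrality follow immediately. This is precisely the content of the two cited lemmas in \cite{Dr. Qian's complex analysis book}, so in the write-up it would suffice either to reproduce this short covering-space argument or simply to appeal to that reference.
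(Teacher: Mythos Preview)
Your argument is correct and is the standard covering-space proof of this fact. Note, however, that the paper does not supply its own proof of this lemma at all: it simply states the result and cites \cite{Dr. Qian's complex analysis book}, Chapter~3, Lemmas~1 and~2, so there is nothing to compare against beyond observing that your write-up is exactly the elementary argument one finds in that reference.
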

\begin{rem}
We may define the winding number for any $\gamma:\left[a,b\right]\rightarrow\mathbb{R}^{2}$
by simply replacing $0$ by $a$, $1$ by $b$ in the above definition. 
\end{rem}
The following theorem, which we shall need, is intuitively clear but
is highly non-trivial: 
\begin{thm}
(\cite{Munkres}, p404)\label{thm:Jordan winding}Let $\gamma:\left[0,1\right]\rightarrow\mathbb{R}^{2}$
be a simple closed curve. Let Int($\gamma$) and Ext($\gamma$) be
its interior and exterior respectively. Then $\eta\left(\gamma,z\right)=0$
for all $z\in$Ext($\gamma$). Moreover, either $\eta\left(\gamma,z\right)=1$
for all $z\in Int(\gamma)$ or $\eta\left(\gamma,z\right)=-1$ for
all $z\in Int\left(\gamma\right)$. $\gamma$ is called positively
oriented if $\eta\left(\gamma,z\right)=1$ and negatively oriented
otherwise. 
\end{thm}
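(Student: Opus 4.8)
The plan is to combine the integer-valuedness and continuity of the winding number with the Jordan Curve Theorem, and then to pin down the interior value by a local analysis of how $\eta$ jumps as one crosses the curve. First I would establish that $z\mapsto\eta(\gamma,z)$ is locally constant on $\mathbb{R}^{2}\setminus\gamma[0,1]$. Fixing $z_{0}$ off the curve, the denominator $\Vert\gamma_{s}-z\Vert$ stays bounded away from $0$ uniformly in $s$ for $z$ near $z_{0}$ (by compactness of $\gamma[0,1]$), so $s\mapsto g_{z}^{\gamma}(s)$, and with it the lift $\tilde{g}_{z}^{\gamma}$ and the number $\eta(\gamma,z)$ of (\ref{eq:defn winding angle}), vary continuously in $z$. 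Being integer-valued by Lemma \ref{lem:winding def}, $\eta(\gamma,\cdot)$ is then constant on each connected component of the complement. Invoking the separation part of the Jordan Curve Theorem from \cite{Munkres}, the complement has exactly two components, the bounded $\mathrm{Int}(\gamma)$ and the unbounded $\mathrm{Ext}(\gamma)$, so it remains only to compute the two constant values.

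Next I would evaluate the exterior value. For $\Vert z\Vert>\sup_{s}\Vert\gamma_{s}\Vert$ one has $\langle\gamma_{s}-z,z\rangle=\langle\gamma_{s},z\rangle-\Vert z\Vert^{2}\le\Vert\gamma_{s}\Vert\Vert z\Vert-\Vert z\Vert^{2}<0$ for every $s$, so all the vectors $\gamma_{s}-z$ lie in the open half-plane $\{w:\langle w,z\rangle<0\}$. Hence $g_{z}^{\gamma}$ takes values in a fixed semicircle of $\mathbb{S}^{1}$, its lift cannot increase by a full $2\pi$, and $\eta(\gamma,z)=0$. All such $z$ lie in the unbounded component, so $\eta\equiv0$ on $\mathrm{Ext}(\gamma)$.

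For the interior value I would use a local crossing (jump) argument. Choose $p=\gamma(t_{0})$ and a small disc $B$ centred at $p$ in which $\gamma$ meets $B$ in a single simple sub-arc dividing $B$ into two open pieces, and pick $a,b$ one in each piece. Splitting the loop $\gamma$ into its portion inside $B$ and its portion outside $B$, the outside portion contributes the same angular increment to $\tilde{g}_{a}^{\gamma}$ and to $\tilde{g}_{b}^{\gamma}$ up to an error that tends to $0$ as $B$ shrinks and $a,b\to p$ (the two base points are then close together relative to the far-away exterior arc), while the inside portion contributes increments differing by exactly $\pm2\pi$, since as $s$ sweeps across the crossing the directions of $\gamma_{s}-a$ and $\gamma_{s}-b$ differ by a net full turn. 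Thus $\eta(\gamma,a)-\eta(\gamma,b)=\pm1\ne0$, so $a$ and $b$ lie in different components; as there are only two and $\mathrm{Ext}(\gamma)$ carries the value $0$, the interior value is forced to be $\pm1$, constant on $\mathrm{Int}(\gamma)$ by the first step, with its sign defining the orientation.

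The main obstacle is this last step: for a merely continuous (finite $p$-variation) Jordan curve it is delicate both to produce a disc in which $\gamma$ reduces to a single separating arc and to justify the exact $\pm2\pi$ jump, since a Jordan curve can be locally wild. A robust alternative that avoids the local geometry is to apply the Jordan--Schoenflies theorem to obtain a homeomorphism $h$ of $\mathbb{R}^{2}$ carrying $\gamma[0,1]$ onto the unit circle and $\mathrm{Int}(\gamma)$ onto the open unit disc; since every plane homeomorphism multiplies winding numbers by $\pm1$, and the unit circle traversed once --- which is what the simple loop $h\circ\gamma$ must be --- has winding number $\pm1$ about each interior point, transporting back yields $\eta(\gamma,z)=\pm1$ on $\mathrm{Int}(\gamma)$.
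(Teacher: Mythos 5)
Your proof is correct, but there is nothing in the paper to compare it against step by step: the paper does not prove Theorem \ref{thm:Jordan winding} at all, it imports it verbatim from \cite{Munkres}, p.~404. Measured against that cited source, your route is genuinely different in its key step. Your first two stages are the standard ones (and essentially what Munkres also does): local constancy of $z\mapsto\eta\left(\gamma,z\right)$ from integer-valuedness (Lemma \ref{lem:winding def}) plus continuity of the lift in $z$, and the vanishing of $\eta$ at points of large norm via the half-plane/semicircle observation, which pins down the value $0$ on all of Ext$\left(\gamma\right)$. For the interior value, however, Munkres extracts $\pm1$ directly from the Jordan curve theorem machinery by degree-theoretic lemmas, without ever invoking Schoenflies; you instead --- rightly --- discard your local crossing sketch (for a merely continuous Jordan curve there need be no small disc meeting the image in a single separating arc, and the claimed exact $\pm2\pi$ jump cannot be justified pointwise, so that paragraph should be deleted rather than repaired) and rest the whole computation on the Jordan--Schoenflies theorem. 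That fallback is sound: a continuous injection of $\mathbb{S}^{1}$ onto $\mathbb{S}^{1}$ is automatically a homeomorphism, hence has degree $\pm1$, and since $h\circ\gamma$ lies on the unit circle this degree \emph{is} $\eta\left(h\circ\gamma,0\right)$. One assertion deserves a line of justification that you omit: that a plane homeomorphism multiplies winding numbers by $\pm1$. This follows because $h$ restricts to a homeomorphism of $\mathbb{R}^{2}\backslash\left\{ z\right\} $ onto $\mathbb{R}^{2}\backslash\left\{ h\left(z\right)\right\} $ and so induces an isomorphism $\mathbb{Z}\rightarrow\mathbb{Z}$ of fundamental groups, under which the class of a loop is its winding number; note also that you do not need the sign to be uniform in $z$, since constancy on Int$\left(\gamma\right)$ was already secured in your first step. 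The trade-off relative to the cited proof: Schoenflies is a strictly stronger theorem than the Jordan curve theorem, so your argument is shorter and more transparent but leans on a heavier black box than the source the paper relies on.
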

A key tool in our proof of Theorem \ref{thm:winding 1} is the following
Green's theorem for paths with bounded total variations.
\begin{thm}
(\cite{Greens p equal one} and \cite{green p11})Let $\gamma=\left(\gamma^{\left(1\right)},\gamma^{\left(2\right)}\right):\left[0,T\right]\rightarrow\mathbb{R}^{2}$
be a closed curve with bounded total variation. Let $f,g:\mathbb{R}^{2}\rightarrow\mathbb{R}$
have continuous partial derivatives in both variables. Then 
\begin{equation}
\int_{\mathbb{R}^{2}}\left(\partial_{x}f\left(x,y\right)+\partial_{y}g\left(x,y\right)\right)\eta\left(\gamma,\left(x,y\right)\right)\mathrm{d}x\mathrm{d}y=\int_{\gamma}f\mathrm{d}\gamma_{s}^{\left(2\right)}-g\mathrm{d}\gamma_{s}^{\left(1\right)}.\label{eq:green1-2}
\end{equation}
and 
\begin{equation}
\left\Vert \eta\left(\gamma,\cdot\right)\right\Vert _{L^{2}}\leq\frac{1}{\sqrt{4\pi}}\left\Vert \gamma\right\Vert _{\mathcal{V}^{1}\left(\left[0,T\right],\mathbb{R}^{2}\right)}\label{eq:isoperimetric}
\end{equation}
where the equality in (\ref{eq:isoperimetric}) holds if and only
if there exists $\left(x,y\right)\in\mathbb{R}^{2}$, $n\in\mathbb{N}$
and $R>0$ such that $\gamma_{t}=\left(x+R\cos2\pi nt,x+R\sin2\pi nt\right)$.
\end{thm}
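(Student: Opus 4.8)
The plan is to establish \eqref{eq:green1-2} first for closed polygonal curves, then to pass to a general curve of bounded variation by approximation, and to treat \eqref{eq:isoperimetric} as the Banchoff--Pohl strengthening of the classical isoperimetric inequality along the same smooth-then-approximate scheme; the two arguments are intertwined, since the $L^{2}$ bound \eqref{eq:isoperimetric} for polygons is exactly the compactness that legitimises the limit in \eqref{eq:green1-2}. For the polygonal case of \eqref{eq:green1-2}, note that if $\gamma$ is a closed piecewise linear curve then $\gamma[0,1]$ is Lebesgue-null and $\mathbb{R}^{2}\setminus\gamma[0,1]$ has finitely many open faces, on each of which $\eta(\gamma,\cdot)$ is a constant integer, vanishing on the unbounded face. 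Writing $\omega:=f\,\mathrm{d}y-g\,\mathrm{d}x$, so that $\mathrm{d}\omega=(\partial_{x}f+\partial_{y}g)\,\mathrm{d}x\wedge\mathrm{d}y$, I would use that the $1$-cycle $\gamma$ equals the oriented boundary $\partial(\sum_{j}k_{j}F_{j})$, where the $F_{j}$ are the bounded faces and $k_{j}$ their winding numbers; applying the classical (smooth) Green's theorem on each $F_{j}$ and summing gives $\int_{\gamma}\omega=\sum_{j}k_{j}\int_{F_{j}}\mathrm{d}\omega=\int_{\mathbb{R}^{2}}(\partial_{x}f+\partial_{y}g)\,\eta(\gamma,\cdot)$, which is \eqref{eq:green1-2}.

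To reach a general closed $\gamma\in\mathcal{V}^{1}$, I would take its piecewise linear interpolations $\gamma^{\mathcal{P}_{m}}$ along partitions with $\|\mathcal{P}_{m}\|\to0$; by Lemma~\ref{lem:cont2} these converge to $\gamma$ uniformly, remain closed, and satisfy $\|\gamma^{\mathcal{P}_{m}}\|_{\mathcal{V}^{1}}\le\|\gamma\|_{\mathcal{V}^{1}}$. The right-hand side of \eqref{eq:green1-2} converges by continuity of the Young--Stieltjes integral under uniform convergence with a uniform variation bound (as in Lemma~\ref{lem:approx3}). For the left-hand side, for every $z\notin\gamma[0,1]$ the homotopy invariance of the winding number gives $\eta(\gamma^{\mathcal{P}_{m}},z)=\eta(\gamma,z)$ for all large $m$, and since the rectifiable set $\gamma[0,1]$ is Lebesgue-null this is convergence almost everywhere. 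All the $\eta(\gamma^{\mathcal{P}_{m}},\cdot)$ are supported in a common ball, and \eqref{eq:isoperimetric} applied to each polygon bounds them uniformly in $L^{2}$; hence by Vitali's theorem $(\partial_{x}f+\partial_{y}g)\,\eta(\gamma^{\mathcal{P}_{m}},\cdot)$ converges in $L^{1}$, and the left-hand side converges to its claimed value.

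For \eqref{eq:isoperimetric}, writing $L:=\|\gamma\|_{\mathcal{V}^{1}}$ for the length, the claim reads $\int_{\mathbb{R}^{2}}\eta(\gamma,\cdot)^{2}\le L^{2}/(4\pi)$; because $\eta^{2}\ge\eta$ pointwise for an integer-valued function, this is strictly stronger than the classical bound $\int\eta\le L^{2}/(4\pi)$ on the signed area, and it is precisely the Banchoff--Pohl inequality. For smooth $\gamma$ I would start from the identity $\int\eta^{2}=\oint_{\gamma}\partial_{\nu}u\,\mathrm{d}s$, obtained by applying the already-proved \eqref{eq:green1-2} to $(f,g)=\nabla u$, where $\Delta u=\eta(\gamma,\cdot)$ is the Newtonian potential of $\eta$ (regularised by a mollification of $\eta$ so that $f,g$ are genuinely $C^{1}$, then letting the mollification parameter tend to $0$). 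The sharp constant $1/\sqrt{4\pi}$ and the rigidity are then the content of the classical Banchoff--Pohl estimate, most cleanly obtained by reducing, via the arc-length Fourier expansion of $\gamma$, to a termwise Wirtinger inequality in which only the first harmonic can attain equality. The extension to $\gamma\in\mathcal{V}^{1}$ is then immediate from the polygonal approximation above: almost-everywhere convergence of winding numbers, Fatou's lemma, and $\|\gamma^{\mathcal{P}_{m}}\|_{\mathcal{V}^{1}}\le\|\gamma\|_{\mathcal{V}^{1}}$ give $\int\eta(\gamma,\cdot)^{2}\le\liminf_{m}\int\eta(\gamma^{\mathcal{P}_{m}},\cdot)^{2}\le L^{2}/(4\pi)$.

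I expect the essential difficulty to lie entirely in \eqref{eq:isoperimetric}: the representation $\int\eta^{2}=\oint_{\gamma}\partial_{\nu}u\,\mathrm{d}s$ reduces the problem to a trace estimate for the potential $u$ on $\gamma$, but extracting the \emph{optimal} constant (a crude Cauchy--Schwarz bound on the trace will not be sharp) requires the finer Fourier or integral-geometric analysis. The rigidity statement is the most delicate point, both in identifying the multiply-traversed circle as the unique smooth extremiser and in carrying the equality case through the non-smooth approximation to conclude it for all of $\mathcal{V}^{1}$.
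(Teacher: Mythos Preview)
The paper does not prove this theorem; it is quoted as a preliminary result from the literature. Immediately after the statement the authors write that the case $f(x,y)=x$, $g(x,y)=y$ of \eqref{eq:green1-2} was proved by Rado \cite{Greens p equal one}, that the general case is ``essentially the same'', with further complete proofs in \cite{New proof green} and \cite{Yamthesis}, and that \eqref{eq:isoperimetric} ``is the well-known Banchoff--Pohl isoperimetric inequality \cite{green p11}''. There is therefore no proof in the paper to compare your proposal against.

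As for the proposal itself: your treatment of \eqref{eq:green1-2} (polygonal reduction via the face decomposition, then passage to the limit using uniform $L^{2}$ control of the winding number and a.e.\ convergence off the image) is sound and is indeed in the spirit of the arguments in \cite{Greens p equal one,New proof green}. Your outline for \eqref{eq:isoperimetric}, however, has a real gap at the sharp constant. The representation $\int\eta^{2}=\oint_{\gamma}\partial_{\nu}u\,\mathrm{d}s$ is fine, but the subsequent claim that the optimal constant follows ``via the arc-length Fourier expansion of $\gamma$'' and a ``termwise Wirtinger inequality'' is the Hurwitz argument for the \emph{classical} isoperimetric inequality, which bounds $\int\eta$, not $\int\eta^{2}$; for non-simple curves these differ, and the Fourier coefficients of $\gamma$ do not control $\int\eta^{2}$ term by term in the same way. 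The actual Banchoff--Pohl proof \cite{green p11} is integral-geometric (averaging the number of intersection points of $\gamma$ with lines and invoking a Crofton-type identity), and the Fourier route you describe would require substantial additional work to reach $\int\eta^{2}$ with the sharp constant and the equality case. You correctly flag this as the ``essential difficulty'', but as written the argument for \eqref{eq:isoperimetric} is a restatement of the goal rather than a proof.
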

The $f\left(x,y\right)=x$, $g\left(x,y\right)=y$ case in (\ref{eq:green1-2})
was proved in \cite{Greens p equal one} and the proof for the general
case is essentially the same. New, complete proofs for (\ref{eq:green1-2})
were subsequently given by \cite{New proof green} and \cite{Yamthesis}. 

The second inequality is the well-known Banchoff-Pohl isoperimetric
inequality\cite{green p11}.

\section{Proof of Theorem \ref{thm:winding 1}}

Before we give a proof of Theorem \ref{thm:winding 1}, we would like
to first recall some elementary Lie algebra.

\subsection{Lyndon basis }

We shall briefly introduce the concept of Lyndon basis. For details,
readers are referred to \cite{Reutenauer}. Let $\mathcal{L}\left(\left\{ \mathbf{e}_{1},\mathbf{e}_{2}\right\} \right)$
be the set of Lie series generated by $\left\{ \mathbf{e}_{1},\mathbf{e}_{2}\right\} $
through the tensor product $\otimes$ and let $\mathcal{L}_{N}\left(\left\{ \mathbf{e}_{1},\mathbf{e}_{2}\right\} \right):=\pi_{N}\left(\mathcal{L}\left(\left\{ \mathbf{e}_{1},\mathbf{e}_{2}\right\} \right)\right)$.
We shall recall the definition of the Lyndon basis, which we used
to decompose $\pi_{N}\left(\log S\left(\gamma\right)_{0,1}\right)$
in Theorem \ref{thm:winding 1}. 

From here onwards, a \textit{word} will mean a monomial generated
by $\left\{ \mathbf{e}_{1},\mathbf{e}_{2}\right\} $ through $\otimes.$
The identity element with respect to $\otimes$ is the empty word
which will be denoted by $1$. We shall assign a lexicographical order
on the set of words by the following rule:
\begin{enumerate}
\item $\mathbf{e}_{1}<\mathbf{e}_{2}$.
\item If $\mathbf{v}=\mathbf{u}\otimes\mathbf{x}$ for some word $\mathbf{x}$,
then $\mathbf{u}<\mathbf{v}$. 
\item If $\mathbf{w}=\mathbf{u}\otimes\mathbf{e}_{1}\otimes\mathbf{x}$
and $\mathbf{w^{\prime}}=\mathbf{u}\otimes\mathbf{e}_{2}\otimes\mathbf{x^{\prime}}$
for words $\mathbf{u},\mathbf{x},\mathbf{x^{\prime}}$, then $\mathbf{w}<\mathbf{w^{\prime}}$.
\end{enumerate}
We say a word $\mathbf{w}$ is Lyndon if either $\mathbf{w}=\mathbf{e}_{1}$
or $\mathbf{w}=\mathbf{e}_{2}$ or for all $\mathbf{u}\neq1$,$\mathbf{v}\neq1$
such that $\mathbf{u}\otimes\mathbf{v}=\mathbf{w}$, we have $\mathbf{w}<\mathbf{v}$.
For each word $\mathbf{w}$, $\mathbf{w}\neq\mathbf{e}_{1},\mathbf{e}_{2}$,
if $\mathbf{v}$ is the smallest non-empty Lyndon word such that $\mathbf{w}=\mathbf{u}\otimes\mathbf{v}$
for some non-empty word $\mathbf{u}$, then we say $\mathbf{w}=\mathbf{u}\otimes\mathbf{v}$
is the standard factorisation of a Lyndon word $\mathbf{w}$.
\begin{example}
\label{Lyndon words level 4}The Lyndon words of degree less than
or equal to $4$ generated by $\left\{ \mathbf{e}_{1},\mathbf{e}_{2}\right\} $
are 
\[
\mathbf{e}_{1}<\mathbf{e}_{1}^{\otimes3}\otimes\mathbf{e}_{2}<\mathbf{e}_{1}^{\otimes2}\otimes\mathbf{e}_{2}<\mathbf{e}_{1}^{\otimes2}\otimes\mathbf{e}_{2}^{\otimes2}<\mathbf{e}_{1}\otimes\mathbf{e}_{2}<\mathbf{e}_{1}\otimes\mathbf{e}_{2}^{\otimes2}<\mathbf{e}_{1}\otimes\mathbf{e}_{2}^{\otimes3}<\mathbf{e}_{2}.
\]

\end{example}
For each Lyndon word, we can associate a corresponding Lyndon element
$\mathcal{P}_{\mathbf{w}}$ inductively by $\mathcal{P}_{\mathbf{e}_{1}}=\mathbf{e}_{1},\;\mathcal{P}_{\mathbf{e}_{2}}=\mathbf{e}_{2}$
and $\mathcal{P}_{\mathbf{w}}=\left[\mathcal{P}_{\mathbf{u}},\mathcal{P}_{\mathbf{v}}\right]$
if $\mathbf{w}=\mathbf{u}\mathbf{v}$ is the standard factorisation.
By Theorem 4.9 and Theorem 5.1 in \cite{Reutenauer}, the set 
\[
\left\{ \mathcal{P}_{\mathbf{w}}:\mathbf{w}\;\mbox{is a Lyndon word}\right\} 
\]
forms a basis of $\mathcal{L}\left(\left\{ \mathbf{e}_{1},\mathbf{e}_{2}\right\} \right)$. 

We shall now state a few key properties of the Lyndon words which
we shall use.
\begin{lem}
\label{lem:Lyndon words}1. (\cite{Reutenauer}, (5.1.2))Let $\mathbf{u}<\mathbf{v}$
be two Lyndon words. Then $\mathbf{u}\otimes\mathbf{v}$ is also a
Lyndon word. 

2.(\cite{Reutenauer}, Theorem 5.1) Let $n\in\mathbb{N}$. Let $\mathbf{w}$
be a Lyndon word such that $\mathbf{w}=\mathbf{l_{1}}\ldots\mathbf{l_{n}}$,
where $\mathbf{l_{1}}\geq\mathbf{l_{2}}\geq\ldots\geq\mathbf{l_{n}}$
are Lyndon words. Then $\mathcal{P}_{\mathbf{w}}=\mathbf{w}+h.o.t$
where $h.o.t$ is a linear combination over $\mathbb{Z}$ of words
strictly greater than $\mathbf{w}$. 
\end{lem}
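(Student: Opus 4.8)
The plan is to derive both parts from the combinatorial characterization of Lyndon words already built into the definition given above: a nonempty word $\mathbf{w}$ is Lyndon if and only if $\mathbf{w}<\mathbf{s}$ for every proper right factor (suffix) $\mathbf{s}$, i.e. every $\mathbf{s}\neq1,\mathbf{w}$ with $\mathbf{w}=\mathbf{t}\otimes\mathbf{s}$ for some nonempty $\mathbf{t}$. Throughout I would use only the two elementary features of the order: a proper prefix is smaller than the whole word, and if two words first disagree at some position then the one carrying $\mathbf{e}_{1}$ there is smaller.

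For Part 1, I would set $\mathbf{w}=\mathbf{u}\otimes\mathbf{v}$ with $\mathbf{u}<\mathbf{v}$ both Lyndon and verify $\mathbf{w}<\mathbf{s}$ for each proper suffix $\mathbf{s}$. Such an $\mathbf{s}$ is either a proper suffix of $\mathbf{v}$, or equals $\mathbf{v}$, or has the form $\mathbf{u}'\otimes\mathbf{v}$ with $\mathbf{u}'$ a proper nonempty suffix of $\mathbf{u}$. The crux is the middle case $\mathbf{s}=\mathbf{v}$: I would show $\mathbf{u}\otimes\mathbf{v}<\mathbf{v}$ by splitting on whether $\mathbf{u}$ is a proper prefix of $\mathbf{v}$ (then, writing $\mathbf{v}=\mathbf{u}\otimes\mathbf{v}'$, the Lyndon property of $\mathbf{v}$ gives $\mathbf{v}<\mathbf{v}'$, and cancelling the common prefix $\mathbf{u}$ yields the claim) or $\mathbf{u},\mathbf{v}$ genuinely disagree within their common length (then that disagreement survives prepending $\mathbf{u}$). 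The suffix-of-$\mathbf{v}$ case follows from $\mathbf{w}<\mathbf{v}<\mathbf{s}$, and the $\mathbf{u}'\otimes\mathbf{v}$ case from $\mathbf{u}<\mathbf{u}'$, noting that since $\left|\mathbf{u}'\right|<\left|\mathbf{u}\right|$ the words $\mathbf{u},\mathbf{u}'$ must disagree within length $\left|\mathbf{u}'\right|$, so the inequality is preserved after appending $\mathbf{v}$.

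For Part 2 I would argue by induction on the degree of the Lyndon word $\mathbf{w}$ (for a Lyndon $\mathbf{w}$ the only non-increasing Lyndon factorization is $\mathbf{w}$ itself, so the hypothesis reduces to this case). The base cases $\mathbf{w}=\mathbf{e}_{1},\mathbf{e}_{2}$ are immediate. For $\left|\mathbf{w}\right|\geq2$ take the standard factorisation $\mathbf{w}=\mathbf{u}\otimes\mathbf{v}$; by the standard fact (Reutenauer) both $\mathbf{u},\mathbf{v}$ are Lyndon, and $\mathbf{u}<\mathbf{w}<\mathbf{v}$ (the first by prefix-smallness, the second by the Lyndon property), so the inductive hypothesis applies to both. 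Substituting $\mathcal{P}_{\mathbf{u}}=\mathbf{u}+(\text{words}>\mathbf{u})$ and $\mathcal{P}_{\mathbf{v}}=\mathbf{v}+(\text{words}>\mathbf{v})$ into $\mathcal{P}_{\mathbf{w}}=\mathcal{P}_{\mathbf{u}}\otimes\mathcal{P}_{\mathbf{v}}-\mathcal{P}_{\mathbf{v}}\otimes\mathcal{P}_{\mathbf{u}}$ gives homogeneous words of degree $\left|\mathbf{w}\right|$ with integer coefficients. In the first product the words $\mathbf{a}\otimes\mathbf{b}$ with $\mathbf{a}\geq\mathbf{u}$, $\mathbf{b}\geq\mathbf{v}$ (of the respective lengths) satisfy $\mathbf{a}\otimes\mathbf{b}\geq\mathbf{u}\otimes\mathbf{v}$ by a position-by-position comparison, with equality precisely for $\mathbf{a}=\mathbf{u},\mathbf{b}=\mathbf{v}$, producing the leading term $\mathbf{w}$ with coefficient $+1$.

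The step I expect to be the main obstacle is controlling the reversed product $\mathcal{P}_{\mathbf{v}}\otimes\mathcal{P}_{\mathbf{u}}$, whose words $\mathbf{b}\otimes\mathbf{a}$ (with $\mathbf{b}\geq\mathbf{v}$, $\mathbf{a}\geq\mathbf{u}$) begin with a block of length $\left|\mathbf{v}\right|$ rather than $\left|\mathbf{u}\right|$, so a naive comparison with $\mathbf{u}\otimes\mathbf{v}$ stalls. The resolution I would use is exactly the Lyndon property of $\mathbf{w}$: since $\mathbf{v}$ is a proper suffix of the Lyndon word $\mathbf{w}$ we have $\mathbf{w}=\mathbf{u}\otimes\mathbf{v}<\mathbf{v}\leq\mathbf{b}$, and because $\mathbf{u}\otimes\mathbf{v}$ is strictly longer than $\mathbf{b}$ this inequality cannot arise from $\mathbf{w}$ being a prefix of $\mathbf{b}$, hence it is witnessed by a genuine disagreement at some position $i\leq\left|\mathbf{v}\right|=\left|\mathbf{b}\right|$ where $\mathbf{w}$ carries the smaller letter. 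That position lies inside the leading block $\mathbf{b}$ of $\mathbf{b}\otimes\mathbf{a}$, so $\mathbf{w}<\mathbf{b}\otimes\mathbf{a}$ for every word occurring in $\mathcal{P}_{\mathbf{v}}\otimes\mathcal{P}_{\mathbf{u}}$. Thus this entire product contributes only words strictly greater than $\mathbf{w}$, and combining with the previous paragraph yields $\mathcal{P}_{\mathbf{w}}=\mathbf{w}+h.o.t.$ with $h.o.t.$ an integer combination of words $>\mathbf{w}$, closing the induction.
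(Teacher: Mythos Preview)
The paper does not give its own proof of this lemma; it simply records the two statements with citations to Reutenauer's book and moves on. So there is nothing in the paper to compare your argument against.

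That said, your proof is correct and is essentially the standard argument one finds in Reutenauer. Your handling of Part~1 is clean, and in Part~2 the point you flagged as the ``main obstacle''---controlling the reversed product $\mathcal{P}_{\mathbf{v}}\otimes\mathcal{P}_{\mathbf{u}}$---is dealt with exactly as in the classical proof: since $\mathbf{v}$ is a proper suffix of the Lyndon word $\mathbf{w}$ one has $\mathbf{w}<\mathbf{v}\leq\mathbf{b}$, and because $|\mathbf{w}|>|\mathbf{b}|$ this forces a genuine letter-disagreement within the first $|\mathbf{b}|$ positions, which then propagates to $\mathbf{w}<\mathbf{b}\otimes\mathbf{a}$. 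Your observation that the non-increasing Lyndon factorisation of a Lyndon word is trivial (so the extra hypothesis in the paper's statement of Part~2 is redundant) is also correct. The only external ingredient you invoke is that in the standard factorisation $\mathbf{w}=\mathbf{u}\otimes\mathbf{v}$ both factors are Lyndon; this is stated just above the lemma in the paper and is again a standard fact from Reutenauer.
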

From which it follows easily that: 
\begin{cor}
$\mathbf{e}_{1}^{\otimes n}\mathbf{e}_{2}^{\otimes k}$ is a Lyndon
word for all $n>0$ and $k>0$. \end{cor}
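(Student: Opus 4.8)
The plan is to deduce the claim from the first part of Lemma \ref{lem:Lyndon words} (the concatenation $\mathbf{u}\otimes\mathbf{v}$ of two Lyndon words with $\mathbf{u}<\mathbf{v}$ is again Lyndon) together with the fact that $\mathbf{e}_{1}$ and $\mathbf{e}_{2}$ are themselves Lyndon, via a two-stage induction that adjoins one generator at a time. The key point to keep in mind is that one \emph{cannot} build the word from its two blocks $\mathbf{e}_{1}^{\otimes n}$ and $\mathbf{e}_{2}^{\otimes k}$ directly: for $n\geq2$ the block $\mathbf{e}_{1}^{\otimes n}$ fails to be Lyndon, since its proper suffix $\mathbf{e}_{1}^{\otimes(n-1)}$ is strictly \emph{smaller} by the prefix rule (rule 2 of the lexicographic order), not larger. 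Hence the first part of Lemma \ref{lem:Lyndon words} does not apply to the two blocks, and the induction must instead adjoin single generators in a carefully chosen order.

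First I would fix $n=1$ and induct on $k$ to show that $\mathbf{e}_{1}\otimes\mathbf{e}_{2}^{\otimes k}$ is Lyndon. For $k=1$ this is immediate from the first part of Lemma \ref{lem:Lyndon words}, since $\mathbf{e}_{1}<\mathbf{e}_{2}$ are both Lyndon. For the inductive step I would write
\[
\mathbf{e}_{1}\otimes\mathbf{e}_{2}^{\otimes(k+1)}=\left(\mathbf{e}_{1}\otimes\mathbf{e}_{2}^{\otimes k}\right)\otimes\mathbf{e}_{2},
\]
where the left factor is Lyndon by the inductive hypothesis and $\mathbf{e}_{2}$ is Lyndon; since the left factor begins with $\mathbf{e}_{1}$ and $\mathbf{e}_{1}<\mathbf{e}_{2}$, the two factors are in strictly increasing order, so the first part of Lemma \ref{lem:Lyndon words} yields that the product is Lyndon.

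Next I would fix $k$ and induct on $n$, the base case $n=1$ being exactly the previous step (for every $k$). For the inductive step I would write
\[
\mathbf{e}_{1}^{\otimes(n+1)}\otimes\mathbf{e}_{2}^{\otimes k}=\mathbf{e}_{1}\otimes\left(\mathbf{e}_{1}^{\otimes n}\otimes\mathbf{e}_{2}^{\otimes k}\right),
\]
where the right factor is Lyndon by the inductive hypothesis and $\mathbf{e}_{1}$ is Lyndon; here $\mathbf{e}_{1}$ is a proper prefix of the right factor and is therefore strictly smaller by rule 2, so the first part of Lemma \ref{lem:Lyndon words} applies once more. This completes the double induction and proves the corollary.

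Since each invocation of Lemma \ref{lem:Lyndon words} is routine, there is essentially no obstacle here; the only thing demanding care is the order of the two inductions, forced by the asymmetry noted in the first paragraph. As a sanity check I could instead verify the statement straight from the definition, by comparing $\mathbf{e}_{1}^{\otimes n}\otimes\mathbf{e}_{2}^{\otimes k}$ with each of its proper suffixes: a suffix $\mathbf{e}_{1}^{\otimes m}\otimes\mathbf{e}_{2}^{\otimes k}$ with $m<n$ shares the prefix $\mathbf{e}_{1}^{\otimes m}$ and then the word continues with $\mathbf{e}_{1}$ while the suffix continues with $\mathbf{e}_{2}$, whereas a suffix $\mathbf{e}_{2}^{\otimes j}$ begins with $\mathbf{e}_{2}$; in either case the word is strictly smaller than the suffix, which is precisely the Lyndon condition.
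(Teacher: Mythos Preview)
Your proof is correct and follows essentially the same approach as the paper, which simply says ``Iterative use of 1.\ in Lemma~\ref{lem:Lyndon words}.'' Your two-stage induction is just a careful unpacking of that iteration, and your observation that one cannot directly apply the lemma to the two blocks $\mathbf{e}_{1}^{\otimes n}$ and $\mathbf{e}_{2}^{\otimes k}$ is a nice clarification of why the iteration must proceed one letter at a time.
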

\begin{proof}
Iterative use of 1. in Lemma \ref{lem:Lyndon words}. 
\end{proof}

\subsection{Proof of Theorem \ref{thm:winding 1}}

We first need a technical lemma which controls the $L^{q}$ norm of
the winding number. 
\begin{lem}
\label{lem:Winding integrability}Let $1\leq p<2$. Then for all $q<\frac{2}{p}$,
there exists $C_{p,q}>0$ such that for all paths $\gamma:\left[0,1\right]\rightarrow\mathbb{R}^{2}$
with finite $p$ variation,.
\[
\left\Vert \eta\left(\gamma,\cdot\right)\right\Vert _{L^{q}}\leq C_{p,q}\max\left(\left\Vert \gamma\right\Vert _{p},\left\Vert \gamma\right\Vert _{p}^{p}\right).
\]
 \end{lem}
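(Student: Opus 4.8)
The plan is to build $\eta(\gamma,\cdot)$ out of the winding numbers of small triangles organised by scale, and to bound the $L^{q}$ norm by summing a geometric series that converges exactly when $q<2/p$. Since the curve is closed, $\eta(\gamma,\cdot)$ is integer valued, vanishes off a ball of radius $\mathrm{diam}\,\gamma\le\|\gamma\|_{p}$, and $\gamma[0,1]$ has Hausdorff dimension $\le p<2$ and hence Lebesgue measure zero; so $\eta(\gamma,\cdot)$ is compactly supported and the $L^{q}$ norm is taken up to a null set. First I would set $\omega(s,t):=\|\gamma\|_{\mathcal{V}^{p}([s,t])}^{p}$, which is superadditive and, for continuous $\gamma$, continuous. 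Using the intermediate value theorem I would split each interval at the point halving $\omega$, producing nested partitions $\mathcal{P}_{k}$ of $[0,1]$ into $2^{k}$ subintervals, each with $\omega(I)\le\|\gamma\|_{p}^{p}2^{-k}$ and hence $\mathrm{diam}\,\gamma|_{I}\le\omega(I)^{1/p}\le\|\gamma\|_{p}2^{-k/p}$.

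Let $P_{k}$ be the polygon through the vertices $\{\gamma_{t}:t\in\mathcal{P}_{k}\}$. Refining from level $k$ to $k+1$ replaces each edge $[\gamma_{a},\gamma_{b}]$ by $[\gamma_{a},\gamma_{c}]$, $[\gamma_{c},\gamma_{b}]$ with $c$ the inserted point, and since the angular lift is additive under this edge replacement, $\eta(P_{k+1},\cdot)-\eta(P_{k},\cdot)=\sum_{T\in\mathcal{T}_{k}}\eta(T,\cdot)$, where $\mathcal{T}_{k}$ is the family of the $2^{k}$ triangles $T=(\gamma_{a},\gamma_{c},\gamma_{b})$, one per level-$k$ edge (this holds irrespective of self-intersections of $P_{k}$). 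Each such $T$ has all three vertices in a single arc $\gamma|_{I}$, so $\mathrm{diam}\,T\le\|\gamma\|_{p}2^{-k/p}$. Because $P_{k}\to\gamma$ uniformly (the spatial mesh tends to $0$) and $z\mapsto\eta(\cdot,z)$ is stable under uniform perturbations avoiding $z$, one obtains, for almost every $z$, the telescoping representation
\[
\eta(\gamma,z)=\sum_{k\ge0}\ \sum_{T\in\mathcal{T}_{k}}\eta(T,z).
\]

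Now each $T$ is a single (possibly degenerate) triangle, so $\eta(T,\cdot)\in\{0,\pm1\}$ and is supported on the triangle's interior, of area $\le\tfrac12\,\mathrm{diam}(T)^{2}\le\tfrac12\|\gamma\|_{p}^{2}2^{-2k/p}$; hence $\|\eta(T,\cdot)\|_{L^{q}}^{q}\le\tfrac12\|\gamma\|_{p}^{2}2^{-2k/p}$. Summing over the $2^{k}$ triangles at level $k$ and over $k$: for $q\ge1$, Minkowski's inequality gives $\|\eta(\gamma,\cdot)\|_{L^{q}}\le C\|\gamma\|_{p}^{2/q}\sum_{k\ge0}2^{k(1-2/(pq))}$, which converges precisely when $pq<2$; for $q\le1$ the elementary bound $\|\sum_{i}f_{i}\|_{L^{q}}^{q}\le\sum_{i}\|f_{i}\|_{L^{q}}^{q}$ gives $\|\eta(\gamma,\cdot)\|_{L^{q}}^{q}\le C\|\gamma\|_{p}^{2}\sum_{k\ge0}2^{k(1-2/p)}$, which converges since $p<2$. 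In either case Fatou's lemma applied to the partial sums $\eta(P_{k},\cdot)$ yields $\|\eta(\gamma,\cdot)\|_{L^{q}}\le C_{p,q}\|\gamma\|_{p}^{2/q}$ for every $q<2/p$, the claimed integrability bound (note $2/q>p\ge1$).

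I expect the main obstacle to be the rigorous justification of the telescoping representation rather than the summation, which is routine. Three points need care: the continuity of the control $\omega$ for a continuous finite-$p$-variation path, so that the halving construction, and hence the uniform convergence $P_{k}\to\gamma$, are legitimate; the almost-everywhere convergence $\eta(P_{k},z)\to\eta(\gamma,z)$, which relies on $\gamma[0,1]$ being Lebesgue-null together with the stability of the winding number under uniform approximation off the curve; and the closedness of $\gamma$, which is exactly what makes $\eta(\gamma,\cdot)$ compactly supported (and the top edge degenerate), so that the $L^{q}$ norm is finite at all. Once these are in place, the scale-by-scale area estimate $\mathrm{diam}(T)^{2}\le\|\gamma\|_{p}^{2}2^{-2k/p}$ together with the count $2^{k}$ of triangles per level does all the quantitative work, and the threshold $q<2/p$ emerges from the single inequality $pq<2$.
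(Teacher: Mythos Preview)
Your argument is correct and takes a genuinely different route from the paper. The paper proceeds by duality: it writes $\int f\,\eta(\gamma,\cdot)$ as a Young line integral of the Riesz-type potential $Tf(z)=\int \frac{z-w}{|z-w|^{2}}f(w)\,dw$ along $\gamma$, invokes the mapping property $T:L^{q'}\to\mathrm{Lip}(1-2/q')$ (Morrey) together with the Young estimate, and then passes from bounded variation to finite $p$-variation by piecewise-linear approximation and Fatou. Your approach is instead a direct multiscale decomposition: a dyadic subdivision of the control $\omega$ produces $2^{k}$ triangles at level $k$ of diameter $\lesssim\|\gamma\|_{p}2^{-k/p}$, and summing their areas gives the $L^{q}$ bound. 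This is more elementary (no potential theory, no Young integral) and handles finite $p$-variation in one shot; it is essentially a sewing/Young-type argument carried out on the winding number itself.

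Two remarks. First, the three points you flag as needing care are all standard: continuity of $\omega$ for continuous finite-$p$-variation paths is in Friz--Victoir, the a.e.\ convergence $\eta(P_{k},z)\to\eta(\gamma,z)$ follows from uniform convergence plus $\dim_{H}\gamma[0,1]\le p<2$, and closedness of $\gamma$ is implicit in the lemma (the winding number is only defined for closed curves). Second, your bound is $C_{p,q}\|\gamma\|_{p}^{2/q}$ with $2/q>p$, whereas the paper obtains $C_{p,q}\max(\|\gamma\|_{p},\|\gamma\|_{p}^{p})$; these are incomparable (yours is sharper for $\|\gamma\|_{p}\le 1$, weaker for $\|\gamma\|_{p}\ge 1$), so strictly speaking you have proved a variant of the stated inequality rather than the inequality itself. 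For the only application in the paper (uniform $L^{q}$ control of $\eta(\gamma^{\mathcal{P}_{N}},\cdot)$ in the proof of Lemma~\ref{thm:main1}) your bound is equally adequate, since $\|\gamma^{\mathcal{P}_{N}}\|_{p}\le\|\gamma\|_{p}$ is fixed.
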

\begin{proof}
First consider the case when $\gamma$ has finite total variation.
For such paths, we have the integral representation 
\[
\eta\left(\gamma,\left(x,y\right)\right)=\frac{1}{2\pi}\int_{0}^{1}\frac{\left(x_{s}-x\right)\mathrm{d}y_{s}-\left(y_{s}-y\right)\mathrm{d}x_{s}}{\left(x_{s}-x\right)^{2}+\left(y_{s}-y\right)^{2}}.
\]
Let $f\in L^{q}\left(\mathbb{R}^{2}\right)$, where $q>\frac{2}{2-p}$.
Consider the map $f\rightarrow\int_{\mathbb{R}^{2}}f\left(z\right)\eta\left(\gamma,z\right)\mathrm{d}z$.
By an interchange of integral, we have 
\begin{eqnarray*}
 &  & \int_{\mathbb{R}^{2}}f\left(x,y\right)\eta\left(\gamma,\left(x,y\right)\right)\mathrm{d}x\mathrm{d}y\\
 & = & \frac{1}{2\pi}\left(\mathbf{e}_{2}^{*}\otimes\mathbf{e}_{1}^{*}-\mathbf{e}_{1}^{*}\otimes\mathbf{e}_{2}^{*}\right)\int_{0}^{1}\left(\int_{\mathbb{R}^{2}}\frac{\gamma_{s}-\left(x,y\right)}{\left|\gamma_{s}-\left(x,y\right)\right|^{2}}f\left(x,y\right)\mathrm{d}x\mathrm{d}y\right)\otimes\mathrm{d}\gamma_{s}.
\end{eqnarray*}

The quasi-potential operator $T$ defined by 
\[
T\left(f\right)\left(z\right):=\int_{\mathbb{R}^{2}}\frac{z-\left(x,y\right)}{\left|z-\left(x,y\right)\right|^{2}}f\left(x,y\right)\mathrm{d}x\mathrm{d}y
\]
is a bounded linear operator from $L^{q}\left(\mathbb{R}^{2}\right)$
to $\mbox{Lip}\left(1-\frac{2}{q}\right)$ (See Theorem 3.7.1 in \cite{Morrey's book}). 

Note that as $q>\frac{2}{2-p}$, $2-\frac{2}{q}>p$. Therefore, 
\begin{eqnarray*}
\left|\int_{0}^{1}\left(Tf\right)\left(\gamma_{s}\right)\otimes\mathrm{d}\gamma_{s}\right| & \leq & C_{p,q}\left\Vert \left(Tf\right)\left(\cdot\right)\right\Vert _{\mbox{Lip}\left(2-\frac{2}{q}\right)}\max\left(\left\Vert \gamma\right\Vert _{p}^{p},\left\Vert \gamma\right\Vert _{p}\right).
\end{eqnarray*}

Therefore, the map 
\[
f\rightarrow\int_{\mathbb{R}^{2}}f\left(x,y\right)\eta\left(\gamma,\left(x,y\right)\right)\mathrm{d}x\mathrm{d}y
\]
is a bounded linear functional on $L^{q}$ and 

\[
\left|\int_{\mathbb{R}^{2}}f\left(x,y\right)\eta\left(\gamma,\left(x,y\right)\right)\mathrm{d}x\mathrm{d}y\right|\leq C_{p,q}\left\Vert f\right\Vert _{q}\max\left(\left\Vert \gamma\right\Vert _{p}^{p},\left\Vert \gamma\right\Vert _{p}\right).
\]

This means for all paths $\gamma$ with bounded total variation, and
all $q>\frac{2}{2-p}$, or $q^{\prime}<\frac{2}{p}$, 
\[
\left\Vert \eta\left(\gamma,\cdot\right)\right\Vert _{L^{q^{\prime}}}\leq C_{p,q}\max\left(\left\Vert \gamma\right\Vert _{p}^{p},\left\Vert \gamma\right\Vert _{p}\right)
\]
where $C_{p,q}$ is a constant independent of $\gamma$. 

Let $\gamma$ now be a path with finite $p$ variation, where $p<2$.
Let $\mathcal{P}$ be any piecewise linear interpolation of $\gamma$.
Then 
\begin{eqnarray*}
\left\Vert \eta\left(\gamma^{\mathcal{P}},\cdot\right)\right\Vert _{L^{q^{\prime}}} & \leq & C_{p,q}\max\left(\left\Vert \gamma^{\mathcal{P}}\right\Vert _{p}^{p},\left\Vert \gamma^{\mathcal{P}}\right\Vert _{p}\right)\\
 & \leq & C_{p,q}\max\left(\left\Vert \gamma\right\Vert _{p}^{p},\left\Vert \gamma\right\Vert _{p}\right).
\end{eqnarray*}

Let $\mathcal{P}_{n}$ be a sequence of partitions such that $\left\Vert \mathcal{P}_{n}\right\Vert \rightarrow0$
as $n\rightarrow\infty$. Then by Fatou's Lemma, 
\begin{eqnarray*}
\left\Vert \eta\left(\gamma,\cdot\right)\right\Vert _{L^{q^{\prime}}} & \leq & \lim_{n\rightarrow\infty}\left\Vert \eta\left(\gamma^{\mathcal{P}_{n}},\cdot\right)\right\Vert _{L^{q^{\prime}}}\\
 & \leq & C_{p,q}\max\left(\left\Vert \gamma\right\Vert _{p}^{p},\left\Vert \gamma\right\Vert _{p}\right).
\end{eqnarray*}

\end{proof}
A key idea in proving Theorem \ref{thm:winding 1} lies in the fact
that the coefficients of some Lyndon basis elements can be reduced
to a single line integral, as illustrated by the following lemma. 
\begin{lem}
\label{thm:main1}Let $1\leq p<2$. Let $\gamma:\left[0,1\right]\rightarrow\mathbb{R}^{2}$
be a continuous closed curve with finite $p$ variation. Let $\eta\left(\gamma,\left(x,y\right)\right)$
denote the winding number of $\gamma$ around $x\mathbf{e}_{1}+y\mathbf{e}_{2}$.

Then for all $n,k\geq0$, 
\begin{equation}
\mathbf{e}_{1}^{*\otimes\left(n+1\right)}\otimes\mathbf{e}_{2}^{*\otimes\left(k+1\right)}\left(S\left(\gamma\right)_{0,1}\right)=\frac{\left(-1\right)^{k}}{n!k!}\int_{\mathbb{R}^{2}}x^{n}y^{k}\eta\left(\gamma-\gamma_{0},\left(x,y\right)\right)\mathrm{d}x\mathrm{d}y.\label{eq:Green's main}
\end{equation}
\end{lem}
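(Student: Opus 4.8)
The plan is to establish (\ref{eq:Green's main}) first for closed curves of bounded total variation --- in particular for the piecewise linear interpolations of $\gamma$, where the Green's theorem (\ref{eq:green1-2}) is available --- and then to pass to the limit using the approximation results of Lemmas \ref{lem:cont2} and \ref{lem:approx3}. Throughout we may assume $\gamma_{0}=0$: both sides are unchanged under the translation $\gamma\mapsto\gamma-\gamma_{0}$, since the signature depends only on the increments $\mathrm{d}\gamma$ and the winding number on the right-hand side is already taken around $\gamma-\gamma_{0}$.

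\emph{Bounded variation case, left-hand side.} For $\gamma$ of bounded variation I would evaluate the iterated integral directly. Because in the word $\mathbf{e}_{1}^{\otimes(n+1)}\otimes\mathbf{e}_{2}^{\otimes(k+1)}$ every $\mathbf{e}_{1}$-letter precedes every $\mathbf{e}_{2}$-letter, I integrate the leading $\mathbf{e}_{1}$-block up to the first $\mathbf{e}_{2}$-time using $\int_{0<t_{1}<\cdots<t_{n+1}<s}\mathrm{d}\gamma^{(1)}_{t_{1}}\cdots\mathrm{d}\gamma^{(1)}_{t_{n+1}}=(\gamma^{(1)}_{s})^{n+1}/(n+1)!$, and then the trailing $\mathbf{e}_{2}$-block after it using $\int_{s<s_{1}<\cdots<s_{k}<1}\mathrm{d}\gamma^{(2)}_{s_{1}}\cdots\mathrm{d}\gamma^{(2)}_{s_{k}}=(\gamma^{(2)}_{1}-\gamma^{(2)}_{s})^{k}/k!$. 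Since $\gamma$ is closed, $\gamma^{(2)}_{1}=\gamma^{(2)}_{0}=0$, so the trailing block contributes $(-1)^{k}(\gamma^{(2)}_{s})^{k}/k!$, and I obtain
\[
\mathbf{e}_{1}^{*\otimes(n+1)}\otimes\mathbf{e}_{2}^{*\otimes(k+1)}\left(S(\gamma)_{0,1}\right)=\frac{(-1)^{k}}{(n+1)!\,k!}\int_{0}^{1}(\gamma^{(1)}_{s})^{n+1}(\gamma^{(2)}_{s})^{k}\,\mathrm{d}\gamma^{(2)}_{s}.
\]

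\emph{Bounded variation case, right-hand side.} I then apply the Green's theorem (\ref{eq:green1-2}) with $f(x,y)=x^{n+1}y^{k}/(n+1)$ and $g\equiv0$, so that $\partial_{x}f+\partial_{y}g=x^{n}y^{k}$. This gives
\[
\int_{\mathbb{R}^{2}}x^{n}y^{k}\,\eta(\gamma,(x,y))\,\mathrm{d}x\,\mathrm{d}y=\frac{1}{n+1}\int_{0}^{1}(\gamma^{(1)}_{s})^{n+1}(\gamma^{(2)}_{s})^{k}\,\mathrm{d}\gamma^{(2)}_{s},
\]
and multiplying by $(-1)^{k}/(n!\,k!)$ reproduces exactly the expression from the previous step. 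Hence (\ref{eq:Green's main}) holds for every closed curve of bounded total variation.

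\emph{Passage to finite $p$-variation.} I would take partitions $\mathcal{P}_{m}$ containing $0$ and $1$ with $\|\mathcal{P}_{m}\|\to0$ and apply the bounded variation case to the interpolations $\gamma^{\mathcal{P}_{m}}$, which are again closed. Their left-hand sides converge to $\mathbf{e}_{1}^{*\otimes(n+1)}\otimes\mathbf{e}_{2}^{*\otimes(k+1)}(S(\gamma)_{0,1})$ by Lemma \ref{lem:approx3}. The delicate point, which I expect to be the main obstacle, is the convergence of the right-hand sides. Three ingredients make it work: (i) by Lemma \ref{lem:cont2} the $\gamma^{\mathcal{P}_{m}}$ converge uniformly, so their images eventually lie in a fixed bounded set and the functions $\eta(\gamma^{\mathcal{P}_{m}},\cdot)$ all vanish off one compact set $K$, on which the weight $x^{n}y^{k}$ is bounded; (ii) since $p<2$ the image $\gamma[0,1]$ has Hausdorff dimension at most $p$, hence Lebesgue measure zero, and for any $z\notin\gamma[0,1]$ the straight-line homotopy between $\gamma$ and $\gamma^{\mathcal{P}_{m}}$ avoids $z$ once $\|\gamma-\gamma^{\mathcal{P}_{m}}\|_{\infty}<\mathrm{dist}(z,\gamma[0,1])$, so homotopy invariance of the winding number gives $\eta(\gamma^{\mathcal{P}_{m}},z)\to\eta(\gamma,z)$ for almost every $z$; (iii) fixing $q$ with $1<q<2/p$, Lemma \ref{lem:Winding integrability} together with $\|\gamma^{\mathcal{P}_{m}}\|_{p}\leq\|\gamma\|_{p}$ bounds $\|\eta(\gamma^{\mathcal{P}_{m}},\cdot)\|_{L^{q}}$ uniformly in $m$, which yields uniform integrability over $K$. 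Combining (i)--(iii) through the Vitali convergence theorem gives $\eta(\gamma^{\mathcal{P}_{m}},\cdot)\to\eta(\gamma,\cdot)$ in $L^{1}(K)$, whence $\int_{\mathbb{R}^{2}}x^{n}y^{k}\eta(\gamma^{\mathcal{P}_{m}},\cdot)\to\int_{\mathbb{R}^{2}}x^{n}y^{k}\eta(\gamma,\cdot)$. Passing to the limit in the identity for $\gamma^{\mathcal{P}_{m}}$ then completes the proof.
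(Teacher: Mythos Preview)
Your proof is correct and follows essentially the same route as the paper: reduce the iterated integral to a single line integral $\frac{(-1)^{k}}{(n+1)!\,k!}\int_{0}^{1}(\gamma^{(1)}_{s})^{n+1}(\gamma^{(2)}_{s})^{k}\,\mathrm{d}\gamma^{(2)}_{s}$, match it with the winding-number integral via Green's theorem (\ref{eq:green1-2}), and then pass from piecewise linear interpolations to $\gamma$ using Lemma \ref{lem:approx3} on the left and Lemma \ref{lem:Winding integrability} on the right. Your limiting argument is in fact more explicit than the paper's (which simply appeals to ``$L^{q}$ convergence theorems''): you spell out the common compact support, the a.e.\ pointwise convergence of $\eta(\gamma^{\mathcal{P}_{m}},\cdot)$ to $\eta(\gamma,\cdot)$ via homotopy invariance and the null-measure of $\gamma[0,1]$, and the uniform $L^{q}$ bound that yields uniform integrability, before invoking Vitali.
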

\begin{proof}
We first prove the lemma for paths with bounded total variation.

Let $\gamma^{\left(1\right)}$and $\gamma^{\left(2\right)}$ be the
first and second coordinate components of $\gamma$ respectively.

Recall that for all $n,k\geq0$, 
\[
\mathbf{e}_{1}^{*\otimes\left(n+1\right)}\otimes\mathbf{e}_{2}^{*\otimes\left(k+1\right)}\left(S\left(\gamma\right)_{0,1}\right)=\int_{\triangle_{n+k+2}\left(0,1\right)}\mathrm{d}\gamma_{s_{1}}^{\left(1\right)}\ldots\mathrm{d}\gamma_{s_{n+1}}^{\left(1\right)}\mathrm{d}\gamma_{s_{n+2}}^{\left(2\right)}\ldots\mathrm{d}\gamma_{s_{n+k+2}}^{\left(2\right)}.
\]

The key idea here is to integrate with respect to $\gamma^{\left(1\right)}$s
first and then integrate the $\gamma^{\left(2\right)}$s. For all
$n,k\geq0$, 
\begin{eqnarray*}
 &  & \mathbf{e}_{1}^{*\otimes\left(n+1\right)}\otimes\mathbf{e}_{2}^{*\otimes\left(k+1\right)}\left(S\left(\gamma\right)_{0,1}\right)\\
 & = & \int...\int_{0<t_{1}<..<t_{n+1}<s_{1}<...<s_{k+1}<1}\mathrm{d}\gamma_{t_{1}}^{\left(1\right)}...\mathrm{d}\gamma_{t_{n+1}}^{\left(1\right)}\mathrm{d}\gamma_{s_{1}}^{\left(2\right)}...\mathrm{d}\gamma_{s_{k+1}}^{\left(2\right)}\\
 & = & \int_{0<s_{1}<...<s_{k+1}<1}\frac{1}{n!}\left(\gamma_{s_{1}}^{\left(1\right)}-\gamma_{0}^{\left(1\right)}\right)^{n+1}\mathrm{d}\gamma_{s_{1}}^{\left(2\right)}...\mathrm{d}\gamma_{s_{k+1}}^{\left(2\right)}\\
 & = & \int_{0}^{1}\int_{s_{1}}^{1}...\int_{s_{k-1}}^{1}\int_{s_{k}}^{1}\frac{1}{n!}\left(\gamma_{s_{1}}^{\left(1\right)}-\gamma_{0}^{\left(1\right)}\right)^{n+1}\mathrm{d}\gamma_{s_{k+1}}^{\left(2\right)}...\mathrm{d}\gamma_{s_{1}}^{\left(2\right)}\;\mbox{by Fubini's theorem}\\
 & = & \frac{1}{\left(n+1\right)!}\frac{1}{k!}\int_{0}^{1}\left(\gamma_{s_{1}}^{\left(1\right)}-\gamma_{0}^{\left(1\right)}\right)^{n+1}\left(\gamma_{1}^{\left(2\right)}-\gamma_{s_{1}}^{\left(2\right)}\right)^{k}\mathrm{d}\gamma_{s_{1}}^{\left(2\right)}\\
 & = & \frac{1}{n!}\frac{1}{k!}\int_{\mathbb{R}^{2}}\left(x-\gamma_{0}^{\left(1\right)}\right)^{n}\left(\gamma_{1}^{\left(2\right)}-y\right)^{k}\eta\left(\gamma,\left(x,y\right)\right)\mathrm{d}x\mathrm{d}y\;\mbox{by }(\ref{eq:green1-2})\\
 & = & \frac{\left(-1\right)^{k}}{n!k!}\int_{\mathbb{R}^{2}}x^{n}y^{k}\eta\left(\gamma-\gamma_{0},\left(x,y\right)\right)\mathrm{d}x\mathrm{d}y.
\end{eqnarray*}
where in the last two steps we have used the fact that $\gamma$ is
a closed curve. 

Now for $\gamma$ with finite $p$ variation, for each $N\in\mathbb{N}$,
let $\mathcal{P}_{N}$ denote a sequence of partitions of $\left[0,1\right]$
such that $\left\Vert \mathcal{P}_{N}\right\Vert \rightarrow0$ as
$N\rightarrow\infty$. Then by what we just proved, 
\begin{equation}
\mathbf{e}_{1}^{*\otimes\left(n+1\right)}\otimes\mathbf{e}_{2}^{*\otimes\left(k+1\right)}\left(S\left(\gamma^{\mathcal{P}_{n}}\right)_{0,1}\right)=\frac{\left(-1\right)^{k}}{n!k!}\int_{\mathbb{R}^{2}}x^{n}y^{k}\eta\left(\gamma^{\mathcal{P}_{N}}-\gamma_{0},\left(x,y\right)\right)\mathrm{d}x\mathrm{d}y.\label{eq:Green's main proof 2}
\end{equation}

We will now take limit as $N\rightarrow\infty$. The left hand side
of (\ref{eq:Green's main proof 2}) converges to $S\left(\gamma\right)_{0,1}$
by Lemma \ref{lem:approx3}. 

To show that the right hand side of (\ref{eq:Green's main proof 2})
converges to 
\[
\frac{\left(-1\right)^{k}}{n!k!}\int_{\mathbb{R}^{2}}x^{n}y^{k}\eta\left(\gamma-\gamma_{0},\left(x,y\right)\right)\mathrm{d}x\mathrm{d}y
\]
, note that by Lemma \ref{lem:Winding integrability}, if we take
$1<q<\frac{2}{p}$, 
\[
\left\Vert \eta\left(\gamma^{\mathcal{P}_{N}},\cdot\right)\right\Vert _{L^{q}}\leq C_{p,q}\max\left(\left\Vert \gamma^{\mathcal{P}_{N}}\right\Vert _{p},\left\Vert \gamma^{\mathcal{P}_{N}}\right\Vert _{p}^{p}\right)\leq C_{p,q}\max\left(\left\Vert \gamma\right\Vert _{p},\left\Vert \gamma\right\Vert _{p}^{p}\right)
\]
and the convergence follows from $L^{q}$ convergence theorems. 
\end{proof}
\textit{Update} : (Jul 2014) Instead of proving the integrability
of winding number, we could also have used the generalised Green's
theorem in Proposition 4.6 in \cite{Zust11}. We are grateful to R.
Züst for pointing this out to us. 

We will now give a proof of Theorem \ref{thm:winding 1}.

\selectlanguage{british}%
\begin{proof}[Proof of Theorem 1]Let $n,k\geq0$ and $N\geq n+k+2$.
If we equip the alphabet $\left\{ \mathbf{e}_{1},\mathbf{e}_{2}\right\} $
with the ordering $\mathbf{e}_{1}<\mathbf{e}_{2}$, then by Lemma
\ref{lem:Lyndon words}, $\mathbf{e}_{1}{}^{\otimes n}\otimes\mathbf{e}_{2}{}^{\otimes k}$
is a Lyndon word as defined in section 3.1. Let $\mathcal{P}_{\mathbf{e}_{1}^{\otimes n}\otimes\mathbf{e}_{2}^{\otimes k}}$
denote the corresponding Lyndon element. By Lemma \ref{thm:main1},
it suffices to prove that for all $n,k\geq0$ and $N\geq n+k+2$,
\begin{align*}
 & \mathcal{P}_{\mathbf{e}_{1}{}^{\otimes n+1}\otimes\mathbf{e}_{2}{}^{\otimes k+1}}^{*}\left(\log S_{N}\left(\gamma\right)_{0,1}\right)=\mathbf{e}_{1}^{*\otimes n+1}\otimes\mathbf{e}_{2}^{*\otimes k+1}\left(S_{N}\left(\gamma\right)_{0,1}\right).
\end{align*}

We will first prove that for closed curve $\gamma$, for all $n\geq0$,$k\geq0$,
\begin{equation}
\mathbf{e}_{1}^{*\otimes n+1}\otimes\mathbf{e}_{2}^{*\otimes k+1}\left(\left(\log S\left(\gamma\right)_{0,1}\right)^{\otimes j}\right)=0\label{eq:too much change}
\end{equation}
for $j\geq2$. 

First note that as $\gamma$ is a closed curve 
\begin{equation}
\mathbf{e}_{1}^{*}\left(\log S\left(\gamma\right)_{0,1}\right)=\mathbf{e}_{2}^{*}\left(\log S\left(\gamma\right)_{0,1}\right)=0.\label{eq:no first erm}
\end{equation}

If we denote the coefficient of a word $w$ in a polynomial $\mathcal{P}$
by $\left(\mathcal{P},w\right)$, then for all $n,k\geq0$,

\begin{eqnarray*}
 &  & \mathbf{e}_{1}^{*\otimes n+1}\otimes\mathbf{e}_{2}^{*\otimes k+1}\left(\left(\log S\left(\gamma\right)_{0,1}\right)^{\otimes j}\right)\\
 & = & \sum_{w_{1}\ldots w_{j}=\mathbf{e}_{1}^{\otimes n+1}\otimes\mathbf{e}_{2}^{\otimes k+1}}\left(\pi^{\left(n+k+2\right)}\left(\log S\left(\gamma\right)_{0,1}\right),w_{1}\right)\ldots\left(\pi^{\left(n+k+2\right)}\left(\log S\left(\gamma\right)_{0,1}\right),w_{j}\right).
\end{eqnarray*}

For each ordered collection of words $w_{1},\ldots,w_{j}$ satisfying
$w_{1},\ldots,w_{j}=\mathbf{e}_{1}^{\otimes n+1}\otimes\mathbf{e}_{2}^{\otimes k+1}$,
then at least one of $w_{1},\ldots,w_{j}$ will be of the form $\mathbf{e}_{i}^{\otimes l}$
where $i=1$ or $2$ for some $l\geq1$. Denote this word by $w^{\prime}$.
As $\pi^{\left(n+k+2\right)}\left(\log S\left(\gamma\right)_{0,1}\right)$
is a Lie polynomial and the first degree term of $\log S\left(\gamma\right)_{0,1}$is
zero (see (\ref{eq:no first erm})), 
\[
\left(\pi^{\left(n+k+2\right)}\left(\log S\left(\gamma\right)_{0,1}\right),w^{\prime}\right)=0
\]
which proves (\ref{eq:too much change}). 

Therefore, for all $n,k\geq0$, 
\begin{eqnarray*}
\mathbf{e}_{1}^{*\otimes n+1}\otimes\mathbf{e}_{2}^{*\otimes k+1}\left(S\left(\gamma\right)_{0,1}\right) & = & \mathbf{e}_{1}^{*\otimes n+1}\otimes\mathbf{e}_{2}^{*\otimes k+1}\left(\log S\left(\gamma\right)_{0,1}\right).
\end{eqnarray*}

Suppose we now expand $\pi^{\left(n+k+2\right)}\left(\log S\left(\gamma\right)_{0,1}\right)$
in terms of Lyndon words $\sum_{\mbox{Lyndon words }\mathbf{h}}\mathcal{P}_{\mathbf{h}}^{*}\circ\pi^{\left(n+k+2\right)}\left(\log S\left(\gamma\right)_{0,1}\right)\mathcal{P}_{\mathbf{h}}$,
then for all $n,k\geq0$,

\begin{eqnarray*}
\mathbf{e}_{1}^{*\otimes n+1}\otimes\mathbf{e}_{2}^{*\otimes k+1}\left(S\left(\gamma\right)_{0,1}\right) & = & \sum_{\mbox{Lyndon words }\mathbf{h}}\mathcal{P}_{\mathbf{h}}^{*}\circ\pi_{N}\left(\log S\left(\gamma\right)_{0,1}\right)\mathbf{e}_{1}^{*\otimes n+1}\otimes\mathbf{e}_{2}^{*\otimes k+1}\left(\mathcal{P}_{\mathbf{h}}\right).
\end{eqnarray*}

By definition, $\mathbf{e}_{1}^{*\otimes n+1}\otimes\mathbf{e}_{2}^{*\otimes k+1}\left(\mathcal{P}_{\mathbf{h}}\right)$
will be non-zero only if the word $\mathbf{h}$ contains $n+1$ letters
$\mathbf{e}_{1}$ and $k+1$ letters $\mathbf{e}_{2}$. If $\mathbf{h}$
contains $n+1$ $\mathbf{e}_{1}$s and $k+1$ $\mathbf{e}_{2}$s,
then by Lemma \ref{lem:Lyndon words}, 
\begin{equation}
\mathcal{P}_{\mathbf{h}}=\mathbf{h}+\mathbb{Z}-\mbox{ linear combination of }\mbox{words greater than }\mathbf{h}.\label{eq:Lyndon propert}
\end{equation}

However, $\mathbf{e}_{1}^{\otimes n+1}\otimes\mathbf{e}_{2}^{\otimes k+1}$
is the smallest word amongst all words with $n+1$ $\mathbf{e}_{1}$s
and $k+1$ $\mathbf{e}_{2}$s. Therefore, if $\mathbf{h}\neq\mathbf{e}_{1}^{*\otimes n+1}\otimes\mathbf{e}_{2}^{*\otimes k+1}$,
then the right hand side of (\ref{eq:Lyndon propert}) will only contain
words strictly greater than $\mathbf{e}_{1}^{\otimes n+1}\otimes\mathbf{e}_{2}^{\otimes k+1}$
and in particular will not contain the word $\mathbf{e}_{1}^{\otimes n+1}\otimes\mathbf{e}_{2}^{\otimes k+1}$.
Therefore, for all $n,k\geq0$ 
\[
\mathbf{e}_{1}^{*\otimes n+1}\otimes\mathbf{e}_{2}^{*\otimes k+1}\left(\mathcal{P}_{\mathbf{h}}\right)=0\;\mbox{if }\mathbf{h}\neq\mathbf{e}_{1}^{\otimes n+1}\otimes\mathbf{e}_{2}^{\otimes k+1}.
\]

Therefore, for all $n,k\geq0$, 
\[
\mathbf{e}_{1}^{*\otimes n+1}\otimes\mathbf{e}_{2}^{*\otimes k+1}\left(S\left(\gamma\right)_{0,1}\right)=\mathcal{P}_{\mathbf{e}_{1}^{\otimes n+1}\otimes\mathbf{e}_{2}^{\otimes k+1}}^{*}\circ\pi_{N}\left(\log S\left(\gamma\right)_{0,1}\right).
\]
\end{proof}

\selectlanguage{english}%
We now prove Corollary \ref{cor:winding 2}. 

\begin{proof}[Proof of Corollary 2]

\selectlanguage{british}%
In Example \ref{Lyndon words level 4}, we listed the Lyndon words
of length less than or equal to $4$. The corresponding Lyndon elements
for the free Lie algebra generated by the alphabet $\left\{ \mathbf{e}_{1},\mathbf{e}_{2}\right\} $
is 
\begin{align}
 & \begin{array}{c}
\mathbf{e}_{1},\left[\mathbf{e}_{1},\left[\mathbf{e}_{1},\left[\mathbf{e}_{1},\mathbf{e}_{2}\right]\right]\right],\left[\mathbf{e}_{1},\left[\mathbf{e}_{1},\mathbf{e}_{2}\right]\right],\left[\mathbf{e}_{1},\left[\left[\mathbf{e}_{1},\mathbf{e}_{2}\right],\mathbf{e}_{2}\right]\right],\left[\mathbf{e}_{1},\mathbf{e}_{2}\right]\\
,\left[\left[\mathbf{e}_{1},\mathbf{e}_{2}\right],\mathbf{e}_{2}\right],\left[\left[\left[\mathbf{e}_{1},\mathbf{e}_{2}\right],\mathbf{e}_{2}\right],\mathbf{e}_{2}\right],\mathbf{e}_{2}
\end{array}\label{eq: Hall basis 4}
\end{align}

To prove Corollary \foreignlanguage{english}{\ref{cor:winding 2},
it is sufficient to express, for each of the above Lyndon elements
$f$, the coefficient of $f$ in $\log S\left(\gamma\right)_{0,1}$
in terms of the winding number of $\gamma$. }

\selectlanguage{english}%
As $\gamma$ is a closed curve, $\mathbf{e}_{i}^{*}\left(\log\left(S\left(\gamma\right)_{0,1}\right)\right)=0$
for $i=1,2$. 

By Theorem \ref{thm:winding 1}, 
\begin{align}
\left[\mathbf{e}_{1},\mathbf{e}_{2}\right]^{*}\circ\pi_{4}\left(\log S\left(\gamma\right)_{0,1}\right) & =\int_{\mathbb{R}^{2}}\eta\left(\gamma-\gamma_{0},\left(x,y\right)\right)\mathrm{d}x\mathrm{d}y\nonumber \\
\left[\mathbf{e}_{1},\left[\mathbf{e}_{1},\mathbf{e}_{2}\right]\right]^{*}\circ\pi_{4}\left(\log S\left(\gamma\right)_{0,1}\right) & =\int_{\mathbb{R}^{2}}x\eta\left(\gamma-\gamma_{0},\left(x,y\right)\right)\mathrm{d}x\mathrm{d}y\nonumber \\
\left[\left[\mathbf{e_{1},e_{2}}\right],\mathbf{e}_{2}\right]^{*}\circ\pi_{4}\left(\log S\left(\gamma\right)_{0,1}\right) & =-\int_{\mathbb{R}^{2}}y\eta\left(\gamma-\gamma_{0},\left(x,y\right)\right)\mathrm{d}x\mathrm{d}y\nonumber \\
\left[\mathbf{e}_{1},\left[\mathbf{e}_{1},\left[\mathbf{e_{1},e_{2}}\right]\right]\right]^{*}\circ\pi_{4}\left(\log S\left(\gamma\right)_{0,1}\right) & =\frac{1}{2}\int_{\mathbb{R}^{2}}x^{2}\eta\left(\gamma-\gamma_{0},\left(x,y\right)\right)\mathrm{d}x\mathrm{d}y\nonumber \\
\left[\mathbf{e}_{1},\left[\left[\mathbf{e_{1},e_{2}}\right],\mathbf{e}_{2}\right]\right]^{*}\circ\pi_{4}\left(\log S\left(\gamma\right)_{0,1}\right) & =-\int_{\mathbb{R}^{2}}xy\eta\left(\gamma-\gamma_{0},\left(x,y\right)\right)\mathrm{d}x\mathrm{d}y\nonumber \\
\left[\left[\left[\mathbf{e_{1},e_{2}}\right],\mathbf{e}_{2}\right],\mathbf{e}_{2}\right]^{*}\circ\pi_{4}\left(\log S\left(\gamma\right)_{0,1}\right) & =\frac{1}{2}\int_{\mathbb{R}^{2}}y^{2}\eta\left(\gamma-\gamma_{0},\left(x,y\right)\right)\mathrm{d}x\mathrm{d}y.\label{eq:4th term}
\end{align}
\end{proof}

\subsection{Sharpness of Corollary \ref{cor:winding 2}}

\selectlanguage{british}%
The purpose of this section is to prove the following sharpness compliment
to Corollary \foreignlanguage{english}{\ref{cor:winding 2}. }
\begin{prop}
\label{prop:sharpness}There exist two paths $\gamma,\tilde{\gamma}$
such that the winding numbers of $\gamma$ and $\tilde{\gamma}$ around
every point are equal, but the fifth term of their signature differs. \end{prop}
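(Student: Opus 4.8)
The plan is to produce an explicit pair of curves by exploiting the one distinguishing feature identified in the introduction: the Lyndon word $\mathbf{e}_{1}\otimes\mathbf{e}_{2}\otimes\mathbf{e}_{1}\otimes\mathbf{e}_{2}^{\otimes2}$ of degree $5$, which is the smallest Lyndon word not of the form $\mathbf{e}_{1}^{\otimes n}\otimes\mathbf{e}_{2}^{\otimes k}$. By Theorem \ref{thm:winding 1}, every coefficient of a Lyndon element of the form $\mathcal{P}_{\mathbf{e}_{1}^{\otimes(n+1)}\otimes\mathbf{e}_{2}^{\otimes(k+1)}}$ in $\log S(\gamma)_{0,1}$ is a moment of the winding number, so two closed curves with identical winding numbers agree on all such coefficients. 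Thus the signatures can only differ through the coefficient of a Lyndon element \emph{not} of this special form, and the first such element lives in degree $5$. The idea is therefore to find $\gamma,\tilde\gamma$ sharing all winding numbers while differing in the coefficient of $\mathcal{P}_{\mathbf{e}_{1}\otimes\mathbf{e}_{2}\otimes\mathbf{e}_{1}\otimes\mathbf{e}_{2}^{\otimes2}}$.

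The cleanest construction I would try is to take a closed curve $a$ and a closed curve $b$, both based at $0$, and set $\gamma=a\star b$ and $\tilde\gamma=b\star a$, as already flagged in the remark following Theorem \ref{thm:winding 1}. Since winding number is additive under concatenation of loops and insensitive to the order of traversal of two loops at a common base point, $\gamma$ and $\tilde\gamma$ have the same winding number around every point $z$; this should be verified directly from the lift definition (\ref{eq:defn winding angle}), using that the total angular increment of $a\star b$ equals that of $b\star a$. For the signatures, Chen's identity gives $S(\gamma)_{0,1}=S(a)\otimes S(b)$ and $S(\tilde\gamma)_{0,1}=S(b)\otimes S(a)$, so $\log S(\gamma)-\log S(\tilde\gamma)$ is governed by the Baker--Campbell--Hausdorff bracket corrections, the leading nontrivial mismatch being a multiple of a nested commutator. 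I would choose $a$ and $b$ to be simple explicit loops (for instance small circles or polygonal loops supported in different coordinate directions) whose low-order log-signatures $\log S(a),\log S(b)$ are easy to compute up to degree $5$, and then compute the coefficient of $\mathbf{e}_{1}\otimes\mathbf{e}_{2}\otimes\mathbf{e}_{1}\otimes\mathbf{e}_{2}^{\otimes2}$ in each of $\log S(a)\otimes S(b)$ and $\log S(b)\otimes S(a)$.

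The concrete steps, in order, are: (1) verify winding-number equality of $a\star b$ and $b\star a$ from the definition; (2) record $\log S(a)$ and $\log S(b)$ through degree $5$ for the chosen explicit loops; (3) use the BCH formula to write $\log S(\gamma)-\log S(\tilde\gamma)$ through degree $5$ as an explicit Lie polynomial in the brackets of $\log S(a)$ and $\log S(b)$; and (4) extract the coefficient on the fifth-degree Lyndon basis and check it is nonzero, which forces $\pi_{5}(S(\gamma))\neq\pi_{5}(S(\tilde\gamma))$ and hence distinct fifth signature terms. The main obstacle I anticipate is step (3)--(4): I must pick $a$ and $b$ so that all their winding numbers coincide after reordering (which the concatenation trick guarantees for free) while the relevant degree-$5$ bracket coefficient genuinely survives, rather than cancelling. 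It would be prudent to choose $a,b$ so that their degree-$2$ components $[\mathbf{e}_{1},\mathbf{e}_{2}]$-parts (enclosed areas) are nonzero and generically positioned, since the mismatch term is built from iterated brackets of these; the computation reduces to a finite BCH bracket expansion, which is tedious but routine once the loops are fixed.
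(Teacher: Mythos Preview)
Your approach is essentially the paper's: construct $\gamma=a\star b$ and $\tilde\gamma=b\star a$ from two closed loops based at $0$, use additivity to get equal winding numbers, and detect a degree-$5$ discrepancy via Chen's identity. The paper carries this out with $a$ the unit square in the first quadrant and $b$ the unit square in the third quadrant (both traced positively), so the common winding function is $1_{[0,1]^2\cup[-1,0]^2}$.

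The one noteworthy difference is in the detection step. Rather than invoking BCH and extracting a Lyndon coefficient in $\log S$, the paper works directly in $S$ and evaluates the single linear functional $\mathbf{e}_1^{*}\otimes\mathbf{e}_2^{*}\otimes\mathbf{e}_1^{*}\otimes\mathbf{e}_2^{*}\otimes\mathbf{e}_1^{*}$ on the explicit products $e^{\mathbf{e}_1}e^{\mathbf{e}_2}e^{-\mathbf{e}_1}e^{-\mathbf{e}_2}e^{-\mathbf{e}_1}e^{-\mathbf{e}_2}e^{\mathbf{e}_1}e^{\mathbf{e}_2}$ and its reordered version. Because this word is square-free, only the first-order terms of each exponential contribute, and a five-line combinatorial count gives $+1$ versus $-1$. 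This sidesteps the BCH expansion entirely and avoids having to identify which Lyndon coefficient carries the discrepancy. Your route would also succeed, but note your caution about cancellation is genuine: if you take $a$ and $b$ to be circles centred at $0$, their degree-$3$ log-signature components vanish by symmetry and the degree-$5$ BCH bracket $[A_2,B_3]+[A_3,B_2]$ dies; you need loops with nonzero first moments of winding number (off-centre circles, or squares as in the paper).
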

\begin{proof}
Let $\mathbf{e}_{i}$ denote the path $t\rightarrow t\mathbf{e}_{i}$,
$t\in\left[0,1\right]$ and let 
\[
\gamma=\mathbf{\mathbf{e}_{1}\star\mathbf{e}_{2}\star-\mathbf{e}_{1}\star}\mathbf{-e_{2}\star-e_{1}\star-e_{2}\star e_{1}\star e_{2}}
\]
and 
\[
\tilde{\gamma}=\mathbf{-e_{1}\star-e_{2}\star e_{1}\star e_{2}}\star\mathbf{e}_{1}\star\mathbf{e}_{2}\star-\mathbf{e}_{1}\star\mathbf{-e}_{2}.
\]
where $\star$ denotes the concatenation operation on paths. 

By Theorem \ref{thm:Jordan winding} and the additivity of the winding
number with respect to the concatenation product, 
\[
\eta\left(\gamma,\left(x,y\right)\right)=1_{\left[0,1\right]\times\left[0,1\right]\cup\left[-1,0\right]\times\left[-1,0\right]}\left(x,y\right)=\eta\left(\tilde{\gamma},\dot{\left(x,y\right)}\right).
\]

\selectlanguage{english}%
By a direct calculation, we see that the signature of $\mathbf{e}_{i}$
is 
\[
e^{\mathbf{e}_{i}}.
\]

Therefore, by Chen's identity, 

\selectlanguage{british}%
\begin{eqnarray}
S\left(\gamma\right)_{0,1} & = & e^{\mathbf{e}_{1}}e^{\mathbf{e}_{2}}e^{-\mathbf{e}_{1}}e^{-\mathbf{e}_{2}}e^{-\mathbf{e}_{1}}e^{-\mathbf{e}_{2}}e^{\mathbf{e}_{1}}e^{\mathbf{e}_{2}}\label{eq:s gamma}
\end{eqnarray}
and 
\begin{equation}
S\left(\tilde{\gamma}\right)_{0,1}=e^{-\mathbf{e}_{1}}e^{-\mathbf{e}_{2}}e^{\mathbf{e}_{1}}e^{\mathbf{e}_{2}}e^{\mathbf{e}_{1}}e^{\mathbf{e}_{2}}e^{-\mathbf{e}_{1}}e^{-\mathbf{e}_{2}}.\label{eq:s gamma h}
\end{equation}

We claim that 
\[
\mathbf{e}_{1}^{*}\otimes\mathbf{e}_{2}^{*}\otimes\mathbf{e}_{1}^{*}\otimes\mathbf{e}_{2}^{*}\otimes\mathbf{e}_{1}^{*}\left(S\left(\gamma\right)_{0,1}\right)=1
\]
and 
\[
\mathbf{e}_{1}^{*}\otimes\mathbf{e}_{2}^{*}\otimes\mathbf{e}_{1}^{*}\otimes\mathbf{e}_{2}^{*}\otimes\mathbf{e}_{1}^{*}\left(S\left(\tilde{\gamma}\right)_{0,1}\right)=-1.
\]

Note that the word $\mathbf{e}_{1}\otimes\mathbf{e}_{2}\otimes\mathbf{e}_{1}\otimes\mathbf{e}_{2}\otimes\mathbf{e}_{1}$
is ``square-free'', i.e. none of the letters in the word is identical
to the letter on its immediate left or right. This means the contribution
to the value of both 
\[
\mathbf{e}_{1}^{*}\otimes\mathbf{e}_{2}^{*}\otimes\mathbf{e}_{1}^{*}\otimes\mathbf{e}_{2}^{*}\otimes\mathbf{e}_{1}^{*}\left(S\left(\gamma\right)_{0,1}\right)
\]
and 
\[
\mathbf{e}_{1}^{*}\otimes\mathbf{e}_{2}^{*}\otimes\mathbf{e}_{1}^{*}\otimes\mathbf{e}_{2}^{*}\otimes\mathbf{e}_{1}^{*}\left(S\left(\tilde{\gamma}\right)_{0,1}\right)
\]
only comes from the first order term in exponentials in (\ref{eq:s gamma})
and (\ref{eq:s gamma h}). For both, the contribution can only come
in one of the following five combinations: 

Combination 1. $1^{st},2^{nd},3^{rd},4^{th},5^{th}$ exponentials. 

Combination 2. $1^{st},2^{nd},3^{rd},4^{th},7^{th}$ exponentials. 

Combination 3. $1^{st},2^{nd},3^{rd},6^{th},7^{th}$ exponentials.

Combination 4. $1^{st},2^{nd},5^{rd},6^{th},7^{th}$ exponentials.

Combination 5. $1^{st},4^{nd},5^{rd},6^{th},7^{th}$ exponentials.

For $S\left(\gamma\right)_{0,1}$, the contributions from Combination
1 and Combination 5 is $-1$, while the contribution from Combination
$2$ to $4$ is $1$. Therefore, 
\begin{eqnarray*}
 &  & \mathbf{e}_{1}^{*}\otimes\mathbf{e}_{2}^{*}\otimes\mathbf{e}_{1}^{*}\otimes\mathbf{e}_{2}^{*}\otimes\mathbf{e}_{1}^{*}\left(S\left(\gamma\right)_{0,1}\right)\\
 & = & -1+1+1+1-1\\
 & = & 1.
\end{eqnarray*}

For $S\left(\gamma^{\prime}\right)_{0,1}$, the contributions from
Combination 1 and Combination 5 is $1$, while the contribution from
Combination $2-4$ is $-1$. Therefore,
\begin{eqnarray*}
 &  & \mathbf{e}_{1}^{*}\otimes\mathbf{e}_{2}^{*}\otimes\mathbf{e}_{1}^{*}\otimes\mathbf{e}_{2}^{*}\otimes\mathbf{e}_{1}^{*}\left(S\left(\tilde{\gamma}\right)_{0,1}\right)\\
 & = & 1-1-1-1+1\\
 & = & -1.
\end{eqnarray*}

\end{proof}

\subsection{``tree-like'' paths and winding number}
\selectlanguage{english}%
\begin{prop}
\label{lem:tree-like winding}Let $1\leq p<2$. If a two dimensional
path $\gamma$ with finite $p$-variation has trivial signature then
$\gamma$ is closed and has winding number zero around all points
$\left(x,y\right)$ in $\mathbb{R}^{2}\backslash\gamma\left[0,1\right]$. \end{prop}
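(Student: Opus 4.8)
The plan is to extract closedness of $\gamma$ from the degree-one part of the signature, feed the vanishing of all higher terms into Lemma \ref{thm:main1} to annihilate every polynomial moment of the winding number, and then use a Weierstrass approximation argument to force the winding number itself to vanish.

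First I would observe that a trivial signature means $S\left(\gamma\right)_{0,1}=\mathbf{1}$, so its degree-one component is zero: $\mathbf{e}_i^*\left(S\left(\gamma\right)_{0,1}\right)=\gamma_1^{\left(i\right)}-\gamma_0^{\left(i\right)}=0$ for $i=1,2$. Hence $\gamma_1=\gamma_0$, i.e.\ $\gamma$ is closed; this is the first assertion and, crucially, it licenses the use of Lemma \ref{thm:main1}. That lemma then gives, for every $n,k\geq0$,
\[
\frac{\left(-1\right)^{k}}{n!\,k!}\int_{\mathbb{R}^{2}}x^{n}y^{k}\,\eta\left(\gamma-\gamma_{0},\left(x,y\right)\right)\,\mathrm{d}x\,\mathrm{d}y=\mathbf{e}_{1}^{*\otimes\left(n+1\right)}\otimes\mathbf{e}_{2}^{*\otimes\left(k+1\right)}\left(S\left(\gamma\right)_{0,1}\right)=0,
\]
the last equality because $S\left(\gamma\right)_{0,1}=\mathbf{1}$. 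Writing $f:=\eta\left(\gamma-\gamma_0,\cdot\right)$, this says that every polynomial moment $\int_{\mathbb{R}^2}x^ny^kf$ vanishes.

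Two features of $f$ make this decisive. First, $f$ is supported in a bounded set, since points far from the curve lie in the unbounded component of $\mathbb{R}^2\setminus\left(\gamma-\gamma_0\right)\left[0,1\right]$, where the winding number is $0$. Second, $f\in L^1$: as $p<2$ we have $2/p>1$, so Lemma \ref{lem:Winding integrability} applies with $q=1$ (being a translate of $\eta\left(\gamma,\cdot\right)$, the function $f$ has the same norm). Now fix a closed ball $\bar B$ containing the support of $f$. For any $g\in C\left(\bar B\right)$, Stone--Weierstrass supplies polynomials $P_m\to g$ uniformly on $\bar B$, whence
\[
\left|\int_{\bar B}gf\right|=\left|\int_{\bar B}\left(g-P_m\right)f\right|\leq\left\Vert g-P_m\right\Vert_\infty\,\left\Vert f\right\Vert_{L^1}\longrightarrow0,
\]
so $\int_{\bar B}gf=0$ for all continuous $g$, and therefore $f=0$ almost everywhere. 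Since the winding number is locally constant on the open set $\mathbb{R}^2\setminus\gamma\left[0,1\right]$, it equals a single constant on each connected component; a nonzero constant on such a positive-measure component would contradict $f=0$ a.e., so the winding number is identically zero off the curve. (The integrals above are genuinely supported on the domain of $f$ because a finite $p$-variation curve with $p<2$ has image of Hausdorff dimension at most $p$, hence Lebesgue-null.)

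I expect the one real obstacle to be this moment-to-vanishing step, which relies on having compact support and $L^1$ integrability for $f$ simultaneously; both are available, the first geometrically and the second from Lemma \ref{lem:Winding integrability}, so the remainder is routine.
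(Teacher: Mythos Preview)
Your proof is correct and follows the same skeleton as the paper's: triviality of the degree-one term gives closedness, Lemma~\ref{thm:main1} then kills all polynomial moments of the winding number, integrability from Lemma~\ref{lem:Winding integrability} lets you conclude $\eta=0$ a.e., and local constancy upgrades this to pointwise vanishing off the image. The only difference is in the moment-to-vanishing step: the paper sums the moments into $\int e^{i(\lambda_1 x+\lambda_2 y)}\eta=0$ and invokes injectivity of the Fourier transform on $L^1$, whereas you use compact support plus Stone--Weierstrass to test against all continuous functions; both arguments exploit the same compact support (implicitly in the paper, to justify swapping sum and integral in the exponential series), so this is a cosmetic variation rather than a different route.
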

\begin{proof}
As the first term of the signature of $\gamma$ is zero, we have 
\[
\int_{0}^{1}\mathrm{d}\gamma=\gamma_{1}-\gamma_{0}=0.
\]

By Theorem \ref{thm:winding 1}, 
\[
\int_{\mathbb{R}^{2}}\frac{x^{n}y^{k}}{n!k!}\eta\left(\gamma-\gamma_{0},\left(x,y\right)\right)\mathrm{d}x\mathrm{d}y=0
\]
for all $n,k\geq0$. Therefore, 
\[
\int_{\mathbb{R}^{2}}e^{\lambda_{1}ix+\lambda_{2}iy}\eta\left(\gamma-\gamma_{0},\left(x,y\right)\right)\mathrm{d}x\mathrm{d}y=0
\]
for all $\lambda_{1},\lambda_{2}\in\mathbb{R}$. As the function $\left(x,y\right)\rightarrow\eta\left(\gamma-\gamma_{0},\left(x,y\right)\right)$
lies in $L^{1}$, we have by the injectiveness of Fourier transform
on $L^{1}$ that 
\[
\eta\left(\gamma,\left(x,y\right)+\gamma_{0}\right)=\eta\left(\gamma-\gamma_{0},\left(x,y\right)\right)=0
\]
for all $\left(x,y\right)\in\mathbb{R}^{2}$ except a Lebesgue null
set. As the function $\left(x,y\right)\rightarrow\eta\left(\gamma,\left(x,y\right)+\gamma_{0}\right)$
is locally constant on $\mathbb{R}^{2}\backslash\gamma\left[0,1\right]$,
we have 
\[
\eta\left(\gamma-\gamma_{0},\left(x,y\right)\right)=0
\]
for all $\left(x,y\right)\in\mathbb{R}^{2}\backslash\gamma\left[0,1\right]$.\end{proof}
\begin{rem}
In \cite{tree like}, it was proved that the signature of a path with
bounded total variation is trivial if and only if the path is ``tree-like''
(See Definition 1.2 in \cite{tree like}). Therefore, Proposition
\ref{lem:tree-like winding} means that a planar tree-like path has
zero winding around every point in the plane. 
\end{rem}

\begin{rem}
The converse of Proposition \ref{lem:tree-like winding} is not true.
Let $\gamma$ and $\tilde{\gamma}$ be the paths defined in the proof
of Proposition \ref{prop:sharpness} and $\eta$ be the concatenation
of $\gamma$ and the reversal of $\tilde{\gamma}$. Then by the additivity
of winding number with respect to the concatenation product, $\eta$
has zero winding number around every point. As the signatures of $\gamma$
and $\tilde{\gamma}$ are different, we have by Chen's identity that
the signature of $\eta$ is not $\mathbf{1}$. Therefore, $\eta$
does not have trivial signature.
\end{rem}
\selectlanguage{british}%

\section{Uniqueness of signature}

\selectlanguage{english}%

\subsection{Proof of Theorem \ref{thm:jordan1}}

Let $p\geq1$. For elements $\gamma$ and $\tilde{\gamma}$ in $\mathcal{V}^{p}\left(\left[0,T_{2}\right],\mathbb{R}^{d}\right)$
and $\mathcal{V}^{p}\left(\left[0,T_{1}\right],\mathbb{R}^{d}\right)$,
define a concatenation product $\star$$:$$\mathcal{V}^{p}\left(\left[0,T_{2}\right],\mathbb{R}^{d}\right)$$\times$$\mathcal{V}^{p}\left(\left[0,T_{1}\right],\mathbb{R}^{d}\right)$
$\rightarrow$ $\mathcal{V}^{p}\left(\left[0,T_{1}+T_{2}\right],\mathbb{R}^{d}\right)$
by 
\begin{eqnarray*}
\gamma\star\tilde{\gamma}\left(u\right) & := & \gamma\left(u\right),\quad u\in\left[0,T_{1}\right],\\
\gamma\star\tilde{\gamma}\left(u\right) & := & \tilde{\gamma}\left(u-T_{1}\right)+\gamma\left(T_{1}\right)-\tilde{\gamma}\left(0\right),\quad u\in\left[T_{1},T_{1}+T_{2}\right]
\end{eqnarray*}

\selectlanguage{british}%
Before proving our main result, we need just two more technical lemmas.
The first one is a simple consequence of the Jordan curve theorem. 
\begin{lem}
\label{lem:winding image}Let $p<2$. Let $\gamma$ and $\tilde{\gamma}$
be two simple curves with finite $p$ variation such that $\gamma_{0}=\tilde{\gamma}_{0}$,
$\gamma_{1}=\tilde{\gamma}_{1}$ and $\eta\left(\tilde{\gamma}\star\overleftarrow{\gamma},\left(x,y\right)\right)=0$
for all $\left(x,y\right)\in\mathbb{R}^{2}\backslash\left(\gamma\left[0,1\right]\cup\tilde{\gamma}\left[0,1\right]\right)$.
Then $\gamma\left[0,1\right]=\tilde{\gamma}\left[0,1\right]$. \end{lem}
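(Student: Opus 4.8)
The plan is to argue by contradiction and to manufacture, from any discrepancy between the two images, a Jordan curve whose interior detects a nonzero winding number of the loop $L:=\tilde{\gamma}\star\overleftarrow{\gamma}$. Observe first that $L$ is a continuous closed curve based at $\gamma_{0}=\tilde{\gamma}_{0}$, and that interchanging $\gamma$ and $\tilde{\gamma}$ replaces $L$ by $\overleftarrow{L}$, hence negates every winding number; so the hypothesis is symmetric in $\gamma,\tilde{\gamma}$ and I may assume there is a point $q\in\gamma\left[0,1\right]\setminus\tilde{\gamma}\left[0,1\right]$. Since the shared endpoints lie on both images, $q=\gamma_{s_{0}}$ with $s_{0}\in\left(0,1\right)$.

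First I would isolate a \emph{free} sub-arc of $\gamma$. The set $C:=\gamma^{-1}(\tilde{\gamma}\left[0,1\right])$ is closed, contains $0$ and $1$, and omits $s_{0}$; putting $a:=\sup\left(C\cap\left[0,s_{0}\right]\right)$ and $b:=\inf\left(C\cap\left[s_{0},1\right]\right)$ gives $a<s_{0}<b$ with $\gamma_{a},\gamma_{b}\in\tilde{\gamma}\left[0,1\right]$ while $\gamma\left(\left(a,b\right)\right)\cap\tilde{\gamma}\left[0,1\right]=\emptyset$. Writing $\gamma_{a}=\tilde{\gamma}_{a'}$ and $\gamma_{b}=\tilde{\gamma}_{b'}$ (with $a'\neq b'$ because $\gamma$ is injective), the two simple arcs $\gamma|_{\left[a,b\right]}$ and $\tilde{\gamma}|_{\left[a',b'\right]}$ meet only at the common endpoints $\gamma_{a},\gamma_{b}$, so $\sigma:=\gamma|_{\left[a,b\right]}\star\overleftarrow{\tilde{\gamma}|_{\left[a',b'\right]}}$ is a Jordan curve. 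By Theorem \ref{thm:Jordan winding} its interior $\mathrm{Int}\left(\sigma\right)$ is nonempty, $\eta\left(\sigma,\cdot\right)=\pm1$ there, and $\eta\left(\sigma,\cdot\right)=0$ on $\mathrm{Ext}\left(\sigma\right)$.

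The decisive step is to compare $\eta\left(L,\cdot\right)$ with $\eta\left(\sigma,\cdot\right)$ near an interior point $P:=\gamma_{s_{0}}$ of the free arc. Splitting $\tilde{\gamma}$ and $\overleftarrow{\gamma}$ at the parameters $a,b$ and $a',b'$ and using additivity of the winding number under concatenation of loops (the auxiliary connecting paths are traversed forwards and backwards and contribute nothing), one obtains, for every $z$ off all the images involved,
\[
\eta\left(L,z\right)=\eta\left(L_{1},z\right)-\eta\left(\sigma,z\right)+\eta\left(L_{3},z\right),
\]
where $L_{1}:=\tilde{\gamma}|_{\left[0,a'\right]}\star\overleftarrow{\gamma|_{\left[0,a\right]}}$ and $L_{3}:=\tilde{\gamma}|_{\left[b',1\right]}\star\overleftarrow{\gamma|_{\left[b,1\right]}}$ are closed loops. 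Because $\gamma$ is simple, $\gamma\left(\left[0,a\right]\right)$ and $\gamma\left(\left[b,1\right]\right)$ are compact sets not containing $P$, and $\tilde{\gamma}\left[0,1\right]$ is a closed set avoiding $P$; hence the images of $L_{1}$ and $L_{3}$ stay a positive distance $2\rho$ from $P$, so $\eta\left(L_{1},\cdot\right)$ and $\eta\left(L_{3},\cdot\right)$ are constant on the ball $B\left(P,\rho\right)$, and on this ball the images of both $L$ and $\sigma$ reduce to the single $\gamma$-arc through $P$. Choosing $z_{+}\in\mathrm{Int}\left(\sigma\right)\cap B\left(P,\rho\right)$ and $z_{-}\in\mathrm{Ext}\left(\sigma\right)\cap B\left(P,\rho\right)$ (both exist since $P\in\partial\,\mathrm{Int}\left(\sigma\right)\cap\partial\,\mathrm{Ext}\left(\sigma\right)$, and both automatically avoid the four images), the constant terms cancel and
\[
\eta\left(L,z_{+}\right)-\eta\left(L,z_{-}\right)=-\left(\eta\left(\sigma,z_{+}\right)-\eta\left(\sigma,z_{-}\right)\right)=\mp1\neq0,
\]
contradicting the assumption $\eta\left(L,\cdot\right)\equiv0$. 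Therefore $\gamma\left[0,1\right]=\tilde{\gamma}\left[0,1\right]$.

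I expect the main obstacle to be the bookkeeping behind the decomposition identity: one must track orientations and basepoints while reorganising the traversal of $L$ into the loops $L_{1},\overleftarrow{\sigma},L_{3}$, and verify that the reordering and the discarded connector paths leave every winding number unchanged, so that additivity genuinely applies (this rests on the commutativity of winding number, i.e.\ on $H_{1}\left(\mathbb{R}^{2}\setminus\left\{ z\right\} \right)\cong\mathbb{Z}$ being abelian). The geometric input that makes the local estimate work---that $L_{1}$ and $L_{3}$ cannot approach $P$---relies entirely on the simplicity of $\gamma$ together with the defining property $\gamma\left(\left(a,b\right)\right)\cap\tilde{\gamma}\left[0,1\right]=\emptyset$ of the free arc, and it is this isolation that lets the Jordan curve $\sigma$ transfer its unavoidable $\pm1$ jump onto $L$.
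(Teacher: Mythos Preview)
Your proof is correct and follows essentially the same approach as the paper: isolate a free sub-arc of one curve that avoids the other, close it with the corresponding piece of the other curve to form a Jordan loop, and use that the remaining pieces of $L$ stay a positive distance from the chosen point on the free arc so that the Jordan loop's $\pm 1$ winding jump transfers to $L$. The only cosmetic difference is that you decompose $L$ explicitly into the three loops $L_{1},\overleftarrow{\sigma},L_{3}$, whereas the paper packages everything except the Jordan loop into a single ``complement'' loop and invokes connectedness of its complement directly; your explicit remark on the symmetry of the hypothesis in $\gamma,\tilde{\gamma}$ is in fact a point the paper leaves implicit.
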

\selectlanguage{english}%
\begin{proof}
Assume for contradiction that there exists a $t\in\left(0,1\right)$
such that $\tilde{\gamma}_{\sigma}\notin\gamma\left[0,1\right]$.
Let 
\begin{eqnarray*}
s & := & \inf\left\{ \tau\leq\sigma:\tilde{\gamma}\left[\tau,\sigma\right]\cap\gamma\left[0,1\right]=\emptyset\right\} \\
t & := & \sup\left\{ \tau\geq\sigma:\tilde{\gamma}\left[\sigma,\tau\right]\cap\gamma\left[0,1\right]=\emptyset\right\} .
\end{eqnarray*}

Then $\tilde{\gamma}_{s},\tilde{\gamma}_{t}\in\gamma\left[0,1\right]$
and $s<\sigma<t$. Let $u,v\in\left[0,1\right]$ be such that $\gamma_{u}=\tilde{\gamma}_{s}$
and $\gamma_{v}=\tilde{\gamma}_{t}$. As $\gamma$ and $\tilde{\gamma}$
are both simple, then either $\tilde{\gamma}|_{\left[s,t\right]}\star\gamma|_{\left[u,v\right]}$
or $\tilde{\gamma}|_{\left[s,t\right]}\star\overleftarrow{\gamma|_{\left[u,v\right]}}$
is a simple closed curve. This shows that there exists a simple curve
$\xi$ starting from $\tilde{\gamma}_{s}$ and ending at $\tilde{\gamma}_{t}$
such that $\tilde{\gamma}|_{\left[s,t\right]}\star\overleftarrow{\xi}$
is a simple closed curve. 

By the Jordan curve theorem $\tilde{\gamma}_{\sigma}$ lies in both
the closure of the interior and the closure of the exterior of $\tilde{\gamma}|_{\left[s,t\right]}\star\overleftarrow{\xi}$.
Therefore, for any $\varepsilon>0$, the Euclidean ball centred at
$\tilde{\gamma}_{\sigma}$ with radius $\varepsilon$ contains a point
$x_{\varepsilon}$ in the interior of $\tilde{\gamma}|_{\left[s,t\right]}\star\overleftarrow{\xi}$
and a point $y_{\varepsilon}$ in the exterior of $\tilde{\gamma}|_{\left[s,t\right]}\star\overleftarrow{\xi}$.
Therefore, 
\begin{equation}
\left|\eta\left(\tilde{\gamma}|_{\left[s,t\right]}\star\overleftarrow{\xi},x_{\varepsilon}\right)-\eta\left(\tilde{\gamma}|_{\left[s,t\right]}\star\overleftarrow{\xi},y_{\varepsilon}\right)\right|=1.\label{eq:winding neq}
\end{equation}
 By taking $\varepsilon$ small, $x_{\varepsilon}$ and $y_{\varepsilon}$
will be in the same connected component of 
\[
\mathbb{R}^{2}\backslash\left(\tilde{\gamma}\left[0,s\right]\cup\xi\left[0,1\right]\cup\tilde{\gamma}\left[t,1\right]\cup\gamma\left[0,1\right]\right).
\]
Therefore, 
\begin{eqnarray}
 &  & \eta\left(\tilde{\gamma}|_{\left[0,s\right]}\star\xi\star\tilde{\gamma}|_{\left[t,1\right]}\star\gamma,x_{\varepsilon}\right)\nonumber \\
 & = & \eta\left(\tilde{\gamma}|_{\left[0,s\right]}\star\xi\star\tilde{\gamma}|_{\left[t,1\right]}\star\gamma,y_{\varepsilon}\right).\label{eq:winding eq}
\end{eqnarray}

Combining (\ref{eq:winding neq}) and (\ref{eq:winding eq}) and using
the additivity of winding number we have 
\[
\left|\eta\left(\tilde{\gamma}\star\overleftarrow{\gamma},x_{\varepsilon}\right)-\eta\left(\tilde{\gamma}\star\overleftarrow{\gamma},y_{\varepsilon}\right)\right|=1,
\]
 which is a contradiction.
\end{proof}
The second technical lemma states that the image of a simple curve
determines the curve. 
\selectlanguage{british}%
\begin{lem}
\label{lem:simple image}Let $\gamma$ and $\tilde{\gamma}$ be simple
curves such that $\gamma_{0}=\tilde{\gamma}_{0}$ and $\gamma_{1}=\tilde{\gamma}_{1}$.
If $\gamma\left[0,1\right]=\tilde{\gamma}\left[0,1\right]$, then
there exists a continuous strictly increasing function $r\left(t\right)$
such that 
\[
\gamma_{r\left(t\right)}=\tilde{\gamma}_{t}
\]
for all $t\in\left[0,1\right]$. \end{lem}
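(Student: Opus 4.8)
The plan is to exploit the fact that a simple curve on a compact interval is a homeomorphism onto its image, and then to build the reparametrisation as the composition of one curve's inverse with the other.

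First I would observe that $\gamma:\left[0,1\right]\to\mathbb{R}^{2}$ is a continuous injection defined on the compact set $\left[0,1\right]$, taking values in the Hausdorff space $\mathbb{R}^{2}$. A continuous bijection from a compact space onto a Hausdorff space has continuous inverse, so $\gamma$ restricts to a homeomorphism from $\left[0,1\right]$ onto its image $K:=\gamma\left[0,1\right]$, and likewise $\tilde{\gamma}$ is a homeomorphism from $\left[0,1\right]$ onto $\tilde{\gamma}\left[0,1\right]$. Since by hypothesis $\gamma\left[0,1\right]=\tilde{\gamma}\left[0,1\right]=K$, the map $r:=\gamma^{-1}\circ\tilde{\gamma}$ is well defined on $\left[0,1\right]$, takes values in $\left[0,1\right]$, and is a homeomorphism of $\left[0,1\right]$ onto itself, being a composition of homeomorphisms. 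By construction $\gamma_{r\left(t\right)}=\tilde{\gamma}_{t}$ for all $t\in\left[0,1\right]$, which is exactly the required identity, so it remains only to check the monotonicity and continuity claims for $r$.

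It then remains to show that $r$ is strictly increasing rather than merely a homeomorphism. A continuous bijection of the interval $\left[0,1\right]$ is automatically strictly monotone, so $r$ is either strictly increasing or strictly decreasing. To rule out the decreasing case I would invoke the endpoint hypotheses: since $\tilde{\gamma}_{0}=\gamma_{0}$ and $\gamma$ is injective, the relation $\gamma_{r\left(0\right)}=\tilde{\gamma}_{0}=\gamma_{0}$ forces $r\left(0\right)=0$, and similarly $\tilde{\gamma}_{1}=\gamma_{1}$ forces $r\left(1\right)=1$. A strictly decreasing homeomorphism of $\left[0,1\right]$ would instead send $0$ to $1$, so $r$ must be strictly increasing, completing the argument.

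There is no serious obstacle here, as the argument is purely topological. The only point requiring care is the continuity of the inverse map $\gamma^{-1}$, which rests essentially on the compactness of $\left[0,1\right]$; without compactness a continuous injection need not be a homeomorphism onto its image, and the composition $r$ might fail to be continuous. The endpoint matching is precisely what pins down the orientation and excludes the orientation-reversing reparametrisation, so I would be careful to state it explicitly rather than treat it as automatic.
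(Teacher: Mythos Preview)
Your proof is correct and follows essentially the same route as the paper: define $r=\gamma^{-1}\circ\tilde{\gamma}$, argue that it is a continuous bijection of $[0,1]$ and hence strictly monotone, then use the endpoint conditions $r(0)=0$, $r(1)=1$ to force it to be increasing. If anything, your version is slightly more careful than the paper's in invoking the compact-to-Hausdorff argument to justify continuity of $\gamma^{-1}$.
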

\selectlanguage{english}%
\begin{proof}
Let $\gamma^{-1}$ denote the inverse of the function $t\rightarrow\gamma_{t}$,
which exists as $\gamma$ is a simple curve. 

Define a function $r:\left[0,1\right]\rightarrow\left[0,1\right]$
by $r\left(t\right)=\gamma^{-1}\circ\tilde{\gamma}\left(t\right)$.

As both $\gamma$ and $\tilde{\gamma}$ are injective continuous functions
and $\gamma\left[0,1\right]=\tilde{\gamma}\left[0,1\right]$, thus
$r$ is a bijective continuous function from $\left[0,1\right]$ to
$\left[0,1\right]$. Hence it is monotone. 

But $\gamma_{0}=\tilde{\gamma}_{0}$, $\gamma_{1}=\tilde{\gamma}_{1}$,
so $r\left(0\right)=0$ and $r\left(1\right)=1$. Hence $r$ is a
strictly increasing function and the result follows. 
\end{proof}
We now prove Theorem \ref{thm:jordan1}. 

\begin{proof}[Proof of Theorem 3]

The only if direction follows from the invariance of signature under
translation and reparametrisation.

Let $\gamma,\tilde{\gamma}$ be simple curves such that $S\left(\gamma\right)_{0,1}=S\left(\tilde{\gamma}\right)_{0,1}$.
Let $\hat{\gamma}=\tilde{\gamma}+\gamma_{0}-\tilde{\gamma}_{0}$,
and so $\hat{\gamma}_{0}=\gamma_{0}$. By the translation invariance
of signature, $S\left(\hat{\gamma}\right)_{0,1}=S\left(\gamma\right)_{0,1}$.
We want to show that $\hat{\gamma}$ and $\gamma$ are reparametrisations
of each other. 

By Chen's identity, 
\[
S\left(\hat{\gamma}\star\overleftarrow{\gamma}\right)_{0,1}=\mathbf{1}.
\]
Since $\gamma,\hat{\gamma}$ are simple curves, we have by Proposition
\ref{lem:tree-like winding} that 
\[
\eta\left(\hat{\gamma}\star\overleftarrow{\gamma},\left(x,y\right)\right)=0
\]
for all $\left(x,y\right)\in\mathbb{R}^{2}\backslash\hat{\gamma}\star\overleftarrow{\gamma}\left[0,1\right]$. 

Therefore, by Lemma \ref{lem:winding image} and Lemma \ref{lem:simple image},
$\hat{\gamma}$ is a reparametrisation of $\gamma$.\end{proof}

\section{Uniqueness of signature for Schramm-Loewner Evolution}

Let $\left(\Omega,\mathcal{F},\left(\mathcal{F}_{t}\right)_{t\geq0},\mathbb{P}\right)$
be a filtered probability space. Let $\left(B_{t}:t\geq0\right)$
be a one-dimensional standard Brownian motion. Let $0<\kappa$. Let
$z\in\overline{\mathbb{H}}\backslash\left\{ 0\right\} $. For each
$\omega\in\Omega$, consider the initial value problem:

\begin{equation}
\frac{\mathrm{d}g_{t}\left(z,\omega\right)}{\mathrm{d}t}=\frac{2}{g_{t}\left(z,\omega\right)-\sqrt{\kappa}B_{t}\left(\omega\right)}\quad g_{0}\left(z\right)=z\label{2.1}
\end{equation}
We shall recall the following facts about $g_{t}$ from \cite{rhode and schramm}. 
\begin{enumerate}
\item For each $\omega$, a unique solution to this equation exists up to
time $T_{z}>0$, where $T_{z}$ is the first time such that $g_{t}-\sqrt{\kappa}B_{t}\rightarrow0$
as $t\rightarrow T_{z}$. 
\item Define 
\[
H_{t}=\left\{ z\in\mathbb{H}:t<T_{z}\right\} \text{ and }K_{t}=\mathbb{H}\backslash H_{t}
\]
Then $H_{t}$ is open and simply connected.
\item For each time $t>0$, $g_{t}$ defines a conformal map from $H_{t}$
onto $\mathbb{H}$. In particular, $g_{t}$ is invertible.
\item Let $\hat{f}_{t}\left(z\right):=g_{t}^{-1}\left(z+\sqrt{\kappa}B_{t}\right)$.
There exists a $\mathbb{P}$-null set $\mathcal{N}$ such that for
all $\omega\in\mathcal{N}^{c}$, the limit 
\[
\hat{\gamma}\left(t,\omega\right):=\lim_{z\rightarrow0,z\in\mathbb{H}}\hat{f}_{t}\left(z\right)
\]
exists and $t\rightarrow\hat{\gamma}\left(t\right)$ is continuous.
The two dimensional stochastic process $\left(\hat{\gamma}_{t}:t\geq0\right)$
is called the \textit{Chordal SLE$_{\kappa}$ curve}. 
\end{enumerate}
The Loewner correspondence from a continuous path $t\rightarrow B_{t}\left(\omega\right)$
to $t\rightarrow\hat{\gamma}\left(\cdot,\omega\right)$ is in fact
deterministic and one-to-one. Therefore, the measure on the Brownian
paths induces, through this correspondence, a measure on paths in
$\overline{\mathbb{H}}$ from $0$ to $\infty$, which we shall call
the Chordal SLE$_{\kappa}$ measure in $\mathbb{H}$. 

\medskip{}

\begin{thm}
\label{prop:SLE properties}Let $\kappa\leq4$. Let $\mathbb{P}_{\kappa,\mathbb{H}}^{0,\infty}$
be the Chordal SLE$_{\kappa}$ measure in $\mathbb{H}$. Then with
probability one, the following hold: 

1.(\cite{rhode and schramm},Theorem 7.1 and Theorem 6.1)$\hat{\gamma}:[0,\infty)\rightarrow\overline{\mathbb{H}}$
satisfies $\hat{\gamma}_{0}=0$ and $\liminf_{t\rightarrow\infty}\left|\hat{\gamma}_{t}\right|=\infty$. 

2.(\cite{rhode and schramm}, Theorem 6.1)For $0\leq\kappa\leq4$,
$t\rightarrow\hat{\gamma}_{t}$ is a simple curve.
\end{thm}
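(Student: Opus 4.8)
The plan is to derive both statements directly from the Rhode--Schramm analysis of the chordal Loewner flow (\ref{2.1}), since the two assertions are precisely the content of their Theorems 6.1 and 7.1; what follows is a sketch of the mechanism behind those results rather than an independent argument, as reproving them in full would duplicate the technical core of \cite{rhode and schramm}.

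For Part 1, the identity $\hat{\gamma}_{0}=0$ is immediate from the defining limit $\hat{\gamma}\left(t\right)=\lim_{z\rightarrow0}\hat{f}_{t}\left(z\right)$ together with the initial condition $g_{0}\left(z\right)=z$. The escape to infinity rests on the fact that the half-plane capacity of the hull $K_{t}$ equals $2t$ and hence diverges as $t\rightarrow\infty$. Since this capacity bounds the Euclidean size of $K_{t}$ from below, the hull cannot remain inside any fixed compact subset of $\overline{\mathbb{H}}$; combining this growth with the continuity and transience estimates of \cite{rhode and schramm} upgrades the bound on the whole hull to the statement $\liminf_{t\rightarrow\infty}\left|\hat{\gamma}_{t}\right|=\infty$ for the tip. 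The mild subtlety here is that capacity controls the entire hull rather than its tip, so a separate argument is needed to transfer the bound to $\hat{\gamma}_{t}$ itself.

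For Part 2, the heart of the matter is the swallowing-time analysis of boundary points. Fix a real point $x>0$ and set $X_{t}=g_{t}\left(x\right)-\sqrt{\kappa}B_{t}$. Differentiating (\ref{2.1}) gives
\[
\mathrm{d}X_{t}=\frac{2}{X_{t}}\mathrm{d}t-\sqrt{\kappa}\,\mathrm{d}B_{t},
\]
so that $Y_{t}:=X_{t}/\sqrt{\kappa}$ solves $\mathrm{d}Y_{t}=\frac{2/\kappa}{Y_{t}}\mathrm{d}t-\mathrm{d}B_{t}$, which is a Bessel process of dimension $d=1+\frac{4}{\kappa}$. The condition $\kappa\leq4$ is exactly $d\geq2$, and a Bessel process of dimension at least $2$ started from a strictly positive value almost surely never reaches the origin. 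Consequently no point of $\mathbb{R}\backslash\left\{ 0\right\} $ is ever swallowed by the hull, which forces the trace to avoid both the real line and its own past, making $t\mapsto\hat{\gamma}_{t}$ simple.

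The main obstacle, and the reason these facts are quoted rather than reproved, is the passage from the probabilistic statement ``no real boundary point is swallowed'' to the geometric statement ``the trace is a simple curve.'' This requires the almost-sure existence and regularity of the trace together with control of the conformal maps $\hat{f}_{t}$ up to the boundary, which constitutes the genuinely hard analytic part of \cite{rhode and schramm} and which we simply invoke.
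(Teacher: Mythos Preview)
Your proposal is appropriate: the paper itself gives no proof of this theorem whatsoever, treating it purely as a citation to Rhode and Schramm \cite{rhode and schramm}. Your sketch of the Bessel-process mechanism for simplicity and the capacity argument for transience correctly identifies the ideas behind those cited results, and you are right to flag that the hard analytic work (existence and regularity of the trace, boundary behaviour of $\hat{f}_{t}$) is being invoked rather than reproved.
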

The fact that $\lim_{t\rightarrow\infty}\hat{\gamma}_{t}=\infty\; a.s.$
means that the signature $S\left(\hat{\gamma}\right)_{0,\infty}$
will not be defined. Therefore, we shall follow \cite{Wer12} and
opt to study the Chordal SLE$_{\kappa}$ curve in the unit disc $\mathbb{D}$,
from $-1$ to $1$. The Chordal $SLE_{\kappa}$ measure in domain
$\mathbb{D}$ with marked points $-1$ and $1$ is defined as follows:
\begin{defn}
\label{SLE}For $\kappa>0$. Let $\mathbb{P}_{\kappa,\mathbb{H}}^{0,\infty}$
be the Chordal SLE$_{\kappa}$ measure in $\mathbb{H}$, $D$ be a
simply connected subdomain of $\mathbb{C}$, $a,b\in\partial D$ and
$f$ be a conformal map from $\mathbb{H}$ to $D$, with $f\left(0\right)=a$
and $f\left(\infty\right)=b$. Then the Chordal SLE$_{\kappa}$ measure
in $D$ with marked points $a$ and $b$ is defined as the measure
$\mathbb{P}_{\kappa,\mathbb{H}}^{0,\infty}\circ f^{-1}$. \end{defn}
\begin{rem}
Although there is a one dimensional family of conformal maps $f$
such that $f$ maps $\mathbb{H}$ to $D$, $0$ to $a$ and $\infty$
to $b$, the scale invariance of the Chordal SLE measure in $\mathbb{H}$
means that the measure $\mathbb{P}_{\kappa,\mathbb{H}}^{0,\infty}\circ f^{-1}$
is the same no matter which member $f$ in this one dimensional family
we use.\end{rem}
\begin{thm}
\label{prop:SLE regularity}(\cite{Wer12}, Section 4.1) Let $0<\kappa\leq4$.
Let $\mathbb{P}_{\kappa,\mathbb{D}}^{-1,1}$ be the Chordal SLE$_{\kappa}$
measure in $\mathbb{D}$ with marked points $-1$ and $1$. Then with
probability one, $\gamma$ has finite $p$ variation for any $p>1+\frac{\kappa}{8}$.
\end{thm}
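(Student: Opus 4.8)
The plan is to prove the statement through a multiscale (dyadic) decomposition of the trace, bounding at each scale the number of annulus crossings made by the curve and summing the resulting contributions to the $p$-variation. Since the almost-sure Hausdorff (and Minkowski) dimension of the SLE$_{\kappa}$ trace is $1+\kappa/8$ (Beffara), the number of boxes of side $2^{-n}$ meeting the trace is of order $2^{n(1+\kappa/8)}$, and the threshold $p>1+\kappa/8$ is precisely the one for which the scale-by-scale $p$-variation contributions form a convergent geometric series. First I would transfer the problem to the upper half-plane: by Definition \ref{SLE}, $\mathbb{P}_{\kappa,\mathbb{D}}^{-1,1}$ is the pushforward of $\mathbb{P}_{\kappa,\mathbb{H}}^{0,\infty}$ under a conformal map $\phi:\mathbb{H}\to\mathbb{D}$ with $\phi(0)=-1$ and $\phi(\infty)=1$, so the curve $\gamma$ in $\mathbb{D}$ is $\phi\circ\hat{\gamma}$. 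On any bounded subset of $\mathbb{H}$ the map $\phi$ is Lipschitz and finite $p$-variation is preserved by Lipschitz maps; near $\infty$ the map $\phi$ contracts strongly, and using the scale invariance of $\mathbb{P}_{\kappa,\mathbb{H}}^{0,\infty}$ to cut $\hat{\gamma}$ at the successive times it reaches radius $2^{k}$ shows that the excursions to infinity contribute a summable amount after applying $\phi$. It therefore suffices to control the $p$-variation of $\hat{\gamma}$ on a bounded region, uniformly across scales.

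The heart of the argument is an \emph{annular crossing estimate}. Writing $A(z_0,r,R)=\{z:r<|z-z_0|<R\}$ and letting $N(z_0,r,R)$ be the number of crossings of this annulus by the trace, I would prove a moment bound of the form
$$\mathbb{E}\left[N(z_0,r,R)^m\right]\le C_m\left(\frac{r}{R}\right)^{\beta}$$
for every $m\in\mathbb{N}$, with an exponent $\beta=\beta(\kappa)>0$ governed by the one-point (codimension) exponent $1-\kappa/8$. The single-crossing probability bound comes from the conformal Markov property together with a martingale observable for the Loewner flow (\ref{2.1}): applying It\^o's formula to an observable built from $g_t'(z)$ and $g_t(z)-\sqrt{\kappa}B_t$, such as $|g_t'(z)|^{\lambda}|g_t(z)-\sqrt{\kappa}B_t|^{\nu}$, and choosing $\lambda,\nu$ so that the drift vanishes produces a local martingale whose expectation controls the probability that the trace comes close to $z_0$, yielding the decay in the aspect ratio $R/r$. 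Iterating this estimate over successive crossings, via the strong Markov property of the flow at the hitting times of the inner circle, promotes the one-crossing bound to the full moment bound on $N$.

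The final step is the deterministic passage from crossing counts to $p$-variation. Covering the relevant bounded region at each scale $2^{-n}$ by a locally finite family of $O(2^{2n})$ annuli of aspect ratio $2$, one observes that any increment $|\hat{\gamma}_t-\hat{\gamma}_s|$ of size comparable to $2^{-n}$ in a partition forces a crossing of some annulus at scale $n$; consequently the $p$-variation is controlled by a sum $\sum_n 2^{-np}\,\mathcal{N}_n$, where $\mathcal{N}_n$ is the total number of scale-$n$ crossings. The crossing estimate gives $\mathbb{E}[\mathcal{N}_n]\lesssim 2^{n(1+\kappa/8)}$, so $\mathbb{E}\sum_n 2^{-np}\mathcal{N}_n\lesssim\sum_n 2^{n(1+\kappa/8-p)}$, which converges for $p>1+\kappa/8$. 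The all-moments version of the crossing bound, together with a union bound over the $O(2^{2n})$ annuli at each scale and the Borel--Cantelli lemma, upgrades finiteness in expectation to an almost-sure bound on the $p$-variation.

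The main obstacle is the crossing estimate itself: one must secure the exponent $\beta(\kappa)$ \emph{uniformly} over all centres $z_0$ and all aspect ratios, and control \emph{every} moment of $N$ rather than merely its mean, since only then does the union bound over the polynomially many dyadic annuli at each scale, combined with Borel--Cantelli, yield an almost-sure statement rather than one valid only in expectation. Secondary technical points are the behaviour of $\phi$ near the two endpoints and near $\infty$, where the conformal map degenerates, and the careful verification of the deterministic lemma relating multiscale crossing counts to $p$-variation; neither of these affects the critical exponent $1+\kappa/8$.
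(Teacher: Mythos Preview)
The paper does not prove this theorem; it is stated as a citation of Werness \cite{Wer12}, Section 4.1, and is used as a black box in the proof of Theorem~\ref{thm:SLEthm}. So there is no ``paper's own proof'' to compare against.

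That said, your outline is essentially the argument Werness gives, which in turn is an adaptation of the Aizenman--Burchard tortuosity method: one proves power-law bounds on the probability of many annulus crossings and feeds these into a deterministic lemma converting multiscale crossing counts into a $p$-variation bound. The transfer between $\mathbb{H}$ and $\mathbb{D}$, the use of scale invariance to handle the behaviour near $\infty$, and the Borel--Cantelli step are all as in Werness.

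One point to tighten: your moment bound $\mathbb{E}[N(z_0,r,R)^m]\le C_m(r/R)^{\beta}$ with a \emph{single} exponent $\beta$ independent of $m$ is not the form that drives the argument. What is actually needed (and what Werness proves) is the Aizenman--Burchard hypothesis
\[
\mathbb{P}\bigl[N(z_0,r,R)\ge k\bigr]\le C_k\Bigl(\frac{r}{R}\Bigr)^{\lambda_k},\qquad \lambda_k\to\infty\ \text{as}\ k\to\infty,
\]
with the growth rate of $\lambda_k$ tied to the one-arm exponent $1-\kappa/8$. A fixed $\beta>0$ for all moments would not by itself yield the sharp threshold $p>1+\kappa/8$; it is the growth of $\lambda_k$ that pins down the exponent. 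Otherwise your sketch is sound.
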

We now prove our almost sure uniqueness theorem concerning the signature
of SLE curves. 

\begin{proof}[Proof of Theorem 4]Let $D$ be a Dini-smooth bounded
Jordan domain and $a,b\in\partial D$. Let $A$ be the set of curves
$\gamma$ such that 

1. $\gamma\left(0\right)=a$, $\gamma\left(1\right)=b$. 

2. $\gamma$ has finite $\frac{13}{8}$ variation.

3. $\gamma$ is simple. 

Let $\mathbb{P}_{\kappa,D}^{a,b}$ be the Chordal SLE$_{\kappa}$
measure in $D$ with marked points $a$ and $b$. A conformal map
from $\mathbb{D}$ to a bounded Dini-smooth Jordan domain $D$ has
bounded derivative up to the boundary. Therefore, by Theorem \ref{prop:SLE regularity},
the SLE$_{\kappa}$ curves in any bounded Dini-smooth Jordan domain
have finite $p$ variation for any $p>1+\frac{\kappa}{8}$. Moreover,
a conformal map from a Jordan domain $D$ to a Jordan domain $D^{\prime}$
is continuous and injective on $\overline{D}$. Hence by Theorem \ref{prop:SLE properties}
the SLE curve in a Jordan domain $D$ is also a simple curve. Therefore,
as $\frac{13}{8}>1+\frac{4}{8}$, $\mathbb{P}_{\kappa,D}^{a,b}\left(A^{c}\right)=0$
for all $\kappa\leq4$. 

Let $\gamma,\tilde{\gamma}\in A$ be such that $S\left(\gamma\right)_{0,1}=S\left(\tilde{\gamma}\right)_{0,1}$,
then by Theorem \ref{thm:jordan1}, $\gamma$ and $\tilde{\gamma}$
are reparametrisations of each other. \end{proof}

\section{Expected signature and $n$-point functions}

\subsection{$n$-point functions from expected signature}

We will need the following immediate consequence of the shuffle product
formula. 
\begin{lem}
\label{lem:lincom}Let $\left(k_{1},l_{1}\right),...,\left(k_{n},l_{n}\right)\in\mathbb{N}^{2}$.
Then 
\[
\Pi_{i=1}^{n}\mathbf{e}_{1}^{*\otimes k_{i}}\otimes\mathbf{e}_{2}^{*\otimes l_{i}}\left(S\left(\gamma\right)_{0,1}\right)=\mathbf{e}_{1}^{*\otimes k_{1}}\otimes\mathbf{e}_{2}^{*\otimes l_{1}}\sqcup\ldots\sqcup\mathbf{e}_{1}^{*\otimes k_{n}}\otimes\mathbf{e}_{2}^{*\otimes l_{n}}\left(S\left(\gamma\right)_{0,1}\right)
\]
where the operation $\sqcup$ is the shuffle product operation defined
in Proposition \ref{prop:shuffle-1}. \end{lem}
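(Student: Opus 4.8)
The plan is to argue by induction on $n$, using the two-word shuffle identity of Proposition \ref{prop:shuffle-1} as the engine and the linearity of the evaluation pairing $w\mapsto w(S(\gamma)_{0,1})$ together with the bilinearity and associativity of the shuffle product $\sqcup$ for the bookkeeping. The case $n=1$ is a tautology, and the case $n=2$ is exactly Proposition \ref{prop:shuffle-1} applied to the two words $\mathbf{e}_1^{*\otimes k_1}\otimes\mathbf{e}_2^{*\otimes l_1}$ and $\mathbf{e}_1^{*\otimes k_2}\otimes\mathbf{e}_2^{*\otimes l_2}$, so the content of the lemma is entirely in propagating this through the product.

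For the inductive step I would assume the identity for $n-1$ factors, write the product of $n$ real numbers as the product of the first $n-1$ with the last, and apply the inductive hypothesis to the first $n-1$. This reduces the claim to showing that
\[
\left(\sqcup_{i=1}^{n-1}\mathbf{e}_1^{*\otimes k_i}\otimes\mathbf{e}_2^{*\otimes l_i}\right)(S(\gamma)_{0,1})\cdot \mathbf{e}_1^{*\otimes k_n}\otimes\mathbf{e}_2^{*\otimes l_n}(S(\gamma)_{0,1})
\]
equals the full $n$-fold shuffle $\sqcup_{i=1}^{n}\mathbf{e}_1^{*\otimes k_i}\otimes\mathbf{e}_2^{*\otimes l_i}$ evaluated on $S(\gamma)_{0,1}$.

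The one point requiring care is that the iterated shuffle $\sqcup_{i=1}^{n-1}\mathbf{e}_1^{*\otimes k_i}\otimes\mathbf{e}_2^{*\otimes l_i}$ is no longer a single word but a finite $\mathbb{Z}$-linear combination of words, whereas Proposition \ref{prop:shuffle-1} is literally stated only for pure tensor words. I would therefore first record the evident extension of the two-word identity by linearity: since $w\mapsto w(S(\gamma)_{0,1})$ is linear in $w$ and $\sqcup$ is bilinear, for any finite combination $\sum_j c_j w_j$ and any word $u$ one has
\[
\left(\sum_j c_j\, w_j(S(\gamma)_{0,1})\right)\cdot u(S(\gamma)_{0,1})=\left(\Big(\sum_j c_j\, w_j\Big)\sqcup u\right)(S(\gamma)_{0,1}),
\]
obtained by distributing over the sum and applying Proposition \ref{prop:shuffle-1} to each term $w_j\sqcup u$. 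Applying this with the linear combination furnished by the inductive hypothesis and $u=\mathbf{e}_1^{*\otimes k_n}\otimes\mathbf{e}_2^{*\otimes l_n}$, and then invoking associativity of $\sqcup$ to rewrite $\big(\sqcup_{i=1}^{n-1}(\cdots)\big)\sqcup(\cdots)$ as $\sqcup_{i=1}^{n}(\cdots)$, closes the induction.

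I do not anticipate any genuine obstacle, since the argument is purely formal; the special shape $\mathbf{e}_1^{*\otimes k_i}\otimes\mathbf{e}_2^{*\otimes l_i}$ of the words plays no role and the identity in fact holds for arbitrary words. The only thing worth stating explicitly is the passage from the two-term shuffle formula to linear combinations above, as this is the single step used slightly beyond the verbatim statement of Proposition \ref{prop:shuffle-1}.
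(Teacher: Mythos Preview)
Your proposal is correct and is exactly the ``iterated use of Proposition \ref{prop:shuffle-1}'' that the paper's one-line proof invokes, just written out with the induction and the linearity step made explicit. There is nothing to add.
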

\begin{proof}
This follows from an iterated use of Proposition \ref{prop:shuffle-1}.
\end{proof}
A well-known observable in the theory of SLE is the following sequence
of $n$-point functions:
\begin{defn}
\label{npoint-1}Let $0<\kappa\leq4$. Let $D$ be a bounded Jordan
domain and $a,b\in\partial D$. Let $\mathbb{P}_{\kappa,D}^{a,b}$
denote the Chordal SLE$_{\kappa}$ measure on $D$ with marked points
$a,b$. Let $\Phi\left(\gamma\right)$ denote the concatenation of
$\gamma$ with the positively oriented arc in $\partial D$ from $b$
to $a$. We shall define the $n$-point function associated with the
probability measure $\mathbb{P}_{\kappa,D}^{a,b}$ to be:
\[
\Gamma_{n}\left(x_{1},y_{1},..,x_{n},y_{n}\right)=\mathbb{P}_{\kappa,D}^{a,b}\left[\left(x_{1},y_{1}\right),\ldots,\left(x_{n},y_{n}\right)\in\mbox{Int}\Phi\left(\cdot\right)\right].
\]

\end{defn}
The $n-$point functions for SLE$_{\kappa}$ curves were first studied
by O. Schramm who calculated the $1$-point function explicitly in
terms of hypergeometric functions (see \cite{n point 1}). Although
PDEs can be written down for the $n$-point functions, the analytic
expressions for general $n$ and $\kappa$ are not known. The only
exception is $n=2$, $D=\mathbb{H}$ and $\kappa=\frac{8}{3}$, which
was predicted in \cite{2 point } and computed rigorously in \cite{Bel-Vik11}. 

\begin{proof}[Proof of Theorem 5]Let $A$ be as in the proof of Theorem
\ref{thm:SLEthm}.

Let $\gamma\in A$. Let $\Phi\left(\gamma\right)$ denote the concatenation
of $\gamma$ with the positively oriented arc in $\partial D$ from
$b$ to $a$. As $\Phi\left(\gamma\right)$ is a simple closed curve,
$\eta\left(\Phi\left(\gamma\right),\left(x,y\right)\right)=1_{\mbox{Int}\Phi\left(\gamma\right)}\left(x,y\right)$.
Then by Lemma \ref{thm:main1}, we have for each $\gamma\in A$, for
all $\left(n_{1},k_{1},\ldots,n_{N},k_{N}\right)\in\mathbb{N}^{2N}$
\begin{eqnarray*}
 &  & \Pi_{i=1}^{N}\mathbf{e}_{1}^{*\otimes\left(n_{i}+1\right)}\otimes\mathbf{e}_{2}^{*\otimes\left(k_{i}+1\right)}\left(S\left(\Phi\left(\gamma\right)\right)_{0,1}\right)\\
 & = & C_{\mathbf{n}}\int_{\mathbb{R}^{2N}}\Pi_{i=1}^{N}x_{i}^{n_{i}}y_{i}^{k_{i}}1_{\left(\mbox{Int}\Phi\left(\gamma\right)\right)^{N}}\mathrm{d}x_{1}\mathrm{d}y_{1}\cdots\mathrm{d}x_{N}\mathrm{d}y_{N}
\end{eqnarray*}
where $\left(\mbox{Int}\Phi\left(\gamma\right)\right)^{n}:=\mbox{Int}\Phi\left(\gamma\right)\times\ldots\times\mbox{Int}\Phi\left(\gamma\right)$
($n$ times) and
\[
C_{\mathbf{n},\mathbf{k}}:=\Pi_{i=1}^{N}\frac{\left(-1\right)^{k_{i}}}{n_{i}!k_{i}!}.
\]

By Lemma \ref{lem:lincom}, for all $\left(n_{1},k_{1},\ldots,n_{N},k_{N}\right)\in\mathbb{N}^{2N}$,
\[
\Pi_{i=1}^{N}\mathbf{e}_{1}^{*\otimes n_{i}}\otimes\mathbf{e}_{2}^{*\otimes k_{i}}\left(S\left(\Phi\left(\gamma\right)\right)_{0,1}\right)=\mathbf{e}_{1}^{*\otimes n_{1}}\otimes\mathbf{e}_{2}^{*\otimes k_{1}}\sqcup\ldots\sqcup\mathbf{e}_{1}^{*\otimes n_{N}}\otimes\mathbf{e}_{2}^{*\otimes k_{N}}\left(S\left(\Phi\left(\gamma\right)\right)_{0,1}\right).
\]

By taking linear combinations, we have 
\begin{eqnarray*}
 &  & \int_{\mathbb{R}^{2N}}e^{\sum_{i=1}^{N}\lambda_{i}x_{i}+\mu_{i}y_{i}}\mathbb{E}_{\kappa,D}^{a,b}\left[1_{D^{N}}\right]\mathrm{d}x_{1}\cdots\mathrm{d}y_{N}\\
 & = & \sum_{n_{1},\ldots,n_{N},k_{1}\ldots k_{N}\geq0}\Pi_{i=1}^{N}\left(\lambda_{i}\right)^{n_{i}}\left(-\mu_{i}\right)^{k_{i}}\mathbf{e}_{1}^{*\otimes\left(n_{1}+1\right)}\otimes\mathbf{e}_{2}^{*\otimes\left(k_{1}+1\right)}\sqcup\ldots\\
 &  & \ldots\sqcup\mathbf{e}_{1}^{*\otimes\left(n_{N}+1\right)}\otimes\mathbf{e}_{2}^{*\otimes\left(k_{N}+1\right)}\left(\mathbb{E}\left[S\left(\Phi\left(\gamma\right)\right)_{0,1}\right]\right)
\end{eqnarray*}

The result then follows by noting $\mathbb{E}_{\kappa,D}^{a,b}\left[1_{D^{N}}\left(\cdot\right)\right]=\Gamma_{N}\left(\cdot\right)$.
\end{proof}

As we may determine the signature of $\Phi\left(\gamma\right)$ from
the signature of $\gamma$ using Chen's identity, this formula gives
a relationship between the expected signature of the Chordal SLE measure
and the $n$-point functions.

\subsection{Expected signature from $n$-point functions}

We may ask whether it is possible to obtain the expected signature
from the $n$-point functions. Unfortunately, here we can do no better
than the deterministic case and are only able to obtain an explicit
formula up to the fourth term. To obtain a simpler formula, we choose
to study the Chordal SLE$_{\kappa}$ measure on $\frac{1}{2}\left(1+\mathbb{D}\right)$
so that almost all paths start from $0$. 
\begin{lem}
\label{prop:sig of closed loop}Let $0<\kappa\leq4$. Let $\gamma$
denote the Chordal SLE$_{\kappa}$ curve from $0$ to $1$ in $\frac{1}{2}\left(1+\mathbb{D}\right)$.
Let $\Phi\left(\gamma\right)$ denote the concatenation of $\gamma$
with the upper semi-circle of the unit disc $\frac{1}{2}\left(1+\mathbb{D}\right)$,
oriented in the anti-clockwise direction. Then the level-4 truncated
expected signature of $\Phi\left(\gamma\right)$ is 
\begin{align}
 & \begin{array}{c}
1+\int_{\mathbb{D}}\left(\left[\mathbf{e}_{1},\mathbf{e}_{2}\right]+\left[\mathbf{x}_{1},\left[\mathbf{e}_{1},\mathbf{e}_{2}\right]\right]+\frac{1}{2}\left[\mathbf{x}_{1},\left[\mathbf{x}_{1},\left[\mathbf{e}_{1},\mathbf{e}_{2}\right]\right]\right]\right)\Gamma_{1}\left(\left(x_{1},y_{1}\right)\right)\mathrm{d}x_{1}\mathrm{d}y_{1}\\
+\frac{1}{2}\int_{\mathbb{R}^{4}}\left[\mathbf{e}_{1},\mathbf{e}_{2}\right]\otimes\left[\mathbf{e}_{1},\mathbf{e}_{2}\right]\Gamma_{2}\left(\left(x_{1},y_{1}\right),\left(x_{2},y_{2}\right)\right)\mathrm{d}x_{1}\mathrm{d}y_{1}\mathrm{d}x_{2}\mathrm{d}y_{2}
\end{array}\label{eq:4th levl-1-1-1}
\end{align}
where $\mathbf{x}_{1}=x_{1}\mathbf{e}_{1}+y_{1}\mathbf{e}_{2}$ and
$\mathbf{x}_{2}=x_{2}\mathbf{e}_{1}+y_{2}\mathbf{e}_{2}$, and $\Gamma_{n}$
is the $n$-point function for the Chordal SLE$_{\kappa}$ measure. \end{lem}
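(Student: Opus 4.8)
The plan is to compute the level-four truncation of $S(\Phi(\gamma))_{0,1}$ for a generic realisation of the curve, to repackage its homogeneous components using Corollary \ref{cor:winding 2}, and only at the very end to take expectations, so that the one-point moments of the winding number become $\Gamma_1$ and the single surviving quadratic term becomes $\Gamma_2$. First I would restrict to the almost-sure event on which $\gamma$ is a simple curve of finite $13/8$-variation (Theorems \ref{prop:SLE properties} and \ref{prop:SLE regularity}). On this event $\Phi(\gamma)$ is a positively oriented simple closed curve with $\Phi(\gamma)_0=0$, so Theorem \ref{thm:Jordan winding} gives $\eta(\Phi(\gamma),(x,y))=1_{\mathrm{Int}\Phi(\gamma)}(x,y)$. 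Writing $L:=\log S(\Phi(\gamma))_{0,1}$ and $L_j$ for its degree-$j$ component, the curve being closed forces $L_1=0$ (cf. \eqref{eq:no first erm}); expanding $S=\exp(L)=\sum_{n\ge0}\tfrac{1}{n!}L^{\otimes n}$ and noting that $L^{\otimes n}$ has degree at least $2n$, I obtain $\pi_4 S(\Phi(\gamma))_{0,1}=1+L_2+L_3+L_4+\tfrac12 L_2\otimes L_2$, since $L^{\otimes 3}$ already has degree $6$ and only $L_2\otimes L_2$ survives from $L^{\otimes 2}$.

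Next I would read off $L_2$, $L_3$ and $L_4$ from the explicit identities \eqref{eq:4th term} derived in the proof of Corollary \ref{cor:winding 2}, substituting $\eta=1_{\mathrm{Int}\Phi(\gamma)}$ so that every moment $\int x^{n}y^{k}\eta\,\mathrm{d}x\mathrm{d}y$ is replaced by $\int x^{n}y^{k}1_{\mathrm{Int}\Phi(\gamma)}\,\mathrm{d}x\mathrm{d}y$. The remaining work at this stage is a finite Lie-algebra computation: using bilinearity of the bracket and the Jacobi identity one checks that
\[ [\mathbf{x}_1,[\mathbf{e}_1,\mathbf{e}_2]]=x_1[\mathbf{e}_1,[\mathbf{e}_1,\mathbf{e}_2]]-y_1[[\mathbf{e}_1,\mathbf{e}_2],\mathbf{e}_2] \]
and
\[ \tfrac12[\mathbf{x}_1,[\mathbf{x}_1,[\mathbf{e}_1,\mathbf{e}_2]]]=\tfrac12 x_1^2[\mathbf{e}_1,[\mathbf{e}_1,[\mathbf{e}_1,\mathbf{e}_2]]]-x_1y_1[\mathbf{e}_1,[[\mathbf{e}_1,\mathbf{e}_2],\mathbf{e}_2]]+\tfrac12 y_1^2[[[\mathbf{e}_1,\mathbf{e}_2],\mathbf{e}_2],\mathbf{e}_2], \]
where $\mathbf{x}_1=x_1\mathbf{e}_1+y_1\mathbf{e}_2$; these are exactly the combinations of Lyndon elements produced by \eqref{eq:4th term}. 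Consequently $L_2+L_3+L_4$ equals the integral over $(x_1,y_1)$ of $\big([\mathbf{e}_1,\mathbf{e}_2]+[\mathbf{x}_1,[\mathbf{e}_1,\mathbf{e}_2]]+\tfrac12[\mathbf{x}_1,[\mathbf{x}_1,[\mathbf{e}_1,\mathbf{e}_2]]]\big)1_{\mathrm{Int}\Phi(\gamma)}(x_1,y_1)$, which is linear in the winding number.

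For the last term I would use that $L_2=\big(\int 1_{\mathrm{Int}\Phi(\gamma)}\,\mathrm{d}x\mathrm{d}y\big)[\mathbf{e}_1,\mathbf{e}_2]$ is the enclosed area times $[\mathbf{e}_1,\mathbf{e}_2]$, so that $\tfrac12 L_2\otimes L_2$ rewrites as $\tfrac12\big(\int_{\mathbb{R}^4}1_{\mathrm{Int}\Phi(\gamma)}(x_1,y_1)1_{\mathrm{Int}\Phi(\gamma)}(x_2,y_2)\,\mathrm{d}x_1\mathrm{d}y_1\mathrm{d}x_2\mathrm{d}y_2\big)[\mathbf{e}_1,\mathbf{e}_2]\otimes[\mathbf{e}_1,\mathbf{e}_2]$. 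Finally I take $\mathbb{E}$ of each tensor coefficient and interchange expectation with the spatial integrations; by Definition \ref{npoint-1}, $\mathbb{E}[1_{\mathrm{Int}\Phi(\gamma)}(x,y)]=\Gamma_1(x,y)$ and $\mathbb{E}[1_{\mathrm{Int}\Phi(\gamma)}(x_1,y_1)1_{\mathrm{Int}\Phi(\gamma)}(x_2,y_2)]=\Gamma_2((x_1,y_1),(x_2,y_2))$, which assembles the claimed formula \eqref{eq:4th levl-1-1-1}.

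The only analytic point is the interchange of expectation and the spatial integrals, and here it is essentially free: because $\tfrac12(1+\mathbb{D})$ is bounded and $\Phi(\gamma)$ is a simple closed curve, the winding number is the bounded indicator $1_{\mathrm{Int}\Phi(\gamma)}$, supported in $\tfrac12(1+\mathbb{D})$, so every integrand is dominated by $1$ on a set of finite measure and Fubini--Tonelli applies directly, without recourse to the integrability estimate of Lemma \ref{lem:Winding integrability}. The genuinely substantive step is therefore organisational rather than analytic: truncating $\exp(\log S)$ so that exactly one quadratic term $L_2\otimes L_2$ survives. This is precisely the mechanism that brings in the two-point function $\Gamma_2$, whereas every Lie-bracket term remains linear in the winding number and hence contributes only through $\Gamma_1$; I expect the bookkeeping in the Lie-algebra reduction \eqref{eq:4th term} to be the main place where care is required.
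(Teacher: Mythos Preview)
Your proposal is correct and follows essentially the same route as the paper's own proof: restrict to the full-measure set of simple curves, use that $\Phi(\gamma)$ is closed so the degree-one part of the log signature vanishes, read off $L_2,L_3,L_4$ from \eqref{eq:4th term} with $\eta=1_{\mathrm{Int}\Phi(\gamma)}$, exponentiate and truncate (yielding exactly the extra $\tfrac12 L_2\otimes L_2$ term), and finally take expectation to replace the indicator moments by $\Gamma_1$ and $\Gamma_2$. Your version is in fact slightly more explicit than the paper's, which compresses the Lie-algebra repackaging and the Fubini justification into the phrases ``a simple computation'' and ``taking an expectation''.
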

\begin{proof}
Let $A$ be the set defined in the proof of Theorem \ref{thm:SLEthm}.

Let $\gamma\in A$. As $\Phi\left(\gamma\right)$ is closed, $\mathbf{e}_{1}^{*}\left(\log S\left(\Phi\left(\gamma\right)_{0,1}\right)\right)=\mathbf{e}_{2}^{*}\left(\log S\left(\Phi\left(\gamma\right)_{0,1}\right)\right)=0$.
Hence by (\ref{eq:4th term}) and a simple computation, $\pi_{4}\left(\log S\left(\Phi\left(\gamma\right)_{0,1}\right)\right)$
can be written as 
\[
\int_{\mathbb{R}^{2}}\left(\left[\mathbf{e}_{1},\mathbf{e}_{2}\right]+\left[x\mathbf{e}_{1}+y\mathbf{e}_{2},\left[\mathbf{e}_{1},\mathbf{e}_{2}\right]\right]+\frac{1}{2}\left[x\mathbf{e}_{1}+y\mathbf{e}_{2},\left[x\mathbf{e}_{1}+y\mathbf{e}_{2},\left[\mathbf{e}_{1},\mathbf{e}_{2}\right]\right]\right]\right)1_{\mbox{Int}\Phi\left(\gamma\right)}\left(x,y\right)\mathrm{d}x\mathrm{d}y
\]

By taking the exponential and writing $x\mathbf{e}_{1}+y\mathbf{e}_{2}$
as $\mathbf{x}$, 
\begin{align}
\pi_{4}\left(S\left(\Phi\left(\gamma\right)_{0,1}\right)\right) & =1+\int_{\mathbb{R}^{2}}\left(\left[\mathbf{e}_{1},\mathbf{e}_{2}\right]+\left[\mathbf{x},\left[\mathbf{e}_{1},\mathbf{e}_{2}\right]\right]+\frac{1}{2}\left[\mathbf{x},\left[\mathbf{x},\left[\mathbf{e}_{1},\mathbf{e}_{2}\right]\right]\right]\right)1_{\mbox{Int}\Phi\left(\gamma\right)}\left(x,y\right)\mathrm{d}x\mathrm{d}y\nonumber \\
 & +\frac{1}{2}\int_{\mathbb{R}^{2}}\left[\mathbf{e}_{1},\mathbf{e}_{2}\right]1_{\mbox{Int}\Phi\left(\gamma\right)}\left(x,y\right)\mathrm{d}x\mathrm{d}y\otimes\int_{\mathbb{R}^{2}}\left[\mathbf{e}_{1},\mathbf{e}_{2}\right]1_{\mbox{Int}\Phi\left(\gamma\right)}\left(x,y\right)\mathrm{d}x\mathrm{d}y.\label{eq:4th term of Jordan curves}
\end{align}

Note that 
\begin{align*}
 & \int_{\mathbb{R}^{2}}\left[\mathbf{e}_{1},\mathbf{e}_{2}\right]1_{\mbox{Int}\Phi\left(\gamma\right)}\left(x,y\right)\mathrm{d}x\mathrm{d}y\otimes\int_{\mathbb{R}^{2}}\left[\mathbf{e}_{1},\mathbf{e}_{2}\right]1_{\mbox{Int}\Phi\left(\gamma\right)}\left(x,y\right)\mathrm{d}x\mathrm{d}y\\
= & \int_{\mathbb{R}^{4}}1_{\mbox{Int}\Phi\left(\gamma\right)\times\mbox{Int}\Phi\left(\gamma\right)}\left(x_{1},y_{1},x_{2},y_{2}\right)\mathrm{d}x_{1}\mathrm{d}y_{1}\mathrm{d}x_{2}\mathrm{d}y_{2}\left[\mathbf{e}_{1},\mathbf{e}_{2}\right]\otimes\left[\mathbf{e}_{1},\mathbf{e}_{2}\right].
\end{align*}

The proof is completed by taking an expectation. 
\end{proof}
Before we calculate the fourth term of the expected signature of SLE
curves, we need the expected signature of a semi-circle with radius
$\frac{1}{2}$. 
\begin{lem}
\label{lem:semi-circle sig}Let $\phi:\left[0,\pi\right]\rightarrow\mathbb{R}^{2}$
be defined by 
\[
\phi\left(t\right):=\begin{cases}
\frac{1}{2}\left(-\cos t,\sin t\right), & t\in\left[0,\pi\right]\\
\frac{1}{2}+\pi-t & t\in\left[\pi,1+\pi\right]
\end{cases}
\]
The first four terms in the signature of $\phi$ are 
\begin{eqnarray}
 &  & \begin{array}{c}
1-\frac{\pi}{8}\left[\mathbf{e}_{1},\mathbf{e}_{2}\right]-\frac{1}{12}\left[\mathbf{e}_{2},\left[\mathbf{e}_{1},\mathbf{e}_{2}\right]\right]-\frac{\pi}{16}\left[\mathbf{e}_{1}.\left[\mathbf{e}_{1},\mathbf{e}_{2}\right]\right]\\
-\frac{5\pi}{256}\left[\mathbf{e}_{1},\left[\mathbf{e}_{1},\left[\mathbf{e}_{1},\mathbf{e}_{2}\right]\right]\right]-\frac{\pi}{256}\left[\left[\left[\mathbf{e}_{1},\mathbf{e}_{2}\right],\mathbf{e}_{2}\right],\mathbf{e}_{2}\right]+\frac{\pi^{2}}{128}\left[\mathbf{e}_{1},\mathbf{e}_{2}\right]\otimes\left[\mathbf{e}_{1},\mathbf{e}_{2}\right]\\
-\frac{1}{24}\left[\mathbf{e}_{1},\left[\mathbf{e}_{2},\left[\mathbf{e}_{1},\mathbf{e}_{2}\right]\right]\right].
\end{array}\label{eq:semi-circle sig}
\end{eqnarray}
\end{lem}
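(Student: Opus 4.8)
The plan is to treat $\phi$ as what it is: a \emph{closed} curve of bounded variation whose two pieces join up to trace the boundary of the upper half-disc of radius $\frac12$, running along the semicircle from $(-\frac12,0)$ to $(\frac12,0)$ and back along the diameter. Being closed (and of finite $1$-variation), its full level-$4$ log signature is accessible through the winding-number formulas behind Corollary \ref{cor:winding 2}, and the signature itself then follows by a single exponentiation. First I would fix the orientation: a one-line computation of the signed area $\frac12\oint_{\phi}(x\,\mathrm{d}y-y\,\mathrm{d}x)=-\frac{\pi}{8}$ (only the semicircular arc contributes) shows $\phi$ is traversed clockwise, so by Theorem \ref{thm:Jordan winding} we have $\eta(\phi,\cdot)=-1$ on the interior of the half-disc and $0$ outside.

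The key structural input is that every Lyndon word of degree at most $4$ is of the form $\mathbf{e}_1^{\otimes n}\otimes\mathbf{e}_2^{\otimes k}$ (Example \ref{Lyndon words level 4}), so by Theorem \ref{thm:winding 1} \emph{every} coefficient of $\pi_4(\log S(\phi)_{0,1})$ in the Lyndon basis is a winding-number moment; these are precisely the scalars recorded in (\ref{eq:4th term}). Since $\phi_0=(-\frac12,0)$, the region attached to $\eta(\phi-\phi_0,\cdot)$ is the upper half-disc of radius $\frac12$ centred at $(\frac12,0)$, and the moments reduce to elementary polar integrals over this off-centre half-disc: the area is $\frac{\pi}{8}$, the first moments are $\int x=\frac{\pi}{16}$ and $\int y=\frac1{12}$, and the second moments are $\int x^2=\frac{5\pi}{128}$, $\int y^2=\frac{\pi}{128}$, $\int xy=\frac1{24}$. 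Inserting these, together with the orientation sign $-1$ and the factors $\frac{(-1)^k}{n!k!}$, into (\ref{eq:4th term}) produces the coefficients of the six Lyndon elements of degrees $2,3,4$ listed in (\ref{eq: Hall basis 4}).

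The only subtlety at this stage is reconciling the Lyndon elements delivered by (\ref{eq:4th term}) with the bracket expressions displayed in (\ref{eq:semi-circle sig}): using $[\mathbf{e}_2,[\mathbf{e}_1,\mathbf{e}_2]]=-[[\mathbf{e}_1,\mathbf{e}_2],\mathbf{e}_2]$ and $[\mathbf{e}_1,[\mathbf{e}_2,[\mathbf{e}_1,\mathbf{e}_2]]]=-[\mathbf{e}_1,[[\mathbf{e}_1,\mathbf{e}_2],\mathbf{e}_2]]$ accounts for the sign of the $-\frac1{12}$ and $-\frac1{24}$ terms. Writing the resulting Lie series as $L=L_2+L_3+L_4$ (there is no $L_1$ since $\phi$ is closed), the signature up to level $4$ is $1+L_2+L_3+(L_4+\frac12 L_2^{\otimes2})$, because $L_2^{\otimes2}$ is the only product of lower-degree homogeneous pieces of $L$ that lands in degree $4$. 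With $L_2=-\frac{\pi}{8}[\mathbf{e}_1,\mathbf{e}_2]$ this cross term is $\frac12 L_2^{\otimes2}=\frac{\pi^2}{128}[\mathbf{e}_1,\mathbf{e}_2]\otimes[\mathbf{e}_1,\mathbf{e}_2]$, exactly the unique non-Lie term in (\ref{eq:semi-circle sig}); every other displayed term is a homogeneous Lie component of $L$.

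I expect the difficulty to be entirely computational rather than conceptual: fixing the orientation sign once and propagating it consistently, correctly translating the $\phi-\phi_0$ shift into moments of the \emph{off-centre} half-disc, and evaluating the three second moments without slips. As an independent safeguard against sign errors, one can recompute $S(\phi)_{0,1}=S(C)_{0,1}\otimes\exp(-\mathbf{e}_1)$ directly by Chen's identity, where $C$ is the semicircular arc (its iterated integrals up to level $4$ being explicit trigonometric integrals) and $\exp(-\mathbf{e}_1)$ is the signature of the diameter; agreement of the two routes confirms (\ref{eq:semi-circle sig}).
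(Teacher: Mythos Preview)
Your proposal is correct and follows essentially the same route as the paper: recognise $\phi$ as a negatively oriented simple closed curve, express $\pi_4(\log S(\phi))$ via the winding-number moments of (\ref{eq:4th term}) over the (shifted) upper half-disc, and exponentiate to pick up the single non-Lie term $\tfrac12 L_2^{\otimes 2}$. The paper packages the same computation by invoking the pre-exponentiated formula (\ref{eq:4th term of Jordan curves}) with $\mathbf{x}=(x+\tfrac12)\mathbf{e}_1+y\mathbf{e}_2$ and then integrating in polar coordinates, which is exactly your moment calculation in compressed form.
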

\begin{proof}
By exactly the same computation required to obtain (\ref{eq:4th term of Jordan curves}),we
have, by denoting $\mathbf{x}=\left(x+\frac{1}{2}\right)\mathbf{e}_{1}+y\mathbf{e}_{2}$,
\begin{eqnarray}
\pi_{4}\left(S\left(\phi\star\psi\right)_{0,1}\right) & = & 1-\int_{\mathbb{R}^{2}}\left(\left[\mathbf{e}_{1},\mathbf{e}_{2}\right]+\left[\mathbf{x},\left[\mathbf{e}_{1},\mathbf{e}_{2}\right]\right]+\frac{1}{2}\left[\mathbf{x},\left[\mathbf{x},\left[\mathbf{e}_{1},\mathbf{e}_{2}\right]\right]\right]\right)1_{\mbox{Int}\phi\star\psi}\left(x,y\right)\mathrm{d}x\mathrm{d}y\nonumber \\
 &  & +\frac{1}{2}\int_{\mathbb{R}^{4}}1_{\mbox{Int}\phi\star\psi\times\mbox{Int}\phi\star\psi}\left(x_{1},y_{1},x_{2},y_{2}\right)\mathrm{d}x_{1}\mathrm{d}y_{1}\mathrm{d}x_{2}\mathrm{d}y_{2}\left[\mathbf{e}_{1},\mathbf{e}_{2}\right]\otimes\left[\mathbf{e}_{1},\mathbf{e}_{2}\right]\label{eq:semi-circle 4th term}
\end{eqnarray}
where the negative sign is due to that $\phi\star\psi$ has negative
orientation. 

By changing to polar coordinate and a simple integration we obtain
(\ref{eq:semi-circle sig}). 
\end{proof}

We are now in a position to calculate the first four terms of the
expected signature of SLE$_{\frac{8}{3}}$ curve.

\begin{proof}[Proof of Theorem 6]Let $\Phi$ and $\phi$ be defined
as in Lemmas \ref{prop:sig of closed loop} and \ref{lem:semi-circle sig}.
Then the expected signature of SLE$_{\frac{8}{3}}$ curve in $\frac{1}{2}\left(1+\mathbb{D}\right)$
is 
\begin{equation}
\mathbb{E}_{\frac{8}{3},\frac{1}{2}\left(1+\mathbb{D}\right)}^{0,1}\left(S\left(\cdot\right)_{0,1}\right)=\mathbb{E}_{\frac{8}{3},\frac{1}{2}\left(1+\mathbb{D}\right)}^{0,1}\left(S\left(\Phi\left(\gamma\right)\right)_{0,1}\right)\otimes S\left(\phi\right)_{0,1}\otimes e^{\mathbf{e}_{1}}.\label{eq:big product}
\end{equation}

By the invariance of the distribution of SLE curve under conjugation,
$\mathbf{e}_{i_{1}}^{*}\otimes\mathbf{e}_{i_{2}}^{*}\otimes\mathbf{e}_{i_{3}}^{*}\otimes\mathbf{e}_{i_{4}}^{*}\left(\mathbb{E}_{\frac{8}{3},\frac{1}{2}\left(1+\mathbb{D}\right)}^{0,1}\left(S\left(\gamma\right)_{0,1}\right)\right)=0$
if $\left(i_{1},i_{2},i_{3},i_{4}\right)$ contains an odd number
of $2$s. Therefore, we only need to look at terms with an even number
of $2$s, which can be calculated by substituting (\ref{eq:4th levl-1-1-1})
and (\ref{eq:semi-circle sig}) into (\ref{eq:big product}), to obtain
\begin{eqnarray}
 &  & \left(\int_{\mathbb{R}^{2}}x_{1}y_{1}\Gamma_{1}\left(x_{1},y_{1}\right)\mathrm{d}x_{1}\mathrm{d}y_{1}-\frac{1}{24}\right)\left[\mathbf{e}_{1},\left[\mathbf{e}_{2},\left[\mathbf{e}_{1},\mathbf{e}_{2}\right]\right]\right]\nonumber \\
 &  & +\left(\int_{\mathbb{R}^{2}}y_{1}\Gamma_{1}\left(x_{1},y_{1}\right)\mathrm{d}x_{1}\mathrm{d}y_{1}-\frac{1}{12}\right)\left[\mathbf{e}_{2},\left[\mathbf{e}_{1},\mathbf{e}_{2}\right]\right]\otimes\mathbf{e}_{1}+\frac{\mathbf{e}_{1}^{\otimes4}}{4!}\nonumber \\
 &  & +\frac{1}{2}\int_{\mathbb{R}^{4}}\Gamma_{2}\left(\left(x_{1},y_{1}\right),\left(x_{2},y_{2}\right)\right)\mathrm{d}x_{1}\mathrm{d}y_{1}\mathrm{d}x_{2}\mathrm{d}y_{2}\left[\mathbf{e}_{1},\mathbf{e}_{2}\right]\otimes\left[\mathbf{e}_{1},\mathbf{e}_{2}\right]\nonumber \\
 &  & -\frac{\pi}{8}\int_{\mathbb{R}^{2}}\Gamma_{1}\left(x_{1},y_{1}\right)\mathrm{d}x_{1}\mathrm{d}y_{1}\left[\mathbf{e}_{1},\mathbf{e}_{2}\right]\otimes\left[\mathbf{e}_{1},\mathbf{e}_{2}\right]+\frac{\pi^{2}}{128}\left[\mathbf{e}_{1},\mathbf{e}_{2}\right]\otimes\left[\mathbf{e}_{1},\mathbf{e}_{2}\right].\label{eq:triple tensor}
\end{eqnarray}

The quantity 
\[
\int_{\mathbb{D}}y_{1}\Gamma_{1}\left(x_{1},y_{1}\right)\mathrm{d}x_{1}\mathrm{d}y_{1}
\]
has been calculated in \cite{Wer12} as 
\begin{equation}
\frac{1}{8}\left(\frac{3}{2}-\mathcal{K}\right).\label{eq:first moment}
\end{equation}

We may compute 
\[
\int_{\mathbb{R}^{2}}\Gamma_{1}\left(x_{1},y_{1}\right)\mathrm{d}x_{1}\mathrm{d}y_{1}\;\mbox{and \;}\int_{\mathbb{D}}x_{1}y_{1}\Gamma_{1}\left(x_{1},y_{1}\right)\mathrm{d}x_{1}\mathrm{d}y_{1}
\]
in a similar spirit to \cite{Wer12} to obtain 
\begin{eqnarray}
\int_{\mathbb{R}^{2}}\Gamma_{1}\left(x_{1},y_{1}\right)\mathrm{d}x_{1}\mathrm{d}y_{1} & = & \frac{\pi}{8}\label{eq:zero moment}\\
\int_{\mathbb{R}^{2}}x_{1}y_{1}\Gamma_{1}\left(x_{1},y_{1}\right)\mathrm{d}x_{1}\mathrm{d}y_{1} & = & \frac{3-2\mathcal{K}}{32}.\label{eq:second moment}
\end{eqnarray}

Finally for the term 
\begin{equation}
A:=\int_{\mathbb{R}^{4}}\Gamma_{2}\left(\left(x_{1},y_{1}\right),\left(x_{2},y_{2}\right)\right)\mathrm{d}x_{1}\mathrm{d}y_{1}\mathrm{d}x_{2}\mathrm{d}y_{2}\label{eq:A}
\end{equation}
we note that as $f\left(w\right):=\frac{w}{w+i}$ maps $\mathbb{H}$
to $\frac{1}{2}\left(1+\mathbb{D}\right)$ and $\infty$ to $1$ and
$0$ to $0$, we may obtain by a change of variable 

\begin{eqnarray*}
A & = & \int_{\mathbb{H}\times\mathbb{H}}\Gamma_{2}^{\mathbb{H}}\left(x_{1},y_{1},x_{2},y_{2}\right)\left|f^{\prime}\left(x_{1}+y_{1}i\right)\right|^{2}\left|f^{\prime}\left(x_{2}+y_{2}i\right)\right|^{2}\mathrm{d}x_{1}\mathrm{d}y_{1}\mathrm{d}x_{2}\mathrm{d}y_{2}\\
 & = & \int_{\mathbb{H}\times\mathbb{H}}\frac{\Gamma_{2}^{\mathbb{H}}\left(x_{1},y_{1},x_{2},y_{2}\right)}{\left(x_{1}^{2}+\left(y_{1}+1\right)^{2}\right)^{2}\left(x_{2}^{2}+\left(y_{2}+1\right)^{2}\right)^{2}}\mathrm{d}x_{1}\mathrm{d}y_{1}\mathrm{d}x_{2}\mathrm{d}y_{2}.
\end{eqnarray*}
where $\Gamma_{2}^{\mathbb{H}}$ is the two point function calculated
in \cite{Bel-Vik11}. By substituting the expression for $\Gamma_{2}^{\mathbb{H}}$
and by a change of variable to polar coordinate, we would obtain (\ref{eq:quad integral}).
The result follows by substituting (\ref{eq:zero moment}), (\ref{eq:first moment})
, (\ref{eq:second moment}) and (\ref{eq:A}) into (\ref{eq:triple tensor}).
\end{proof}

\end{document}